\DeclareFontFamily{OT1}{rsfs}{}
\DeclareFontShape{OT1}{rsfs}{n}{it}{<-> rsfs10}{}
\DeclareMathAlphabet{\mathscr}{OT1}{rsfs}{n}{it}
\newtheorem{theorem}{Theorem}[section]
\newtheorem{prop}[theorem]{Proposition}
\newtheorem{lemma}[theorem]{Lemma}
\newtheorem{corol}[theorem]{Corollary}
\newtheorem{claim}[theorem]{Claim}
\newtheorem{conj}{Conjecture}
\theoremstyle{definition} }
\theoremstyle{remark} \newtheorem{remark}[theorem]{Remark}
\newtheorem{example}[theorem]{Example}}
\numberwithin{equation}{section}
\newcommand{\Abb}{{\mathbb{A}}}
\newcommand{\Lbb}{{\mathbb{L}}}
\newcommand{\Qbb}{{\mathbb{Q}}}
\newcommand{\hM}{{\widehat{M}}}
\newcommand{\oM}{{\overline{M}}}
\newcommand{\oP}{{\overline{P}}}
\newcommand{\hchi}{{\widehat{\chi}}}
\newcommand{\one}{1\hskip-3.5pt1}
\newcommand{\saf}{\,}
\newcommand{\caM}{{\mathcal{M}}}
\newcommand{\ocaM}{{\overline\caM}}
\newcommand{\fs}{{\mathfrak s}}
\newcommand{\fS}{{\mathfrak S}}
\DeclareMathOperator{\tr}{tr}
\DeclareMathOperator{\rk}{rk}
\DeclareMathOperator{\St}{St}
\DeclareMathOperator{\Sh}{Sh}
\newcommand{\qede}{\hfill $\lrcorner$}
\title{Explicit formulas for the Grothendieck class of $\ocaM_{0,n}$}
\author{Paolo Aluffi}
\author{Matilde Marcolli}
\author{Eduardo Nascimento}
\address{
Mathematics Department, 
Florida State University,
Tallahassee FL 32306, U.S.A.
}
\address{
Department of Mathematics, 
California Institute of Technology,
Pasadena CA 91105, U.S.A.
}
\email{aluffi@math.fsu.edu}
\email{matilde@caltech.edu}
\email{ehnascim@caltech.edu}
\begin{document}

\begin{abstract}
We obtain explicit expressions for the class in the Grothendieck group of varieties
of the moduli space $\ocaM_{0,n}$ of genus~$0$ stable curves with $n$ marked points. 
This information is equivalent to the Poincar\'e polynomial; it implies explicit expressions 
for the Betti numbers of the moduli space in terms of Stirling numbers or, alternatively, 
Bernoulli numbers.

The expressions are obtained by solving a differential equation characterizing the generating
function for the Grothendieck class as shown in work of Yuri Manin from the 
1990s. This differential equation is equivalent to S.~Keel's recursion for the Betti numbers
of $\ocaM_{0,n}$. Our proof reduces the solution to two combinatorial identities which follow 
from applications of Lagrange series.

We also study generating functions for the individual Betti numbers. In previous work it had
been shown that these functions are determined by a set of polynomials $p^{(k)}_m(z)$,
$k\ge m$, with positive rational coefficients, which are conjecturally log-concave. We 
verify this conjecture for many infinite families of polynomials $p^{(k)}_m(z)$, corresponding 
to the generating functions for the $2k$-Betti numbers of $\ocaM_{0,n}$ for all $k\le 100$. 
Further, studying the polynomials $p^{(k)}_m(z)$ allows us to prove that the generating 
function for the Grothendieck class of $\ocaM_{0,n}$ may be written as a series of rational 
functions in $\Lbb$ and the principal branch of the {\em Lambert W-function.\/}

We include an interpretation of the main result in terms of Stirling matrices and a discussion
of the Euler characteristic of $\ocaM_{0,n}$.
\end{abstract}

\maketitle

{\let\thefootnote\relax\footnotetext{2024/07/26}}


\section{Introduction}\label{sec:intro}

Let $\ocaM_{0,n}$ be the moduli space of stable $n$-pointed curves of genus $0$, 
$n\ge 3$, and denote by $[\ocaM_{0,n}]$ its class in the Grothendieck ring of varieties. 
This class is a polynomial in the Lefschetz-Tate class $\Lbb=[\Abb^1]$, and in fact
\[
[\ocaM_{0,n}] = \sum_{k=0}^{n-3} \rk H^{2k} (\ocaM_{0,n})\, \Lbb^k\saf,
\]
see \cite[Remark~3.2.2]{MR3701904}. Thus, this class is a manifestation of the Poincar\'e
polynomial of $\ocaM_{0,n}$. The following surprisingly elegant formula is, to our knowledge,
new.

\begin{theorem}\label{thm:M0nbarGC}
\begin{equation}\label{eq:M0nbarGC}
[\ocaM_{0,n}]=(1-\Lbb)^{n-1}
\sum_{k\ge 0}\sum_{j\ge 0}
s(k+n-1,k+n-1-j)\, S(k+n-1-j,k+1)\, \Lbb^{k+j}\saf.
\end{equation}
\end{theorem}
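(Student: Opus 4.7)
The plan is to verify that \eqref{eq:M0nbarGC} yields a solution to the differential equation for the generating function
\[
F(t,\Lbb) \;=\; \sum_{n\ge 3}\,[\ocaM_{0,n}]\,\frac{t^{\,n-1}}{(n-1)!}
\]
derived by Manin in the 1990s (an equivalent form of Keel's recursion). Concretely, I would substitute the right-hand side of \eqref{eq:M0nbarGC} for $[\ocaM_{0,n}]$ in $F$, simplify to a closed-form expression in $t$ and $\Lbb$, and verify that this expression satisfies Manin's ODE together with the correct low-order initial data.

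The natural way to convert the double Stirling sum into a closed form is via the exponential generating functions
\[
\sum_{N\ge 0} s(N,k)\,\frac{x^N}{N!} \;=\; \frac{(\log(1+x))^k}{k!}\,,\qquad
\sum_{N\ge 0} S(N,k)\,\frac{x^N}{N!} \;=\; \frac{(e^x-1)^k}{k!}\,,
\]
whose compositional-inverse relationship is precisely the setting for Lagrange inversion. Reorganizing \eqref{eq:M0nbarGC} by the common index $\ell := k+n-1-j$ displays the inner sum as an $\Lbb$-weighted ``diagonal'' of the product of the two Stirling triangles; combined with the prefactor $(1-\Lbb)^{n-1}$ and summation over $n$, this should repackage $F(t,\Lbb)$ as an expression in $t$ and the compositional inverse of an explicit elementary function of $\Lbb\,t$ — consistent with the Lambert $W$ description mentioned in the abstract.

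The crux of the argument is then two coefficient identities of Lagrange type — one extracting $[t^{n-1}]$ from a derivative of the closed form and the other from a product of the closed form with a simple factor — which together verify that Manin's ODE holds term by term. Each identity follows from a single application of the Lagrange inversion theorem to the implicit equation defining the relevant compositional inverse. The main obstacle I anticipate is the combinatorial bookkeeping required to collapse the triple sum (outer $n$, inner $k$ and $j$), together with the binomial expansion of $(1-\Lbb)^{n-1}$, onto the appropriate Lagrange-inversion setup so that the two target identities emerge in a clean form; once both are established, the proof is complete after a direct check of the first few coefficients of $F$ to pin down the initial conditions of the ODE.
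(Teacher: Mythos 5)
Your general strategy --- verify Manin's ODE, use Lagrange inversion --- is the one the paper uses, but your concrete plan diverges in a way that exposes a genuine gap. The paper does \emph{not} attempt to collapse the triple sum \eqref{eq:M0nbarGC} into a closed form and then check the ODE. Instead it first exhibits an explicit closed-form candidate $\oM$ for the generating function $\hM$ (Theorem~\ref{thm:main}), verifies that $\oM$ satisfies the ODE and the initial condition via two binomial identities proved by Lagrange inversion (Lemmas~\ref{lem:binid1} and~\ref{lem:binid2}), and only \emph{then} derives \eqref{eq:M0nbarGC} from the second expression of Theorem~\ref{thm:main} by taking the $(n-1)$-st $z$-derivative at $z=0$ and applying the \emph{defining} polynomial relations of Stirling numbers (the identity $\prod_{j=0}^{N-1}(1-jy) = \sum_j s(N, N-j)\, y^j$ and the alternating-sum formula~\eqref{eq:Stir2def}), not their exponential generating functions.

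Your proposed route --- contract the double Stirling sum via the EGF identities for $s$ and $S$ --- runs into a structural obstruction. After you substitute $N = k+n-1$ and sum over $n$ against $t^{n-1}/(n-1)!$, the weight on $s(N,\ell)$ becomes $t^{N-k}/(N-k)!$ for fixed $k$, not $t^N/N!$; the factorial does not match the EGF, so the inner series does not close up by composing $\log$ and $\exp$. Put differently, as formalized in Theorem~\ref{thm:Stma}, the Stirling numbers enter formula~\eqref{eq:M0nbarGC} as a shifted trace of the matrix product $\one_\Lbb\cdot \fs\cdot \one_{\Lbb^{-1}}\cdot \fS\cdot \one_\Lbb$, where the diagonal $\Lbb$-weights prevent $\fs\cdot\fS$ from contracting to the identity; this is not a compositional-inverse setup. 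The missing ingredient in your plan is an explicit closed-form ansatz for $\hM$: without a concrete target expression in hand, the ``simplify to a closed form'' step has no traction, and producing that target is precisely the content of Theorem~\ref{thm:main}, which you would need to discover (or take as given) before the rest of your argument can go through.
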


Here $s$, resp., $S$ denote Stirling numbers of the first, resp., second kind. The result implies
that the stated expression evaluates to a degree-$(n-3)$ polynomial in $\Lbb$ with positive 
coefficients, a fact that seems in itself nontrivial.
 
An explicit formula for the individual Betti numbers is an immediate consequence
of Theorem~\ref{thm:M0nbarGC}.

\begin{corol}\label{cor:bettisim}
For $n\ge 3$:
{\small
\[
\dim H^{2\ell}(\ocaM_{0,n})
=\sum_{j=0}^\ell \sum_{k=0}^{\ell-j} (-1)^{\ell-j-k}\binom{n-1}{\ell-j-k}
s(k+n-1,k+n-1-j)S(k+n-1-j,k+1)\saf.
\]}%
\end{corol}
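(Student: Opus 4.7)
The plan is to obtain the formula by extracting the coefficient of $\Lbb^\ell$ on both sides of \eqref{eq:M0nbarGC}. Since $[\ocaM_{0,n}] = \sum_{\ell=0}^{n-3}\rk H^{2\ell}(\ocaM_{0,n})\saf \Lbb^\ell$, matching coefficients will immediately express each Betti number as a finite sum of products of Stirling numbers, and the work reduces to bookkeeping.

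First, I would expand the prefactor in Theorem~\ref{thm:M0nbarGC} via the binomial theorem:
$$(1-\Lbb)^{n-1} = \sum_{i=0}^{n-1}\binom{n-1}{i}(-1)^i \Lbb^i.$$
Substituting this into \eqref{eq:M0nbarGC}, the contribution of a triple of indices $(i,j,k)$ to the right-hand side is an $\Lbb^{i+j+k}$ term, with coefficient $\binom{n-1}{i}(-1)^i\, s(k+n-1,k+n-1-j)\, S(k+n-1-j,k+1)$. Collecting all triples with $i+j+k=\ell$ and using $i=\ell-j-k$ yields
$$\dim H^{2\ell}(\ocaM_{0,n}) = \sum_{j,k\ge 0}(-1)^{\ell-j-k}\binom{n-1}{\ell-j-k}\, s(k+n-1,k+n-1-j)\, S(k+n-1-j,k+1).$$
The binomial coefficient vanishes unless $0\le \ell-j-k\le n-1$; in particular this forces $j+k\le \ell$, which recovers the ranges $0\le j\le \ell$, $0\le k\le \ell-j$ stated in the corollary. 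The upper bound $\ell-j-k\le n-1$ is automatic and need not be imposed explicitly.

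There is no genuine obstacle: once Theorem~\ref{thm:M0nbarGC} is available, the corollary is a one-step coefficient extraction. The one noteworthy feature is a built-in consistency check — for $\ell > n-3$ the derived expression must vanish, since $[\ocaM_{0,n}]$ is a polynomial of degree $n-3$. This is a nontrivial identity among Stirling numbers, but it is a free consequence of Theorem~\ref{thm:M0nbarGC} and requires no separate argument.
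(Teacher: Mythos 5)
Your proof is correct and takes precisely the route the paper intends (the paper merely states that the corollary ``follows immediately from Theorem~\ref{thm:M0nbarGC}, as the reader may verify''): expand $(1-\Lbb)^{n-1}$ binomially and extract the coefficient of $\Lbb^\ell$. The bookkeeping, including the observation that $\binom{n-1}{\ell-j-k}$ vanishes outside the stated summation range, is all accurate.
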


For instance,
\begin{align*}
\dim H^6 &(\ocaM_{0,5}) =
s(4, 1) S(1, 1) - 4 s(4, 2) S(2, 1) + 6 s(4, 3) S(3, 1) - 4 s(4, 4) S(4, 1) \\
&\qquad\qquad + s(5, 3) S(3, 2)  - 4 s(5, 4) S(4, 2) +  6 s(5, 5) S(5, 2) \\
&\qquad\qquad + s(6, 5) S(5, 3) - 4 s(6, 6) S(6, 3) + s(7, 7) S(7, 4) \\
&=(-6)\cdot 1 - 4\cdot 11\cdot 1 +6\cdot(-6)\cdot 1 - 4\cdot 1\cdot 1
+35\cdot 3 \\
&\qquad\qquad -4\cdot (-10)\cdot 7 +6\cdot 1\cdot 15 
+ (-15)\cdot 25 - 4\cdot 1\cdot 90 + 1\cdot 350 \\
&=0
\intertext{while}
\dim H^6 &(\ocaM_{0,10}) =
s(9,6) S(6, 1) - 9 s(9, 7) S(7, 1) + 36 s(9,8) S(8, 1) - 84 s(9,9) S(9, 1) \\
&\qquad\qquad + s(10, 8) S(8, 2)  - 9 s(10,9) S(9, 2) +  36 s(10,10) S(10, 2) \\
&\qquad\qquad + s(11, 10) S(10, 3) - 9 s(11, 11) S(11, 3) + s(12,12) S(12, 4) \\
&=(-4536)\cdot 1 - 9\cdot 546\cdot 1 +36\cdot(-36)\cdot 1 - 84\cdot 1\cdot 1
+870\cdot 127 \\ 
&\qquad\qquad -9\cdot (-45)\cdot 255 +36\cdot 1\cdot 511
+ (-55)\cdot 9330 - 9\cdot 1\cdot 28501 + 1\cdot 611501 \\
&=\oldstylenums{63{,}173}\saf.
\end{align*}

We prove Theorem~\ref{thm:M0nbarGC} by studying the generating function
\begin{equation}\label{eq:genfun}
\hM:=1+z+\sum_{n\ge 3} [\ocaM_{0,n}]\frac{z^{n-1}}{(n-1)!}\saf.
\end{equation}

Recursive formulas for the Betti numbers and the Poincar\'e polynomial of
$\ocaM_{0,n}$ have been known for three decades, since S.~Keel's seminal 
work~\cite{MR1034665}. E.~Getzler (\cite{MR1363058}) and Y.~Manin (\cite{MR1363064}) 
obtained explicit functional and differential equations satisfied by the generating 
function~\eqref{eq:genfun}, and this information was interpreted in terms of the Grothendieck
class of~$\ocaM_{0,n}$ in~\cite{MR3701904}. We obtain the following explicit expressions
for this generating function.

\begin{theorem}\label{thm:main}
\begin{align*}
\hM&=
\sum_{\ell\ge 0} \frac{(\ell+1)^\ell}{(\ell+1)!} \left(
\left((1-\Lbb)(1+(z+1)\Lbb)\right)^{\frac {1+\ell}\Lbb-\ell}\,
\prod_{j=0}^{\ell-1} \left(1-\frac{j\Lbb}{\ell+1}\right)\right) \Lbb^\ell \\
&=\sum_{\ell\ge 0}\left(\sum_{k\ge 0}\frac{(\ell+1)^{\ell+k}}{(\ell+1)! k!}
(z-\Lbb-z\Lbb)^k \prod_{j=0}^{\ell+k-1} \left(1-\frac{j\Lbb}{\ell+1}\right)\right) \Lbb^\ell\saf.
\end{align*}
\end{theorem}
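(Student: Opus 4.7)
The plan is to verify the first displayed expression for $\hM$ by checking that it satisfies the differential equation of Manin \cite{MR1363064}, reinterpreted in terms of the Grothendieck class in \cite{MR3701904}, together with the initial conditions $[\ocaM_{0,3}]=1$ and $[\ocaM_{0,4}]=1+\Lbb+\Lbb^2$; the second displayed expression then follows by a purely algebraic rewriting.

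I would begin by showing the two forms agree as formal power series. Set $u:=(1-\Lbb)(1+(z+1)\Lbb)$; a short computation yields $u-1=\Lbb(z-\Lbb-z\Lbb)$, so $u$ has the form $1+O(\Lbb)$ and the binomial expansion
\[
u^{(1+\ell)/\Lbb-\ell}=\sum_{k\ge 0}\binom{(1+\ell)/\Lbb-\ell}{k}(u-1)^k
\]
is a well-defined element of $\Qbb[[z,\Lbb]]$. Combined with
\[
\binom{(1+\ell)/\Lbb-\ell}{k}=\frac{(\ell+1)^k}{k!\,\Lbb^k}\prod_{j=0}^{k-1}\left(1-\frac{(\ell+j)\Lbb}{\ell+1}\right),
\]
the factor $\Lbb^k$ cancels against $(u-1)^k$, and the two finite products in $j$ appearing in the first form merge into the single product $\prod_{j=0}^{\ell+k-1}(1-j\Lbb/(\ell+1))$, recovering the second form.

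To verify the second form against Manin's equation, I would substitute the ansatz and collect terms by the outer index $\ell$ and by powers of $z-\Lbb-z\Lbb$. The coefficients $(\ell+1)^{\ell+k}/(\ell+1)!\,k!$, together with the falling-factorial-like products, are the signature of Lagrange inversion applied to the tree function $T(z)$ satisfying $T=ze^T$ (equivalently, the principal branch of the Lambert W-function alluded to in the abstract), via standard identities such as $[z^n]\,e^{\alpha T(z)}=\alpha(\alpha+n)^{n-1}/n!$. The aim is to repackage each $\ell$-stratum of the ansatz as a Lagrange coefficient of an auxiliary generating function, so that the entire verification of Manin's equation collapses to two combinatorial identities of Lagrange-series type, in line with the strategy announced in the abstract.

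The principal obstacle will be isolating those two Lagrange identities in a clean form. Manin's ODE is nonlinear in $\hM$ and its $z$-derivative, so distinct strata $\ell,\ell'$ of the ansatz interact when the equation is expanded, and the art lies in arranging the resulting cross-terms so they reduce to two recognizable Lagrange-series consequences. Once they are isolated, their proofs should be mechanical applications of Lagrange inversion; matching the initial conditions at $n=3,4$ is an inexpensive sanity check, amounting to verification of the low-order-in-$z$ content of the $\ell=0$ stratum (the only one that contributes through those orders).
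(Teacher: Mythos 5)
Your high-level architecture matches the paper's: verify that the first displayed expression satisfies Manin's differential equation~\eqref{eq:Mdifeq}, check the initial condition, and obtain the second form from the first by binomial expansion. The algebraic passage from the first to the second expression in your second paragraph is essentially identical to the paper's calculation and is correct. The ODE verification in the paper does indeed collapse, after separating the $z$-dependence and clearing a common exponential factor, to a single convolution identity (Claim~\ref{claim:fire}), which after a change of variable becomes the binomial identity~\eqref{eq:binid1}, proved by Lagrange series (Lemma~\ref{lem:binid1}).

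However, there is a genuine gap in how you locate the difficulty. You assert that matching the initial condition is ``an inexpensive sanity check, amounting to verification of the low-order-in-$z$ content of the $\ell=0$ stratum (the only one that contributes through those orders).'' This is false, and the error matters because it is precisely where the \emph{second} Lagrange identity lives. A first-order ODE needs only one initial condition, $\oM|_{z=0}=1$. Setting $z=0$ in the ansatz leaves an \emph{infinite} sum over $\ell$,
\[
\sum_{\ell\ge 0} \frac{(\ell+1)^\ell}{(\ell+1)!}\,(1-\Lbb^2)^{\frac{1+\ell}{\Lbb}-\ell}\,\prod_{j=0}^{\ell-1}\Bigl(1-\frac{j\Lbb}{\ell+1}\Bigr)\Lbb^\ell\saf,
\]
and each term, once expanded in $\Lbb$, contributes to arbitrarily high powers of $\Lbb$ (despite the overall factor $\Lbb^\ell$, the $(1-\Lbb^2)^{\cdots}$ factor is an infinite series in $\Lbb$). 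The cancellations that make this sum equal the constant $1$ are highly nontrivial and involve all strata simultaneously; the paper displays this explicitly before reducing the claim, via the substitution $\Lbb=1/m$, to the binomial identity~\eqref{eq:binid2} and Lemma~\ref{lem:binid2}. So the two combinatorial identities are split between the ODE and the initial condition, not both absorbed into the ODE as you propose, and it is a mistake to treat the initial condition as a formality. (You also misquote the low-order data: $[\ocaM_{0,4}]=1+\Lbb$, not $1+\Lbb+\Lbb^2$, and the equation being first order makes the second condition redundant in any case.)

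A smaller remark: in your second paragraph you invoke Lagrange inversion for the tree function $T=ze^T$ and the identity $[z^n]e^{\alpha T}=\alpha(\alpha+n)^{n-1}/n!$ as the ``signature'' behind the coefficients. The paper's two Lagrange-series applications are actually cast in terms of $f(x)=(x+1)^\alpha$ with $\varphi(x)=(x+1)^\beta$, producing the generalized Vandermonde/Gould-type identity~\eqref{eq:Lagse}, rather than going through $T$. The tree function does appear in the paper, but only in \S\ref{sec:Wm1} and \S\ref{sec:OtEcoM}, not in the proof of Theorem~\ref{thm:main}. This is not an error on your part, but it suggests your plan would force an extra reparametrization before the two identities ``collapse'' in the clean form you hope for.
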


We prove Theorem~\ref{thm:M0nbarGC} as a direct consequence of the second expression, 
and the second expression follows from the first. We prove that the first expression equals~$\hM$
by showing that it is the solution of the aforementioned differential equation characterizing~$\hM$.

The first expression may be used to obtain alternative formulas for $[\ocaM_{0,n}]$
and for the Betti numbers.

\begin{corol}\label{cor:PP}
For all $n\ge 3$:
\begin{equation}\label{eq:Gc}
[\ocaM_{0,n}]=\sum_{\ell\ge 0} 
\frac{(1-\Lbb^2)^{\frac{\ell+1}\Lbb-\ell}}{(1+\Lbb)^{n-1}} \,
\left(\prod_{j=0}^{\ell+n-2} (\ell+1-j\Lbb)\right)\,
\frac{\Lbb^\ell}{(\ell+1)!}\saf.
\end{equation}
\end{corol}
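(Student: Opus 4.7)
The plan is to extract $[\ocaM_{0,n}]$ directly from the first expression for $\hM$ in Theorem~\ref{thm:main} via
\[
[\ocaM_{0,n}] \;=\; (n-1)!\cdot [z^{n-1}]\,\hM,
\]
using the definition~\eqref{eq:genfun}. The key observation is that the $z$-dependence in the first expression of Theorem~\ref{thm:main} enters only through the base of the exponential, which factors nicely: writing
\[
(1-\Lbb)(1+(z+1)\Lbb) \;=\; (1-\Lbb^2)\left(1+\frac{\Lbb\,z}{1+\Lbb}\right),
\]
the $z$-dependent factor becomes $\bigl(1+\tfrac{\Lbb z}{1+\Lbb}\bigr)^{\alpha}$ with $\alpha:=\tfrac{\ell+1}{\Lbb}-\ell$, while $(1-\Lbb^2)^\alpha$ is pulled out as a constant with respect to $z$.

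Next I would apply the binomial series to extract the coefficient of $z^{n-1}$:
\[
[z^{n-1}]\left(1+\frac{\Lbb z}{1+\Lbb}\right)^{\alpha} \;=\; \binom{\alpha}{n-1}\frac{\Lbb^{n-1}}{(1+\Lbb)^{n-1}},
\]
so after multiplication by $(n-1)!$ the falling factorial identity $(n-1)!\binom{\alpha}{n-1}=\prod_{i=0}^{n-2}(\alpha-i)$ gives
\[
(n-1)!\binom{\alpha}{n-1} \;=\; \frac{1}{\Lbb^{n-1}}\prod_{i=0}^{n-2}\bigl((\ell+1)-(\ell+i)\Lbb\bigr) \;=\; \frac{1}{\Lbb^{n-1}}\prod_{j=\ell}^{\ell+n-2}\bigl((\ell+1)-j\Lbb\bigr),
\]
where I reindexed by $j=\ell+i$. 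This is exactly the continuation of the range of the product $\prod_{j=0}^{\ell-1}\bigl(1-\tfrac{j\Lbb}{\ell+1}\bigr)$ appearing in Theorem~\ref{thm:main}, once one rewrites that product as $(\ell+1)^{-\ell}\prod_{j=0}^{\ell-1}\bigl((\ell+1)-j\Lbb\bigr)$.

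Assembling the pieces, the two products merge into the single product $\prod_{j=0}^{\ell+n-2}\bigl((\ell+1)-j\Lbb\bigr)$, the $(\ell+1)^\ell$ in the prefactor cancels against $(\ell+1)^{-\ell}$, the powers of $\Lbb$ collapse to $\Lbb^\ell$ (the $\Lbb^{n-1}$ from the binomial coefficient cancelling the $\Lbb^{-(n-1)}$ coming from the falling factorial), and the factor $(1-\Lbb^2)^{\alpha}/(1+\Lbb)^{n-1}$ emerges as the closed-form prefactor in~\eqref{eq:Gc}. There is no conceptual obstacle here: the argument is entirely a bookkeeping of exponents and reindexing of products, with the one substantive step being the factorization of the base $(1-\Lbb)(1+(z+1)\Lbb)$ that isolates the $z$-dependence into a single binomial series. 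The main thing to verify carefully is that the sum of the exponents of $\Lbb$ — $\ell$ from the prefactor, $n-1$ from the binomial, and $-(n-1)$ from expanding $(n-1)!\binom{\alpha}{n-1}$ — indeed reduces to $\ell$ as stated.
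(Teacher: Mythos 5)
Your argument is correct and follows the paper's own proof in substance: both extract the coefficient of $z^{n-1}$ from the first expression in Theorem~\ref{thm:main} (the paper via the $(n-1)$-st derivative at $z=0$, you via the binomial series, which are the same calculation) and then merge the two products into $\prod_{j=0}^{\ell+n-2}\bigl((\ell+1)-j\Lbb\bigr)$. The only cosmetic difference is that you pre-factor the base as $(1-\Lbb^2)\bigl(1+\tfrac{\Lbb z}{1+\Lbb}\bigr)$, whereas the paper keeps $(1-\Lbb)^{\alpha}$ aside and differentiates $(1+(z+1)\Lbb)^{\alpha}$ directly, arriving at the intermediate form~\eqref{eq:M0ninp} before consolidating into~\eqref{eq:Gc}.
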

As in the case of~\eqref{eq:M0nbarGC}, the r.h.s.~of~\eqref{eq:Gc} is, despite appearances, 
a polynomial in $\Lbb$ with positive integer coefficients and degree $n-3$. In particular,
for every $n$, only finitely many summands of~\eqref{eq:Gc} need be computed in order 
to determine $[\ocaM_{0,n}]$. For example, 
here are the series expansions of the first few summands of~\eqref{eq:Gc}, for $n=6$:
\begin{alignat*}{14}
& \ell=0: \quad && 1 && -16 &&\Lbb && +\frac{231}2 &&\Lbb^2 && -\frac{3109}6 &&\Lbb^3 
&& +\frac{40549}{24} &&\Lbb^4 && -\frac{265223}{60} &&\Lbb^5 && 
+\frac{7126141}{720} &&\Lbb^6+\cdots\\
& \ell=1: \quad &&  && \phantom{\,+\,}32&&\Lbb && -464&&\Lbb^2 && +3256 &&\Lbb^3 && 
-\frac{45326}3 &&\Lbb^4 && +\frac{158768}3 &&\Lbb^5 && -\frac{2288308}{15} &&\Lbb^6 + \cdots\\
& \ell=2: \quad && && && && \phantom{\,+\,}\frac{729}2 &&\Lbb^2 && -\frac{10935}2 &&\Lbb^3 && 
+\frac {163215}4 &&\Lbb^4 && -\frac{410265}2 &&\Lbb^5 && +\frac{12663747}{16} &&\Lbb^6 
+ \cdots \\
& \ell=3: \quad && && && && && && \phantom{\,+\,}\frac{8192}3 &&\Lbb^3 && 
-\frac{131072}3 &&\Lbb^4 && 
+\frac {1057792}3&&\Lbb^5 && -\frac{17410048}9 &&\Lbb^6 + \cdots \\
& \ell=4: \quad && && && && && && && && \phantom{\,+\,}\frac{390625}{24} &&\Lbb^4 && 
-\frac {3359375}{12}&&\Lbb^5 && +\frac{117453125}{48} &&\Lbb^6 + \cdots \\
& \ell=5: \quad && && && && && && && && && && 
\phantom{\,+\,}\frac {419904}5&&\Lbb^5 && -\frac{7768224}5 &&\Lbb^6 + \cdots \\
& \ell=6: \quad && && && && && && && && && && && 
&& \phantom{\,+\,}\frac{282475249}{720} &&\Lbb^6 + \cdots \\
\intertext{and their sum:}
& && 1 && +16&&\Lbb && +\quad 16&&\Lbb^2 && +\quad &&\Lbb^3 && +\quad 0&&\Lbb^4 && 
+\quad\,\, 0&&\Lbb^5 && +\qquad 0&&\Lbb^6+\cdots\saf.
\end{alignat*}
The infinite sum~\eqref{eq:Gc} converges to $[\ocaM_{0,6}]=1+16\Lbb+16\Lbb^2+\Lbb^3$
in the evident sense. Concerning Betti numbers, we have the following alternative
to Corollary~\ref{cor:bettisim}.

\begin{corol}\label{cor:betticom}
For $i>0$, let
\[
C_{nk i}=\frac{(-1)^i (2k i+ni+k +n-1)+k -i}{i(i+1)}
-\frac 1{i(k +1)^i}\sum_{j=0}^{k +n-2} j^i\saf.
\]
Then for all $\ell\ge 0$ and all $n\ge 3$ we have
\begin{equation}\label{eq:betti}
\rk H^{2\ell}(\ocaM_{0,n})
=\sum_{k=0}^\ell \frac{(k+1)^{k+n-1}}{(k+1)!}
\sum_{m=0}^{\ell-k} \frac 1{m!} 
\sum_{i_1+\cdots+i_m=\ell-k} C_{nki_1}\cdots C_{nki_m}
\saf.
\end{equation}
\end{corol}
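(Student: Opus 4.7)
The plan is to start from Corollary~\ref{cor:PP} and show that each summand of~\eqref{eq:Gc}, after extracting an appropriate monomial factor, is the exponential of a series whose coefficients are precisely the numbers $C_{nki}$; the Betti number formula~\eqref{eq:betti} will then follow from the standard exponential formula.

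First I would rewrite the $k$-th summand of~\eqref{eq:Gc} (relabeling the summation index of Corollary~\ref{cor:PP} from $\ell$ to $k$). Using $\prod_{j=0}^{k+n-2}(k+1-j\Lbb) = (k+1)^{k+n-1}\prod_{j=1}^{k+n-2}(1 - j\Lbb/(k+1))$, this summand becomes
\[
T_k(\Lbb) \;=\; \frac{(k+1)^{k+n-1}}{(k+1)!}\,\Lbb^k\,F_k(\Lbb)\saf,\quad
F_k(\Lbb):=\frac{(1-\Lbb^2)^{(k+1)/\Lbb-k}}{(1+\Lbb)^{n-1}}\prod_{j=1}^{k+n-2}\!\left(1-\frac{j\Lbb}{k+1}\right)\saf.
\]
Since $\log(1-\Lbb^2)=O(\Lbb^2)$, the exponent $(k+1)/\Lbb-k$ leaves $(1-\Lbb^2)^{(k+1)/\Lbb-k}$ a formal power series with constant term $1$, so $F_k(\Lbb)=1+O(\Lbb)$ and $T_k(\Lbb)=O(\Lbb^k)$. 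Consequently only $k=0,\ldots,\ell$ contribute to $\rk H^{2\ell}(\ocaM_{0,n})=[\Lbb^\ell][\ocaM_{0,n}]$, matching the outer range of summation in~\eqref{eq:betti}.

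The core of the argument is the identity $\log F_k(\Lbb)=\sum_{i\ge 1}C_{nki}\Lbb^i$. I verify it by expanding $\log$ of each factor of $F_k$ via $\log(1-x)=-\sum_{i\ge 1}x^i/i$: the first factor contributes $-2(k+1)/(i+1)$ at $\Lbb^i$ for odd $i$ and $2k/i$ for even $i$ (the series $\log(1-\Lbb^2)$ contains only even powers of $\Lbb$, which upon multiplication by $(k+1)/\Lbb-k$ split into odd and even powers respectively); the second factor gives $(-1)^i(n-1)/i$; the third gives $-\frac{1}{i(k+1)^i}\sum_{j=1}^{k+n-2}j^i$, which matches the power sum in the statement once the harmless $j=0$ term is included. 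Combining these three contributions over the common denominator $i(i+1)$ and applying $\frac{1}{i(i+1)}=\frac 1i-\frac 1{i+1}$ collapses the two parity cases into the single closed form stated for $C_{nki}$.

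Once $\log F_k$ is identified, the exponential formula immediately yields $[\Lbb^{\ell-k}]F_k(\Lbb) = \sum_{m=0}^{\ell-k}\frac{1}{m!}\sum_{i_1+\cdots+i_m=\ell-k}C_{nki_1}\cdots C_{nki_m}$, and summing $[\Lbb^{\ell-k}]F_k(\Lbb)\cdot(k+1)^{k+n-1}/(k+1)!$ over $k=0,\ldots,\ell$ reproduces~\eqref{eq:betti}. The only substantive calculation is the logarithmic expansion of $F_k$; its sole subtlety is the parity case analysis for the first factor, which disappears as soon as the three contributions are assembled over the common denominator $i(i+1)$.
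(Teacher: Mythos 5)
Your proposal is correct and follows essentially the same route as the paper: extract the monomial factor $\frac{(k+1)^{k+n-1}}{(k+1)!}\Lbb^k$ from the $k$-th summand of the expression in Corollary~\ref{cor:PP}, expand $\log$ of the remaining factor $F_k$ term by term, identify the coefficients as $C_{nki}$, and invoke the exponential formula. The paper's only organizational difference is that it keeps $(1-\Lbb)^{(1+k)/\Lbb-k}$ and $(1+\Lbb)^{(1+k)/\Lbb-k-n+1}$ separate (so the constant terms $\mp(1+k)$ cancel rather than a parity split appearing), whereas you group them as $(1-\Lbb^2)^{(1+k)/\Lbb-k}\cdot(1+\Lbb)^{-(n-1)}$; both regroupings lead to the same $C_{nki}$ after assembling over the common denominator $i(i+1)$.
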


By Faulhaber's formula, the numbers $C_{nki}$ may be written in terms of Bernoulli
numbers:
\begin{multline*}
C_{nk i}=\frac 1{i(i+1)}\bigg(
(-1)^i(2ki+ni+k+n-1)+k-i \\
-\frac 1{(k+1)^i}
\sum_{j=0}^i \binom{i+1}j B_j(k+n-1)^{i-j+1}
\bigg)\saf.
\end{multline*}
Thus, Corollary~\ref{cor:betticom} expresses the Betti numbers of $\ocaM_{0,n}$ as
certain combinations of Bernoulli numbers, just as Corollary~\ref{cor:bettisim} 
expresses the same in terms of Stirling numbers. For instance,
\begin{align*}
\rk H^6(\ocaM_{5,0}) &=\frac{271}2 -\frac{889}3 B_0 -\frac{277}3 B_1 - 8B_2 - \frac 43 B_3 \\
&\quad + \frac{1427}6 B_0^2 + \frac{590}3 B_0 B_1 + 16B_0 B_2+ 42 B_1^2+ 8B_1 B_2 \\
&\quad -\frac{256}3 B_0^3 - 128 B_0^2 B_1 - 64 B_0 B_1^2 - \frac{32}3 B_1^3 
\\
&=0
\intertext{while}
\rk H^6(\ocaM_{10,0}) &=\frac{756667}2 -\frac{5729797}{12} B_0 -86825 B_1 
- \frac{1271}2 B_2 - 3 B_3 \\
&\quad+ \frac{313439}2 B_0^2 + \frac{252355}4 B_0 B_1 + \frac{729}4 B_0 B_2
+ \frac{12719}2 B_1^2+ \frac{81}2 B_1 B_2 \\
&\quad-\frac{177147}{16} B_0^3 - \frac{59049}8 B_0^2 B_1 - \frac{6561}4 B_0 B_1^2 
- \frac{243}2 B_1^3 \\
&=\oldstylenums{63{,}173}\saf.
\end{align*}
For $n=1,2$, the right-hand side
of~\eqref{eq:betti} equals $1$ for $k=0$ and $0$ for $k>0$.
For every $n\ge 1$, \eqref{eq:betti} may be viewed as an infinite collection of 
identities involving Bernoulli numbers, equivalent to the corresponding identities
involving Stirling numbers arising from Corollary~\ref{cor:bettisim}. It appears to
be useful to have explicit expressions of both types. 

Keel's recurrence relation for the Betti numbers of $\ocaM_{0,n}$ can itself be viewed
as a sophisticated identity of Stirling numbers (by Corollary~\ref{cor:bettisim})
or Bernoulli numbers (by Corollary~\ref{cor:betticom}). Verifying such identities 
directly would provide a more transparent proof of these results, but the combinatorics 
needed for this verification seems substantially more involved than the somewhat indirect 
way we present in this paper.\smallskip

The proof of Theorem~\ref{thm:main} is presented in~\S\ref{sec:Mproof}. It relies crucially
on two combinatorial identities, both of which follow from `Lagrange inversion'. Proofs
of these identities are given in an appendix. Theorem~\ref{thm:M0nbarGC} and the
other corollaries stated above are proved in~\S\ref{sec:PPbettiproof}.

In \S\S\ref{sec:polp}--\ref{sec:Wm1} we study the finer structure of the formulas obtained
in the first part of the paper, building upon work carried out in~\cite{ACM}. This 
information is both interesting in itself and leads to further results on the Betti numbers
of $\ocaM_{0,n}$ and on the generating function $\hM$, see Theorem~\ref{thm:betti2}
and~\ref{thm:Mtree} below. As in~\cite{ACM}, 
we consider the generating function
\[
\alpha_k(z)
=\sum_{n\ge 3} \rk H^{2k}(\ocaM_{0,n})\frac{z^{n-1}}{(n-1)!} 
\]
for the coefficients of $\Lbb^k$ in $[\ocaM_{0,n}]$, i.e., the individual Betti numbers of
$\ocaM_{0,n}$. While Corollary~\ref{cor:bettisim} yields an explicit expression for
the coefficients of this generating function, there is an interesting structure associated
with $\alpha_k(z)$ that is not immediately accessible from such an expression.
Specifically, by~\cite[Theorem~4.1]{ACM} 
there exist polynomials $p_m^{(k)}(z)\in \Qbb[z]$, $0\le m\le k$, such that
\begin{equation}\label{eq:alphak}
\alpha_k(z) = e^z\sum_{m=0}^k (-1)^m p_m^{(k)}(z)\, e^{(k-m) z}
\end{equation}
for all $k\ge 0$.
 It is proved in~\cite{ACM} that $p_m^{(k)}(z)$ has degree $2m$
and positive leading coefficient, and that
\begin{equation}\label{eq:pk0}
p^{(k)}_0=\frac{(k+1)^k}{(k+1)!}\saf,
\end{equation}
and this is used to establish an asymptotic form of log-concavity of $[\ocaM_{0,n}]$. 
Several polynomials $p^{(k)}_m(z)$ are computed explicitly in~\cite{ACM}; for instance,
\[
p^{(1)}_1(z)=1+z+\frac{z^2}2\saf,
\]
so that
\[
\alpha_1(z)=e^z\left(p^{(1)}_0e^z-p^{(1)}_1(z)\right)=e^z\left(e^z-1-z-\frac{z^2}2\right)
=\frac{z^3}{3!}+5\frac{z^4}{4!}+16\frac{z^5}{5!}+42\frac{z^6}{6!}+\cdots
\]
is the generating function for $\rk H^2(\ocaM_{0,n})$. On the basis of extensive computations,
in~\cite{ACM} we proposed the following.

\begin{conj}\label{conj:plogc}
For all $k\ge 1$, the polynomials $p_m^{(k)}$, $m=1,\dots,k$, have positive coefficients
and are log-concave with no internal zeros. All but $p^{(1)}_1$, $p^{(3)}_3$, 
$p^{(5)}_5$ are ultra-log-concave.
\end{conj}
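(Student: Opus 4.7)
The plan is to exploit the explicit form of the generating function $\hM$ provided by Theorem~\ref{thm:main} to pin down each polynomial $p^{(k)}_m(z)$ in closed enough form that the conditions of Conjecture~\ref{conj:plogc} can be checked. Extracting the coefficient of $\Lbb^k$ in the second expression of Theorem~\ref{thm:main} gives $\alpha_k(z)$ as a finite sum of polynomials in $z$ multiplied by exponentials $e^{jz}$ for various $j$; matching this against the decomposition~\eqref{eq:alphak} recovers $p^{(k)}_m(z)$ by reading off the coefficient of $e^{(k-m+1)z}$. This reduces Conjecture~\ref{conj:plogc} to a concrete statement about explicit polynomials accessible from the formulas of the introduction.

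For the computational range $k\le 100$, I would implement this extraction in a computer algebra system: expand $(z-\Lbb-z\Lbb)^j$ via the binomial theorem, collect by powers of $\Lbb$ and of $e^z$, and solve for each $p^{(k)}_m(z)$ as an element of $\Qbb[z]$ of degree~$2m$. One then checks directly the positivity of its coefficients, the log-concavity inequality $a_i^2\ge a_{i-1}a_{i+1}$, the absence of internal zeros, and, in the generic case, the ultra-log-concavity inequality $i(2m-i)\,a_i^2\ge (i+1)(2m-i+1)\,a_{i-1}a_{i+1}$; the sporadic exceptions $p^{(1)}_1$, $p^{(3)}_3$, $p^{(5)}_5$ are simply verified to fail the last test and tabulated separately.

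For infinite families, I would isolate cases in which one parameter varies: the top polynomials $p^{(k)}_k$, the low families $p^{(k)}_m$ with $m$ fixed and $k$ arbitrary, and diagonals $m=k-c$ for fixed $c$. The $\ell$-indexed summand of Theorem~\ref{thm:main} contributes to $p^{(k)}_m$ only for certain ranges, so for $m$ near $0$ or near $k$ one may hope to write $p^{(k)}_m(z)$ as a closed expression of controllable complexity. For each such family I would then attempt to establish log-concavity either by proving real-rootedness and invoking Newton's inequalities, or by recognizing the coefficient sequence as a positive transform of an already log-concave object, such as the Stirling sequences featured in Theorem~\ref{thm:M0nbarGC}.

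The main obstacle is ultra-log-concavity. The existence of three sporadic polynomials for which it fails shows that the property cannot follow from any purely formal feature of $\hM$; a uniform argument must distinguish the generic case from the anomalies, and must moreover cope with the fact that the Lagrange-inversion machinery underlying Theorem~\ref{thm:main} does not natively produce log-concave polynomials. I expect that any complete proof of the conjecture will ultimately require a positive combinatorial or geometric interpretation of the coefficients of $p^{(k)}_m(z)$, perhaps via Lorentzian polynomial techniques applied to a matroidal or tropical model of $\ocaM_{0,n}$ whose associated Whitney-type numbers these polynomials compute.
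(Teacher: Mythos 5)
You are addressing Conjecture~\ref{conj:plogc}, but note that the paper itself does not claim a complete proof of it: it establishes unconditionally only the \emph{first} assertion (positivity of the coefficients of $p^{(k)}_m$, hence no internal zeros), and verifies the ultra-log-concavity assertion by a finite computation for $m\le 100$. The decisive step you are missing is the structural result Theorem~\ref{thm:pcG}. There the coefficient $c^{(k)}_{mj}$ of $z^j$ in $p^{(k)}_m(z)$ is shown to factor as $c^{(k)}_{mj}=\frac{(k-m+1)^{k-2m+j}}{(k-m+1)!}\,\Gamma_{mj}(k-m)$, where $\Gamma_{mj}$ is a \emph{polynomial} in $\ell=k-m$ of degree $2m-j$. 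This is precisely what converts the infinite family of inequalities (one for each $k\ge m$, with $m$ fixed) into a single finite check: in Lemma~\ref{lem:testlcp} the powerful prefactors $(k-m+1)^{k-2m+j}/(k-m+1)!$ cancel out of the ratios $(c^{(k)}_{mj})^2/\bigl(c^{(k)}_{m,j-1}c^{(k)}_{m,j+1}\bigr)$, leaving polynomial inequalities in $\ell$ that can be tested once for all $\ell\ge 0$. Your plan of computing $p^{(k)}_m(z)$ directly for ``$k\le 100$'' tests only finitely many $(k,m)$ pairs and proves nothing for the infinite tail in $k$; you gesture at fixing $m$ and varying $k$, but without the $\Gamma_{mj}$ factorization you have no mechanism to tame the $k$-dependence.

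Similarly, the positivity claim is never actually argued in your sketch. The paper proves it by writing $\Gamma_{mj}(\ell)=\sum_{m_1+m_2=m}\Delta_{m_1 j}(\ell)\beta_{m_2}(\ell)$, where the $\Delta_{m_1 j}$ arise from a logarithm-then-exponential expansion of the generating function and are shown (Proposition~\ref{prop:Delta}) to be polynomials in $\ell$ with strictly positive coefficients, and the $\beta_{m_2}(\ell)$ from $\prod_{j=0}^{\ell-1}(1+ju)$ are manifestly nonnegative. That positivity step is the hard content, and it has nothing to do with real-rootedness or Newton's inequalities, which you propose. Your closing remark that a complete proof of ultra-log-concavity may require a Lorentzian or matroidal interpretation is fair speculation (the paper leaves this open beyond $m\le 100$), but the claim that ultra-log-concavity ``cannot follow from any purely formal feature of $\hM$'' is overstated: the paper reduces it to explicit polynomial positivity statements derived formally from $\hM$ via $\Gamma_{mj}$; only the uniform verification of those polynomial inequalities for all $m$ is missing, and the sporadic exceptions are handled simply by excluding three small values of $\ell$ from the check.
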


In this paper we prove the first statement in this conjecture and provide substantial
numerical evidence for the second part. For this purpose, we assemble the 
polynomials~$p_m^{(k)}(z)$ in the generating function
\[
P(z,t,u):=
\sum_{m\ge 0}\sum_{\ell\ge 0} p^{(m+\ell)}_m(z) t^\ell u^m\saf.
\]
By~\eqref{eq:alphak}, $\hM(z,\Lbb)$ is (up to an exponential factor) a specialization of
$P(z,t,u)$. 
This fact and Theorem~\ref{thm:main} may be used to obtain an explicit expression for
$P(z,t,u)$: we prove the following statement in~\S\ref{sec:polp}.

\begin{theorem}\label{thm:main2}
\[
P(z,t,u) =\sum_{\ell\ge 0} \left(
\frac{(\ell+1)^\ell}{(\ell+1)!} e^{-(\ell+1)z}\,
\left((1+u)(1-u(z+1))\right)^{-\frac{1+\ell(u+1)}u}\,
\prod_{j=0}^{\ell-1} \left(1+\frac {ju}{\ell+1}\right)\right) t^\ell\saf.
\]
\end{theorem}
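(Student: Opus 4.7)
The plan is to compare two decompositions of $\hM$ indexed by $\ell\ge 0$: one dictated by the polynomials $p^{(k)}_m(z)$ through~\eqref{eq:alphak}, the other obtained by recasting the first expression of Theorem~\ref{thm:main}.

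First I would expand $\hM=\sum_{k\ge 0}\alpha_k(z)\Lbb^k$ using~\eqref{eq:alphak} (with the natural convention that the same formula holds at $k=0$, so that $\alpha_0=e^z$ absorbs the $1+z$ of~\eqref{eq:genfun} and $p^{(0)}_0=1$ as in~\eqref{eq:pk0}). Interchanging the order of summation and setting $\ell=k-m$ produces
\[
\hM=\sum_{\ell\ge 0}e^{(\ell+1)z}\,g_\ell(z,\Lbb)\,\Lbb^\ell, \quad
g_\ell(z,\Lbb):=\sum_{m\ge 0}(-1)^m p^{(m+\ell)}_m(z)\,\Lbb^m\in\Qbb[z][[\Lbb]].
\]
Writing $X=(1-\Lbb)(1+(z+1)\Lbb)$, the expansion $\log X=z\Lbb+O(\Lbb^2)$ shows that $X^{(\ell+1)/\Lbb}=e^{(\ell+1)z}\cdot\eta_\ell(z,\Lbb)$ for some $\eta_\ell\in\Qbb[z][[\Lbb]]$ with constant term $1$. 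Hence the $\ell$-th summand of Theorem~\ref{thm:main} takes the shape $e^{(\ell+1)z}\,\tilde g_\ell(z,\Lbb)\,\Lbb^\ell$ with
\[
\tilde g_\ell(z,\Lbb)=\frac{(\ell+1)^\ell}{(\ell+1)!}\,e^{-(\ell+1)z}\,X^{(\ell+1)/\Lbb-\ell}\prod_{j=0}^{\ell-1}\Bigl(1-\frac{j\Lbb}{\ell+1}\Bigr)\in\Qbb[z][[\Lbb]].
\]

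The heart of the argument is a uniqueness statement: if $\sum_{\ell\ge 0}e^{(\ell+1)z}\,a_\ell(z,\Lbb)\,\Lbb^\ell=0$ with $a_\ell\in\Qbb[z][[\Lbb]]$, then every $a_\ell$ vanishes. The coefficient of $\Lbb^k$ in such a sum is a $\Qbb[z]$-linear combination $\sum_{\ell=0}^k[\Lbb^{k-\ell}]a_\ell\cdot e^{(\ell+1)z}$; linear independence of $e^z,e^{2z},\dots$ over $\Qbb[z]$ (via a Wronskian or asymptotic growth as $z\to+\infty$) forces every $[\Lbb^{k-\ell}]a_\ell$ to vanish, and induction on $k$ kills every coefficient. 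Applying this to the two decompositions above yields $g_\ell=\tilde g_\ell$ for every $\ell$.

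Finally I would substitute $\Lbb=-u$ in $g_\ell=\tilde g_\ell$. On the left one has $g_\ell(z,-u)=\sum_m p^{(m+\ell)}_m(z)u^m$; on the right $1-\Lbb\mapsto 1+u$, $1+(z+1)\Lbb\mapsto 1-u(z+1)$, the exponent $(\ell+1)/\Lbb-\ell$ becomes $-(1+\ell(u+1))/u$, and $\prod_j(1-j\Lbb/(\ell+1))$ becomes $\prod_j(1+ju/(\ell+1))$. Multiplying by $t^\ell$ and summing over $\ell$ reproduces the stated formula for $P(z,t,u)$. The main obstacle is the uniqueness step: once one commits to the ambient ring $\Qbb[z,e^z,e^{2z},\dots][[\Lbb]]$, linear independence of exponentials over $\Qbb[z]$ delivers it cleanly, and the rest of the proof is rearrangement.
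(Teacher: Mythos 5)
Correct, and essentially the same argument as the paper's own proof of Theorem~\ref{thm:main2}: your observation that $\tilde g_\ell\in\Qbb[z][[\Lbb]]$ (by expanding $\log X/\Lbb$) carries the same technical content as the paper's Claim~\ref{claim:pkzspol}, which asserts that $\oP(z,su,-u)$ expands in $u$ with coefficients in $\Qbb[z,s]$, and your linear-independence-of-exponentials step is the same algebraic input as the paper's appeal to the transcendence of $e^z$ over $\Qbb(z)$. The only genuine difference is bookkeeping: the paper groups the double sum over $(\ell,m)$ by $k=\ell+m$ and introduces the auxiliary variable $s$, whereas you group by $\ell$ and factor out $e^{(\ell+1)z}$ from $\hM$ directly.
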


In~\S\ref{sec:pkm} we use Theorem~\ref{thm:main2} to obtain more information on the 
polynomials $p^{(k)}_m(z)$. Denote by~$c^{(k)}_{mj}$ the coefficients of $p^{(k)}_m(z)$, 
so that $p^{(k)}_m(z)=\sum_{j=0}^{2m} c^{(k)}_{mj} z^j$.

\begin{theorem}\label{thm:pcG}
For all $m\ge 0$, $0\le j\le 2m$, 
there exist polynomials $\Gamma_{mj}(\ell)\in \Qbb[\ell]$ of degree $2m-j$ such that
\begin{equation}\label{eq:cfromGamma}
c^{(k)}_{mj} =\frac{(k-m+1)^{k-2m+j}}{(k-m+1)!} \cdot \Gamma_{mj}(k-m)
\end{equation}
for all $k\ge m$.
Further $\Gamma_{mj}(\ell)>0$ for all $m\ge 0$, $0\le j\le 2m$, $\ell\ge 0$.
\end{theorem}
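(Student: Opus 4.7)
The plan is to start from Theorem~\ref{thm:main2}, simplify the $\ell$-th summand $T_\ell(z,u)$ of $P(z,t,u)$, and extract $[u^m z^j]T_\ell$ with $\ell = k-m$ and $L = \ell+1$. Using $-(1+\ell(u+1))/u = -L/u - \ell$ together with the expansion $(\log\Phi)/u = -z + \tilde B(u,z)$, where
\[
\Phi(u,z) := (1+u)(1-u(z+1)),\qquad \tilde B(u,z) := \sum_{i\ge 1}\frac{(-1)^i - (z+1)^{i+1}}{i+1}\,u^i,
\]
the cancellation $e^{-Lz}\cdot\Phi(u,z)^{-L/u} = e^{-L\tilde B(u,z)}$ yields the streamlined form
\[
T_\ell(z,u) = \frac{L^\ell}{L!}\,e^{-L\tilde B(u,z)}\,\Phi(u,z)^{-\ell}\,R(u,\ell),\qquad R(u,\ell) := \prod_{j=0}^{\ell-1}\left(1+\frac{ju}{L}\right).
\]
Then $p^{(k)}_m(z) = (L^\ell/L!)\,[u^m]\mathfrak{F}$ where $\mathfrak{F} := e^{-L\tilde B}\,\Phi^{-\ell}\,R$, and Theorem~\ref{thm:pcG} amounts to showing $[u^m z^j]\mathfrak{F} = L^{j-m}\,\Gamma_{mj}(\ell)$ for a polynomial $\Gamma_{mj}\in\Qbb[\ell]$ of degree $2m-j$, positive on $\mathbb Z_{\ge 0}$.

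Next, I would analyze each factor of $\mathfrak{F}$ under coefficient extraction. The identity $\prod_{j=0}^{\ell-1}(L+ju) = \sum_i e_i(0,1,\ldots,\ell-1)\,L^{\ell-i}u^i$ (with $e_i$ the elementary symmetric polynomial, equivalently an unsigned Stirling number of the first kind) gives $[u^i]R = e_i(0,1,\ldots,\ell-1)/L^i$, a polynomial in $\ell$ of degree $2i$ divided by $L^i$. The factor $\Phi^{-\ell} = (1+u)^{-\ell}(1-u(z+1))^{-\ell}$ expands via negative binomials with coefficients polynomial in $\ell$. Finally, $e^{-L\tilde B}$ expands as $\sum_s (-L)^s\tilde B^s/s!$, so $[u^b z^c]e^{-L\tilde B}$ is a polynomial in $L$ of degree at most $b$ (since each $\tilde B$-factor brings at least one power of $u$). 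Convolving these, $[u^m z^j]\mathfrak{F}$ is a rational function in $L$, polynomial in $\ell$, whose only denominators are powers of $L$.

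The first technical step is the divisibility claim: $L^{m-j}[u^m z^j]\mathfrak{F}$ is a polynomial in $\ell$, of degree $2m-j$. I would prove this by expanding $\mathfrak{F}$ explicitly as a triple series in $u,z,\ell$, grouping contributions by the numbers of $\tilde B$-factors, negative-binomial terms, and elementary-symmetric weights used, and then reindexing to expose the cancellation that extracts the factor $L^{j-m}$. The degree claim is confirmed by locating the top-degree contribution: at the extreme $j=2m$, only the $s=m$ term of $e^{-L\tilde B}$ with all $\tilde B$-factors equal to $b_1(z) = -(z^2+2z+2)/2$ contributes (since $\deg_z b_i = i+1$ forces $i=1$ throughout to reach $z$-degree $2m$), producing the positive constant $\Gamma_{m,2m}(\ell) = 1/(2^m m!)$; cases with smaller $j$ pick up polynomial-in-$\ell$ contributions from the Stirling and negative-binomial expansions.

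The main obstacle is the positivity $\Gamma_{mj}(\ell)>0$ on $\mathbb Z_{\ge 0}$. The expression produced above is an alternating sum of products of Stirling numbers, binomial coefficients, and $\tilde B$-contributions, so positivity is not visible from the raw formula. Small-case computations such as $\Gamma_{10}(\ell)=(3\ell^2+3\ell+2)/2$, $\Gamma_{11}(\ell)=2\ell+1$, $\Gamma_{12}(\ell)=1/2$, and $\Gamma_{m,2m}(\ell)=1/(2^m m!)$ suggest the stronger fact that $\Gamma_{mj}(\ell)$ has nonnegative rational coefficients as a polynomial in $\ell$, which would yield positivity for all $\ell\ge 0$ immediately. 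The cleanest route would be a combinatorial interpretation of $\Gamma_{mj}(\ell)$ as a weighted enumeration of decorated rooted labeled trees, natural in light of the Cayley-type factor $(\ell+1)^\ell/(\ell+1)!$ counting rooted labeled trees on $\ell+1$ vertices, with $m$ and $j$ tracking appropriate decorations. Failing a direct interpretation, induction on $m$ using the derivation $u\partial_u$ applied to $T_\ell$, or reinterpretation via the differential equation satisfied by $\hM$, may furnish positivity recursively; exhibiting a manifestly positive structure is expected to be the most intricate part of the proof.
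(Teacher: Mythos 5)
Your reduction to Theorem~\ref{thm:main2}, the streamlining of the summand $T_\ell$ via the cancellation $e^{-Lz}\Phi^{-L/u}=e^{-L\tilde B}$, and the rescaling that turns the task into controlling $[u^mz^j]\mathfrak F$ are all correct and in the same spirit as the paper (which instead rescales $z\mapsto (\ell+1)z$, $u\mapsto u/(\ell+1)$ directly, arriving at the clean normalization of Proposition~\ref{prop:Gac}). Where the argument genuinely stops, however, is at both of the two substantive claims of the theorem.

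For polynomiality in $\ell$, your factoring $\mathfrak F=e^{-L\tilde B}\,\Phi^{-\ell}\,R$ splits the generating function so that $R$ contributes negative powers of $L$ and $e^{-L\tilde B}$ contributes positive powers of $L$, and you only assert that ``reindexing exposes the cancellation.'' This cancellation is exactly the nontrivial content, and it is not automatic from your grouping; you would need to prove it. The paper sidesteps it entirely by choosing a normalization of the generating function for $\Gamma_{mj}$ (the one appearing in Proposition~\ref{prop:Gac}) in which no negative powers of $\ell+1$ ever occur: writing the logarithm of the factor without $\prod_{j<\ell}(1+ju)$ yields a series $\sum_{i\ge 1}\sum_{j\le i+1}\delta_{ij}(\ell)z^ju^i$ whose coefficients $\delta_{ij}(\ell)$ are visibly polynomials of degree $i+1-j$. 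Exponentiating and then multiplying by $\prod_{j<\ell}(1+ju)$ (whose $u^m$-coefficient is a degree-$2m$ polynomial in $\ell$ by Faulhaber) gives $\Gamma_{mj}(\ell)\in\Qbb[\ell]$ of degree $2m-j$ with no cancellation argument needed.

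For positivity, you explicitly flag that it ``is not visible from the raw formula'' and list speculative routes (tree enumeration, an induction via $u\partial_u$, the differential equation) without carrying any of them out, so this part of the theorem is unproven in your proposal. The paper's key observation is precisely that the logarithm just described has \emph{positive} polynomial coefficients $\delta_{ij}(\ell)$: a short computation shows, for $i\ge 1$,
\[
\delta_{ij}(\ell)=
\begin{cases}
\dfrac{(\ell+1)^i\bigl(i+2\ell i+\ell+(-1)^{i+1}(i-\ell)\bigr)}{i(i+1)} & j=0,\\[4pt]
\dfrac{\binom{i+1}{j}(\ell+1)^{i-j}\bigl(\ell(i-j)+i+\ell+\ell i\bigr)}{i(i+1)} & 1\le j\le i,\\[4pt]
\dfrac{1}{i+1} & j=i+1,
\end{cases}
\]
and each case is manifestly a polynomial with positive coefficients in $\ell$. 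Since $\exp$ of a series with positive coefficients has positive coefficients, the auxiliary $\Delta_{mj}(\ell)$ have positive coefficients, and multiplying by $\prod_{j<\ell}(1+ju)$ (also positive) gives $\Gamma_{mj}(\ell)>0$ for all $\ell\ge 0$. Your proposal does not contain this step or an adequate substitute, so the positivity statement remains a gap; to complete the argument you should strip off $R$, take the logarithm of $e^{-L\tilde B}\Phi^{-\ell}$ in the paper's normalization, and verify the positivity of the resulting coefficients directly.
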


The polynomial $\Gamma_{mj}(\ell)$ are effectively computable. For example, 
\[
\Gamma_{4,0}= \frac{27}{128}\ell^8 + \frac{57}{32}\ell^7 + \frac{4295}{576}\ell^6 + 
\frac{1341}{80}\ell^5 + \frac{28867}{1152}\ell^4 + \frac{2143}{96}\ell^3 
+ \frac{3619}{288}\ell^2 + \frac{119}{30}\ell + \frac{13}{24}\saf. 
\]
The first several hundred such polynomials have positive coefficients and are in fact 
log concave. However, $\Gamma_{20,0}$ is not log concave, and the coefficient of
$\ell^2$ in the degree-$42$ polynomial $\Gamma_{21,0}$, namely, 
$-\frac{97330536888617758406393}{2248001455555215360000}$, is {\em negative,\/}
a good reminder of how delicate these notions are and a cautionary tale about making
premature conjectures.

Nevertheless, as stated in Theorem~\ref{thm:pcG}, we can prove that $\Gamma_{mj}(\ell)$ 
is positive for all $m$, $0\le j\le 2m$, $\ell\ge 0$, and this has the following immediate
consequence.

\begin{corol}
The polynomials $p^{(k)}_m(z)$ have positive coefficients.
\end{corol}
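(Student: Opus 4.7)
The plan is to deduce this corollary immediately from Theorem~\ref{thm:pcG}, since both pieces of information needed are packaged there. By definition $p^{(k)}_m(z)=\sum_{j=0}^{2m} c^{(k)}_{mj} z^j$, so positivity of the coefficients of $p^{(k)}_m(z)$ (for $k\ge m$) is equivalent to the assertion that $c^{(k)}_{mj} > 0$ for every $0\le j\le 2m$.

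Applying the factorization in Theorem~\ref{thm:pcG},
\[
c^{(k)}_{mj} = \frac{(k-m+1)^{k-2m+j}}{(k-m+1)!}\cdot \Gamma_{mj}(k-m),
\]
reduces the task to checking that each of the two factors is strictly positive. For the prefactor, the hypothesis $k\ge m$ forces $k-m+1\ge 1$, so $(k-m+1)^{k-2m+j}$ is a positive rational regardless of the sign of the integer exponent $k-2m+j$, and $(k-m+1)!$ is a positive integer; thus the prefactor is positive. For the second factor, the last assertion of Theorem~\ref{thm:pcG} states $\Gamma_{mj}(\ell)>0$ for all $m\ge 0$, $0\le j\le 2m$, and all $\ell\ge 0$, so setting $\ell=k-m\ge 0$ yields $\Gamma_{mj}(k-m)>0$. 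Multiplying the two strictly positive factors gives $c^{(k)}_{mj}>0$, which is exactly the statement of the corollary.

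The substantive content of this corollary is therefore already absorbed into Theorem~\ref{thm:pcG}; no additional obstacle arises in this step. The only work is to check, as above, that the normalization $\frac{(k-m+1)^{k-2m+j}}{(k-m+1)!}$ does not change sign when the exponent $k-2m+j$ happens to be negative, which it does not.
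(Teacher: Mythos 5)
Your proof is correct and follows exactly the route the paper takes: the corollary is deduced directly from Theorem~\ref{thm:pcG} by noting that the prefactor $\frac{(k-m+1)^{k-2m+j}}{(k-m+1)!}$ is positive (since $k-m+1\ge 1$, irrespective of the sign of the exponent) and that $\Gamma_{mj}(k-m)>0$ for $k\ge m$. The paper packages this as Corollary~\ref{cor:pospkm}, citing Proposition~\ref{prop:Gac} for the factorization, which is the same argument.
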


This proves part of Conjecture~\ref{conj:plogc}. Further, we obtain substantial evidence
for the rest of the conjecture, dealing with log-concavity of the polynomials~$p^{(k)}_m(z)$.
Specifically, we reduce the proof of ultra-log-concavity of $p^{(k)}_m(z)$ for a fixed $m$
and all $k\ge m$ to a finite computation involving the polynomials $\Gamma_{mj}$. 
A few hours of computing time verified the conjecture for $m=1,\dots, 100$.

As a byproduct of these considerations, we obtain an alternative expression for the Betti 
numbers of $\ocaM_{0,n}$, in terms of the polynomials $\Gamma_{mj}(\ell)$.

\begin{theorem}\label{thm:betti2}
For $n\ge 3$ and $0\le \ell\le n-3$:
\[
\rk H^{2\ell}(\ocaM_{0,n}) =
\sum_{k+m=\ell} 
(-1)^m \frac{(k+1)^{n-2+k-m}}{k!}
\sum_{j=0}^{2m}
(n-1)\cdots (n-j)\,
\Gamma_{mj}(k)
\saf.
\]
\end{theorem}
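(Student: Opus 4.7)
The plan is to obtain Theorem~\ref{thm:betti2} by a direct coefficient extraction from the generating function $\alpha_\ell(z)$, using the structural decomposition of the polynomials $p^{(k)}_m(z)$ provided by Theorem~\ref{thm:pcG}. No recursion on $n$ or on $\ell$ will be needed; the argument is a manipulation of formal power series.

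First I would rewrite the definition \eqref{eq:alphak} of $\alpha_\ell(z)$ by reindexing the sum so that pairs $(k,m)$ with $k+m=\ell$ parametrize the terms: setting $k=\ell-m$,
\[
\alpha_\ell(z)=\sum_{k+m=\ell}(-1)^m\, p^{(\ell)}_m(z)\, e^{(k+1)z}\saf.
\]
Next I would substitute the expansion $p^{(\ell)}_m(z)=\sum_{j=0}^{2m}c^{(\ell)}_{mj}\,z^j$. The crucial input from Theorem~\ref{thm:pcG} is the identity \eqref{eq:cfromGamma}, which with $k'=\ell$ and $m$ fixed (so that $k'-m=k$) gives
\[
c^{(\ell)}_{mj}=\frac{(k+1)^{k-m+j}}{(k+1)!}\,\Gamma_{mj}(k)\saf.
\]
This is what converts the sum over the polynomials $p^{(\ell)}_m(z)$ into an expression explicitly involving the polynomials $\Gamma_{mj}(k)$.

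Once these substitutions are carried out, the remaining step is to expand $e^{(k+1)z}=\sum_{r\ge 0}(k+1)^r z^r/r!$, extract the coefficient of $z^{n-1}/(n-1)!$ by pairing $r=n-1-j$, and tidy up the resulting expression. The factor $(n-1)!/(n-1-j)!$ becomes the falling factorial $(n-1)(n-2)\cdots(n-j)$ appearing in the statement, and the prefactor simplifies through the identity $(k+1)^{n-1-j}\cdot(k+1)^{k-m+j}/(k+1)!=(k+1)^{n+k-m-1}/(k+1)!=(k+1)^{n-2+k-m}/k!$, which matches exactly the power of $(k+1)$ and the factorial in the denominator appearing in the theorem.

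There is essentially no conceptual obstacle here: everything needed has already been established in the preceding sections, and the argument is purely a book-keeping exercise in generating functions. The only point to watch is the reindexing $m\leftrightarrow \ell-k$, which is what makes the variable $k$ in Theorem~\ref{thm:pcG} (namely $k'-m$) agree with the summation index $k$ in the statement of Theorem~\ref{thm:betti2}. Once that identification is made explicit, the formula \eqref{eq:betti2} drops out immediately from the expansion of $\alpha_\ell(z)$.
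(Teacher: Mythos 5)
Your proof is correct, and it follows essentially the same route as the paper. The only presentational difference is that the paper shortcuts the coefficient extraction by citing \cite[Theorem~5.1]{ACM} for the intermediate formula
\[
\rk H^{2\ell}(\ocaM_{0,n})=\frac{(\ell+1)^{\ell+n-1}}{(\ell+1)!}
+\sum_{m=1}^\ell (-1)^m\sum_{j=0}^{2m}\binom{n-1}{j}\,c_{mj}^{(\ell)}\,j!\,(\ell-m+1)^{n-1-j}\saf,
\]
and then invokes Proposition~\ref{prop:Gac} to replace $c^{(\ell)}_{mj}$ by the $\Gamma_{mj}$ form; you derive that intermediate step directly from \eqref{eq:alphak} by reindexing $m\mapsto\ell-k$, expanding $e^{(k+1)z}$, and reading off the coefficient of $z^{n-1}/(n-1)!$. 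Since $\binom{n-1}{j}\,j!=(n-1)\cdots(n-j)$, the two computations are identical in substance; yours is merely more self-contained. (It is also worth noting, as your bound $j\le 2m$ versus the requirement $j\le n-1$ might suggest an issue, that there is none: the falling factorial $(n-1)\cdots(n-j)$ vanishes for $j\ge n$, consistently with the vanishing of the corresponding coefficient in the generating-function extraction.)
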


This formula generalizes directly the well-known formula for the second Betti number,
\[
\rk H^2(\ocaM_{0,n}) = \frac 12\cdot 2^n - \frac{n^2 - n + 2}2\saf,
\]
cf.~\cite[p.~550]{MR1034665}. For instance,
\begin{multline*}
\rk H^6(\ocaM_{0,n}) =\frac 23\cdot 4^n - \frac {(n+4)(n+3)}{12}\cdot 3^n 
+ \frac{3n^4+14n^3+57n^2+118n+96}{192}\cdot 2^n \\
- \frac{n^6-7 n^5+35 n^4 - 77 n^3 + 120 n^2 - 72 n + 32}{48}\saf,
\end{multline*}
giving
\[
\rk H^6(\ocaM_{0,10}) =\frac 23\cdot 4^{10}-\frac {91}6\cdot 3^{10}+\frac{531}2\cdot 2^{10} 
-\frac{73039}2=\oldstylenums{63{,}173}\saf.
\]

Studying the function $P(z,t,u)$ also reveals an intriguing connection with the {\em Lambert 
W-function,\/} which we explore in~\S\ref{sec:Wm1}.
Recall that the Lambert W-function $W(t)$ is characterized by the identity
$W (t)e^{W (t)} = t$; the reader is addressed to~\cite{MR1414285} for a detailed treatment 
of this function. We consider in particular the `tree function' $T(t)=-W(-t)$, where 
$W$ is the principal branch of the Lambert W-function. (This function owes its name to the fact 
that $T(t)=\sum_{n\ge 1}\frac{T_n t^n}{n!}$, where $T_n=n^{n-1}$ is the number of rooted 
trees on $n$ labelled vertices.)
For a fixed $m$, consider the generating function
\begin{equation}\label{eq:gfPm}
P_m(z,t)=\sum_{\ell\ge 0} p^{(m+\ell)}_m(z) t^\ell\saf,
\end{equation}
that is, the coefficient of $u^m$ in $P$.
This is a polynomial in $\Qbb[[t]][z]$ of degree $2m$. Explicit computations show that
\begin{align*}
P_0 &=e^T \\
P_1 &=\frac {e^T}{(1-T)}\left(\frac 12z^2+(1+T)z+\frac 12 (2+T^2)\right)
\end{align*}
and
\begin{multline*}
P_2=\frac {e^T}{(1-T)^3}\left(
\frac 18 z^4+\frac{5+2T-T^2}6 z^3 
+\frac{8+4T+T^2-2T^3}4 z^2 \right.\\\left.
+\frac{4+2T+T^2-T^4}2 z
+\frac{12+24T-12T^2+8T^3-T^4-4T^5}{24}
\right)
\end{multline*}
where $T=T(t)$ is the tree function.
(In fact, the first expression is a restatement of~\eqref{eq:pk0}.)
For all $m\ge 0$, the series $e^{-T} P_m(z,t)$ is a polynomial in $z$ with coefficients
in $\Qbb[[t]]$. We prove that for all $m\ge 0$ these coefficients can be expressed as 
{\em rational functions\/} in the tree function $T$, as in the examples shown above. 
More precisely, 

\begin{prop}\label{prop:treeprop}
Let $T=T(t)=-W(-t)$ be the tree function.
For $m>0$, there exist polynomials $F_m(z,\tau)\in \Qbb[z,\tau]$, of degree $2m$ in $z$
and $< 3m$ in $\tau$, such that
\begin{equation}\label{eq:Pmtree}
P_m(z,t)=e^T \frac{F_m(z,T)}{(1-T)^{2m-1}}\saf.
\end{equation}
\end{prop}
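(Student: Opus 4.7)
I'll start from the structural decomposition given by Theorem~\ref{thm:pcG}. Writing $P_m(z,t)=\sum_{j=0}^{2m} z^j P_{m,j}(t)$ and applying~\eqref{eq:cfromGamma} with $k=m+\ell$, one obtains
\[
P_{m,j}(t)=\sum_{\ell\ge 0}\frac{(\ell+1)^{\ell+j-m}}{(\ell+1)!}\,\Gamma_{mj}(\ell)\, t^\ell
\]
with $\Gamma_{mj}\in\Qbb[\ell]$ of degree $2m-j$. Expanding $\Gamma_{mj}(\ell)$ in the basis $\{(\ell+1)^a:0\le a\le 2m-j\}$ reduces the problem to evaluating the basic series
\[
E_c(t):=\sum_{\ell\ge 0}\frac{(\ell+1)^{\ell+c}}{(\ell+1)!}\,t^\ell
\]
for the integers $c\in\{-m,\dots,m\}$.

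The core step is to evaluate $E_c$ in closed form in terms of $T$. From $T=te^T$ we have $E_0=T/t=e^T$, and the classical Abel identity $\sum_{\ell\ge 0}(\ell+1)^\ell t^\ell/\ell!=e^T/(1-T)$ (an immediate consequence of Lagrange inversion applied to $T=te^T$) gives $E_1=e^T/(1-T)$. For $c\ge 2$, the splitting $(\ell+1)^{\ell+c}=\ell(\ell+1)^{\ell+c-1}+(\ell+1)^{\ell+c-1}$ yields the recursion $E_c=t\,dE_{c-1}/dt+E_{c-1}$; combined with $t\,d/dt=\tfrac{T}{1-T}\,d/dT$ (from $T'=e^T/(1-T)$ and $te^T=T$), an induction shows
\[
E_c(t)=\frac{e^T R_c(T)}{(1-T)^{2c-1}},\qquad R_c\in\Qbb[T],\ \deg R_c\le c-2\ \ (c\ge 2),
\]
the key point being that the would-be leading $T$-coefficient in the numerator cancels at each inductive step. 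For $c<0$, the relation $E_c=\tfrac{1}{t}\int E_{c+1}\,dt$ together with $dt=(1-T)e^{-T}dT$ gives $E_c=e^T R_c(T)$ with $R_c\in\Qbb[T]$ of degree $|c|$.

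With these formulas, every summand of $P_{m,j}(t)$ is placed over the common denominator $(1-T)^{2m-1}$ by multiplying numerator and denominator by a suitable power of $1-T$. A direct degree check then verifies that the resulting numerator has $T$-degree at most $3m-1$: for $c\ge 2$ it is $(c-2)+(2m-2c)=2m-c-2$; for $c\in\{0,1\}$ it is at most $2m-1$; for $c<0$ it is at most $|c|+(2m-1)\le 3m-1$. Summing over $j$ yields the claimed form
\[
P_m(z,t)=\frac{e^T\,F_m(z,T)}{(1-T)^{2m-1}},\qquad F_m\in\Qbb[z,\tau],\ \deg_z F_m\le 2m,\ \deg_\tau F_m<3m.
\]
The main obstacle is establishing the sharp bound $\deg R_c\le c-2$ for $c\ge 2$: without this bound the numerator of $P_m$ would grow faster than $(1-T)^{2m-1}$ allows. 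The cancellation that delivers it needs a careful inductive tracking of leading terms in the differentiation of $e^T R_{c-1}(T)/(1-T)^{2c-3}$; the remainder of the argument is routine degree bookkeeping.
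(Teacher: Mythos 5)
Your proposal is correct and arrives at the same structural facts, but takes a genuinely different route from the paper. The paper's proof works with $T$ as a formal indeterminate and rewrites
\[
\sum_{\ell\ge 0}\frac{(\ell+1)^{\ell+a}}{(\ell+1)!}e^{-(\ell+1)T}T^\ell = \sum_{N\ge 0}S(N+a,N+1)T^N
\]
by recognizing the inner sum as a Stirling number of the second kind; it then invokes Lemma~\ref{lem:St2}, namely $\sum_N S(N+a,N+1)x^N=\sigma_a(x)/(1-x)^{2a-1}$ with $\deg\sigma_a=a-2$ (proved there via the standard Stirling recursion, whose generating-function form is $\St_a=\tfrac1{1-x}\tfrac{d}{dx}(x\St_{a-1})$). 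You instead work directly with the series $E_c(t)=\sum_\ell\frac{(\ell+1)^{\ell+c}}{(\ell+1)!}t^\ell$ and establish the differential recursion $E_c=t\,dE_{c-1}/dt+E_{c-1}$; since $t\,\tfrac{d}{dt}=\tfrac{T}{1-T}\tfrac{d}{dT}$, a short computation shows this is exactly the same recursion as $\St_a=\tfrac{1}{1-x}\tfrac{d}{dx}(x\St_{a-1})$ transported through $E_a=e^T\St_a(T)$, so the two are interconvertible. Your approach is marginally more elementary (it avoids introducing Stirling numbers and Lemma~\ref{lem:St2}) but loses the Gessel--Stanley interpretation of the numerators $\sigma_a$, which the paper mentions as a remark. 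You also handle $c<0$ by integration rather than by the vanishing $S(N+a,N+1)=0$; both give the polynomial answer $E_c=e^TR_c(T)$.

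Two small things you should tighten. First, you flag the crucial cancellation $\deg R_c\le c-2$ but do not actually carry out the leading-term bookkeeping; spelling it out from the explicit recursion
\[
R_c = T(1-T)(R_{c-1}+R_{c-1}')+(2c-3)T\,R_{c-1}+(1-T)^2R_{c-1}
\]
one sees the $T^{c-1}$ terms from $T(1-T)R_{c-1}$ and $(1-T)^2R_{c-1}$ cancel, and the $T^{c-2}$ coefficient is $(c-1)\cdot[T^{c-3}]R_{c-1}$, so $\deg R_c=c-2$ with leading coefficient $(c-1)!$ by induction (this matches the paper's claim for $\sigma_a$). Second, the proposition asserts $\deg_z F_m=2m$ exactly, while you conclude only $\le 2m$; the equality follows because the $z^{2m}$ coefficient of $F_m$ equals $\Gamma_{m,2m}R_m(T)$, a nonzero polynomial since $\Gamma_{m,2m}>0$ is a constant and $R_m$ has positive leading coefficient. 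You should also note that the constant of integration in $tE_c=\int E_{c+1}\,dt$ is fixed by $tE_c|_{t=0}=0$, so the antiderivative is divisible by $T$ and $E_c/e^T$ is indeed a polynomial in $T$.
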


Setting $F_0(z,T)=\frac 1{1-T}$ extends the validity of~\eqref{eq:Pmtree} to the case $m=0$.

Since for $m>0$ the polynomials $F_m(z,\tau)$ have degree $<3m$ in $\tau$, they are
characterized by the congruence
\[
F_m(z,\tau)\equiv \sum_{\ell=0}^{3m-1} p_m^{(m+\ell)}(z) (1-\tau)^{2m-1} e^{-(\ell+1)\tau} 
\tau^\ell \mod \tau^{3m}\saf.
\]

Proposition~\ref{prop:treeprop} implies the following alternative expression for $\hM$.

\begin{theorem}\label{thm:Mtree}
Let $T=T(e^z \Lbb)$, where $T$ is the tree function. Then with $F_m(z,\tau)$ as above
we have
\begin{equation}\label{eq:Mtree}
\hM=\frac T\Lbb\, \sum_{m\ge 0} \frac{(-1)^m F_m(z,T) }{(1-T)^{2m-1}}\,\Lbb^m \saf,
\end{equation}
\end{theorem}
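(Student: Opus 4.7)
The plan is to derive \eqref{eq:Mtree} by a direct substitution, using Proposition~\ref{prop:treeprop} together with the expansion \eqref{eq:alphak} and the defining identity of the tree function. First I would rewrite $\hM$ in terms of the Betti generating functions $\alpha_k(z)$. Namely, writing $[\ocaM_{0,n}]=\sum_k \rk H^{2k}(\ocaM_{0,n})\,\Lbb^k$ and incorporating the $1+z$ into an extended $\alpha_0(z)=e^z$ (consistent with \eqref{eq:alphak} since $p_0^{(0)}=1$ by \eqref{eq:pk0}), the definition \eqref{eq:genfun} gives
\[
\hM=\sum_{k\ge 0}\alpha_k(z)\,\Lbb^k=e^z\sum_{k\ge 0}\Lbb^k\sum_{m=0}^k (-1)^m p_m^{(k)}(z)\,e^{(k-m)z}.
\]

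Second, I would reindex the double sum by setting $\ell=k-m$ and pull out the $\Lbb^m$ factor, so that the inner sum over $\ell$ becomes the generating function $P_m(z,t)$ of~\eqref{eq:gfPm} evaluated at $t=e^z\Lbb$:
\[
\hM=e^z\sum_{m\ge 0}(-1)^m\Lbb^m\sum_{\ell\ge 0}p_m^{(m+\ell)}(z)(e^z\Lbb)^\ell=e^z\sum_{m\ge 0}(-1)^m\Lbb^m\,P_m(z,e^z\Lbb).
\]
This is the key reindexing that turns the $k$-indexed Betti expansion of $\hM$ into an $m$-indexed expansion whose summands are values of $P_m$.

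Third, I would apply Proposition~\ref{prop:treeprop} with $t=e^z\Lbb$, so $T=T(e^z\Lbb)$, replacing $P_m(z,e^z\Lbb)$ by $e^T F_m(z,T)/(1-T)^{2m-1}$. The result is
\[
\hM=e^{z+T}\sum_{m\ge 0}(-1)^m\,\frac{F_m(z,T)}{(1-T)^{2m-1}}\,\Lbb^m.
\]
To obtain~\eqref{eq:Mtree} it suffices to verify the identity $e^{z+T}=T/\Lbb$, which is immediate from the defining property $T(t)=t\,e^{T(t)}$ of the tree function upon substituting $t=e^z\Lbb$: that identity gives $e^T=T/t=T/(e^z\Lbb)$, hence $e^{z+T}=T/\Lbb$.

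The argument is essentially bookkeeping once the substitution $t=e^z\Lbb$ is identified, so I do not expect a genuine obstacle. The one point worth checking is that all manipulations are legitimate formal power series operations: since $T(t)\in t\,\Qbb[[t]]$, the series $T(e^z\Lbb)$ has $\Lbb$-valuation at least one in $\Qbb[[z]][[\Lbb]]$, so $(1-T)^{-(2m-1)}$ and the interchange of the sums over $k$, $m$, $\ell$ make sense level-wise in $\Lbb$, and the coefficient of each $\Lbb^N$ in \eqref{eq:Mtree} is extracted from only finitely many $m$ (those with $m\le N$). This justifies the above string of equalities as an identity in $\Qbb[[z]][[\Lbb]]$, completing the proof.
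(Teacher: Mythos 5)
Your proof is correct and follows essentially the same route as the paper's: the paper also applies Proposition~\ref{prop:treeprop} to the identity $\hM(z,\Lbb)=e^z P(z,e^z\Lbb,-\Lbb)$ (which is exactly your reindexing of the $\alpha_k$ expansion) and then uses $T(t)=te^{T(t)}$ to rewrite $e^{z+T}$ as $T/\Lbb$. Your added remark on the $\Lbb$-adic convergence of the formal manipulations is a small bonus of rigor but not a different argument.
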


While the identities stated in Theorem~\ref{thm:main} are more explicit expressions
for $\hM$, they show no direct trace of the relation with the principal branch of the 
Lambert W-function displayed in Theorem~\ref{thm:Mtree}. The proof of this result 
relies on another combinatorial identity involving Stirling numbers,
whose proof is also given in the appendix.\smallskip

It is essentially evident that~\eqref{eq:M0nbarGC} may be expressed in terms of a 
product of matrices. We formalize this remark in~\S\ref{sec:Stirma}, by showing that
the class $[\ocaM_{0,n}]$ may be recovered as a generalized trace of a matrix 
obtained in a very simple fashion from the standard matrices defined by Stirling 
numbers of first and second kind (Theorem~\ref{thm:Stma}). This is simply a restatement
of~Theorem~\ref{thm:M0nbarGC}, but it may help to relate the results of this paper with
the extensive literature on Stirling numbers.

There is another connection between $\hM_{0,n}$ and the Lambert W-function: the
generating function for the Euler characteristic of $\hM_{0,n}$ may be expressed in
terms of the other real branch of the Lambert W-function, denoted $W_{-1}$ 
in~\cite{MR1414285}. The various expressions obtained in this paper for the Betti 
numbers imply explicit formulas for the individual Euler characteristics $\chi(\hM_{0,n})$.
In~\S\ref{sec:OtEcoM} we highlight one compelling appearance of $\chi(\hM_{0,n})$ 
as the leading coefficient of a polynomial determined by a sum of products of
Stirling numbers, see Proposition~\ref{prop:Eucha}.\smallskip

We end this introduction by pointing out that another expression for~$[\ocaM_{0,n}]$
may be obtained as a consequence of the result of Ezra Getzler 
mentioned earlier, which we reproduce as follows. For consistency with the notation
used above, we interpret Getzler's formula with $\Lbb=t^2$.

\begin{theorem}[Getzler \cite{MR1363058}]\label{thm:Getz}
Let
\[
g(x,\Lbb) =x - \sum_{n=2}^\infty \frac{x^n}{n!} \sum_{i=0}^{n-2}(-1)^i \Lbb^{(n-i-2)} 
\dim H_i(\caM_{0,n+1}) 
=x-\frac{(1+x)^\Lbb - (1+\Lbb x)}{\Lbb(\Lbb-1)}\saf.
\]
Then
\begin{align*}
f(x,\Lbb):=x+\sum_{n=2}^\infty \frac{x^n}{n!} \sum_{i=0}^{n-2} \Lbb^i \dim H_{2i}(\ocaM_{0,n+1})
\end{align*}
is the inverse of $g$, in the sense that $f(g(x,\Lbb),\Lbb)=x$.
\end{theorem}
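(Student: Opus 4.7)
My plan is to derive Getzler's inversion identity from the standard boundary stratification of $\ocaM_{0,n+1}$, translated through the Joyal species calculus for labelled rooted trees. The strata of $\ocaM_{0,n+1}$ are in bijection with isomorphism classes of stable $(n+1)$-leg trees $\tau$ (vertices of valence $\ge 3$, $n+1$ labelled legs), and the stratum indexed by $\tau$ is isomorphic to $\prod_{v\in V(\tau)}\caM_{0,\mathrm{val}(v)}$. Additivity of $[\,\cdot\,]$ on locally closed decompositions yields
\[
[\ocaM_{0,n+1}]=\sum_\tau \prod_{v\in V(\tau)}[\caM_{0,\mathrm{val}(v)}].
\]
Since $\caM_{0,k}$ is smooth of dimension $k-3$ with rational cohomology of pure Tate type (it is a hyperplane-arrangement complement), Poincar\'e duality between ordinary and compactly supported cohomology gives $[\caM_{0,k}]=\sum_i(-1)^i\Lbb^{k-3-i}\dim H_i(\caM_{0,k})$. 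Consequently, the series subtracted from $x$ in the definition of $g$ is exactly the exponential generating function $\Psi(y,\Lbb):=\sum_{n\ge 2}[\caM_{0,n+1}]y^n/n!$; equivalently, $g(x,\Lbb)=x-\Psi(x,\Lbb)$ while $f(x,\Lbb)=x+\sum_{n\ge 2}[\ocaM_{0,n+1}]x^n/n!$.

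The core step is to interpret the tree sum as a compositional functional equation. I would root each stable tree at its leg labelled, say, $0$, which sets up a bijection between isomorphism classes of stable $(n+1)$-labelled trees and the Joyal species of rooted labelled trees with $n$ labelled leaves in which every internal vertex has $\ge 2$ children, a vertex with $k$ children carrying the weight $[\caM_{0,k+1}]$. The exponential formula for this species yields
\[
R(x)=x+\sum_{k\ge 2}[\caM_{0,k+1}]\frac{R(x)^k}{k!}=x+\Psi(R(x),\Lbb),
\]
and identifying $R(x)$ with $f(x,\Lbb)$ via the stratification rearranges this to $g(f(x,\Lbb),\Lbb)=R(x)-\Psi(R(x),\Lbb)=x$. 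Inverses in the composition group of formal power series of the form $x+O(x^2)$ are unique, so the claimed identity $f(g(x,\Lbb),\Lbb)=x$ is immediate.

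The closed form $\Psi(y,\Lbb)=((1+y)^\Lbb-(1+\Lbb y))/(\Lbb(\Lbb-1))$ is a classical identity, most cleanly obtained by solving the recurrence for $[\caM_{0,n+1}]$ induced by the forgetful fibration $\caM_{0,n+1}\to\caM_{0,n}$, whose fibres are $\mathbb{P}^1$ with $n$ points removed (class $\Lbb+1-n$). The main obstacle is the species-theoretic identification in the middle step: one must verify that distinguishing a single leg as a root is indeed a bijection between isomorphism classes of stable $(n+1)$-labelled trees and the stated species of rooted labelled trees, and that the product over vertices in the stratification sum matches the multiplicative structure of the corresponding EGF. Since all legs are labelled and one is rooted, every such tree has trivial automorphism group, so the usual symmetry-factor subtleties in the exponential formula do not appear; nevertheless this bookkeeping is the sole geometric-to-combinatorial bridge in the argument and deserves the most care.
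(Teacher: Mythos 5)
The paper does not prove this theorem; it cites it from Getzler and later notes that it is equivalent to Manin's functional equation~\eqref{eq:funeqM}, which is then used as a black box throughout. So there is no internal proof to compare against. Your argument is the standard one (essentially what Getzler and Manin do) and is correct: the locally closed decomposition of $\ocaM_{0,n+1}$ by its boundary strata, indexed by stable $(n+1)$-marked trees with stratum $\prod_v\caM_{0,\mathrm{val}(v)}$, gives $[\ocaM_{0,n+1}]=\sum_\tau\prod_v[\caM_{0,\mathrm{val}(v)}]$; rooting at a distinguished leg (possible with no automorphism corrections, since all legs are labelled) turns this sum into the compositional functional equation $f = x + \Psi\circ f$, i.e.\ $g\circ f=\mathrm{id}$, and uniqueness of compositional inverses for series of the form $x+O(x^2)$ gives $f\circ g=\mathrm{id}$ as stated. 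The identification $[\caM_{0,k}]=\sum_i(-1)^i\Lbb^{k-3-i}\dim H_i(\caM_{0,k})$ via Poincar\'e duality and purity of the hyperplane-arrangement cohomology is the right mechanism to recognize the series in the definition of $g$ as $x-\sum_{n\ge2}[\caM_{0,n+1}]x^n/n!$, and the closed form for that series follows from the forgetful fibration exactly as you say. Two small points worth making explicit in a write-up: (i) the species involved is the one of rooted trees on a labelled leaf set where every internal vertex has $\ge 2$ children and carries weight $[\caM_{0,(\#\text{children})+1}]$, and the base case (a bare leaf) is what produces the term $x$ in $f$; (ii) your remark about ``symmetry-factor subtleties'' is slightly off, since in the fully labelled (exponential) setting there are never such factors to begin with — the relevant observation is simply that stable trees with all legs labelled have trivial automorphism groups, so the stratum is $\prod_v\caM_{0,\mathrm{val}(v)}$ on the nose rather than a quotient.
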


Getzler's generating function~$f$ differs from our $\hM$ by the absence of the constant
term. Theorem~\ref{thm:Getz} may be seen to be equivalent to the functional equation
for the Poincar\'e polynomial mentioned earlier, also proved (with different notation) in~\cite{MR1363064}. 

We can apply Lagrange inversion (see e.g., the first formula in \cite[p.~146, (18)]{MR231725}) 
to the identity $f(g(x,\Lbb),\Lbb)=x$. Specifically, assume that
\[
f(y)=\sum_{n\ge 0} a_n\frac{y^n}{n!}
\]
is a power series such that
\[
\alpha(x)=f(\beta(x))=\sum_{n\ge 0} a_n \frac{\beta(x)^n}{n!}
\]
for constants $a_n$ and functions $\alpha(x)$, $\beta(x)$ such that~$\beta(0)=0$. 
Then Lagrange inversion states that
\[
a_n=\frac{d^{n-1}}{dx^{n-1}}(\alpha'(x) \varphi(x)^n)|_{x=0}\saf,
\]
where $\varphi(x)=\frac x{\beta(x)}$. Applying this formula with $a_n=[\ocaM_{0,n+1}]$, 
$\alpha(x)=x$, $\beta(x)=g(x,\Lbb)$ gives 
\[
[\ocaM_{0,n+1}]=(n-1)!\cdot \text{coefficient of $x^{n-1}$ in the expansion of }
\left(\frac x{g(x,\Lbb)}\right)^n
\]
with $g(x,\Lbb)$ as in Theorem~\ref{thm:Getz}. Therefore,
\[
[\ocaM_{0,n}] = (n-2)!\cdot \text{coefficient of $x^{n-2}$ in the expansion of }
\left(\frac {\Lbb(\Lbb-1)x}{1+\Lbb^2x- (1+x)^\Lbb}\right)^{n-1}
\saf.
\]
This also easily implies that
\[
\hchi(\ocaM_{0,n})=(n-2)!\cdot \text{coefficient of $x^{n-2}$ in the expansion of }
\left(\frac 1{2-\frac {1+x}x \log(1+x)}\right)^{n-1}\saf.
\]
We do not know if alternative proofs of the results in this paper may be obtained 
from these expressions.\smallskip


{\em Acknowledgments.} 
P.A.~was supported in part by the Simons Foundation, collaboration 
grant~\#625561, and thanks Caltech for the hospitality. M.M.~was
supported by NSF grant DMS-2104330. E.N.~was supported
by a Summer Undergraduate Research Fellowship at Caltech
funded in part by a generous gift from Marcella Bonsall. 


\section{Proof of Theorem~\ref{thm:main}}\label{sec:Mproof}

Throughout the paper, all functions are implicitly considered as formal power series, used as
generating functions for the coefficients in their expansion. 

A recursion determining $[\ocaM_{0,n}]$ or, equivalently, the Poincar\'e polynomial of 
$\ocaM_{0,n}$, is well-known: see~\cite{MR1034665}, \cite{MR1363064}, \cite{MR3701904}.
It is equivalent to a differential equation for the generating function
\[
\hM(z,\Lbb):=1+z+\sum_{n\ge 3} [\ocaM_{0,n}] \frac{z^{n-1}}{(n-1)!}\saf,
\]
namely,
\begin{equation}\label{eq:Mdifeq}
\frac{\partial \hM}{\partial z}=\frac{\hM}{1+\Lbb(1+z)-\Lbb \hM}\saf,
\end{equation}
subject to the initial condition $\hM|_{z=0}=1$. (This is a restatement of~\cite[(0.8)]{MR1363064}.)

Denote by $\oM$ the first expression stated in Theorem~\ref{thm:main}:
\begin{equation}\label{eq:Mgfonly}
\oM:=\sum_{\ell\ge 0} \frac{(\ell+1)^\ell}{(\ell+1)!} 
\bigg((1-\Lbb)(1+(z+1)\Lbb)\bigg)^{\frac {\ell+1}\Lbb -\ell}\,
\prod_{j=0}^{\ell-1} \left(1-\frac{j\Lbb}{\ell+1}\right) \Lbb^\ell\saf,
\end{equation}
where we view $\Lbb$ as an indeterminate.
We need to verify that $\hM=\oM$. To prove this result, it suffices
to verify that $\oM$ satisfies~\eqref{eq:Mdifeq}, which we rewrite as
\begin{equation}\label{eq:deoM}
(1+\Lbb(1+z))\frac{\partial\oM}{\partial z}-\oM=\Lbb\, \frac{\partial \oM}{\partial z}\, \oM\saf,
\end{equation}
and the initial condition~$\oM|_{z=0}=1$.

Both of these claims will be reduced to combinatorial identities. 
For the first, it is convenient to separate the part of~$\oM$ depending on $z$ from 
the rest: write
\[
\oM=\sum_{k=0}^\infty F(k) (1+(z+1)\Lbb)^{\frac{1+k(1-\Lbb)}\Lbb}
\]
with
\[
F(k):=\frac{(k+1)^k}{(k+1)!} \cdot
\prod_{j=0}^{k-1} \left(1-\frac{j\Lbb}{k+1}\right) (1-\Lbb)^{\frac{1+k(1-\Lbb)}\Lbb}
\Lbb^k\saf.
\]
We have
\[
\frac{\partial\oM}{\partial z}
=\sum_{k=0}^\infty F(k) (1+(z+1)\Lbb)^{\frac{1+k(1-\Lbb)}\Lbb}
\frac{1+k(1-\Lbb)}{(1+(z+1)\Lbb)}\saf,
\]
implying
\[
(1+(z+1)\Lbb)\frac{\partial\oM}{\partial z}-\oM
=\sum_{k=1}^\infty F(k) k (1-\Lbb) (1+(z+1)\Lbb)^{\frac{1+k(1-\Lbb)}\Lbb}\saf.
\]
In order to verify~\eqref{eq:deoM}, we need to verify that this expression equals
\begin{align*}
\Lbb \, \frac{d\oM}{dz}\, \oM &=
\Lbb\sum_{\ell,m}  
F(\ell) (1+(z+1)\Lbb)^{\frac{1+\ell(1-\Lbb)}\Lbb} 
\frac{1+\ell(1-\Lbb)}{(1+(z+1)\Lbb)}
F(m) (1+(z+1)\Lbb)^{\frac{1+m(1-\Lbb)}\Lbb} \\
&=\Lbb\sum_{\ell,m}  
F(\ell) F(m) (1+\ell(1-\Lbb))(1+(z+1)\Lbb)^{\frac{2+(\ell+m)(1-\Lbb)}\Lbb-1} \\
&= \Lbb\sum_{k\ge 1} \sum_{\ell+m=k-1}  
F(\ell) F(m) (1+\ell(1-\Lbb))(1+(z+1)\Lbb)^{\frac{1+k(1-\Lbb)}\Lbb} \saf.
\end{align*}
Therefore, In order to prove that $\oM$ satisfies~\eqref{eq:deoM},
it suffices to prove that for all~$k\ge 1$
\[
\Lbb \sum_{\ell+m=k-1} F(\ell) F(m)\cdot (1+\ell(1-\Lbb)) = F(k)\cdot k(1-\Lbb) \saf.
\]
To simplify this further, let
\[
E(\ell):=\frac{(\ell+1)^\ell}{(\ell+1)!} \cdot \prod_{j=0}^{\ell-1} \left(1-\frac{j\Lbb}{\ell+1}\right)\saf,
\]
so that
\[
F(\ell)=E(\ell)\cdot  (1-\Lbb)^{\frac{1+\ell(1-\Lbb)}\Lbb} \Lbb^\ell\saf.
\]
The sought-for identity is
\begin{multline*}
\Lbb \sum_{\ell+m=k-1} E(\ell)\cdot  (1-\Lbb)^{\frac{1+\ell(1-\Lbb)}\Lbb} \Lbb^\ell
E(m)\cdot  (1-\Lbb)^{\frac{1+m(1-\Lbb)}\Lbb} \Lbb^m\cdot (1+\ell(1-\Lbb))  \\
= E(k)\cdot  (1-\Lbb)^{\frac{1+k(1-\Lbb)}\Lbb} \Lbb^k \cdot k(1-\Lbb) \saf.
\end{multline*}
Clearing the common factor proves the following.

\begin{claim}\label{claim:fire}
Let
\[
E(\ell):=\frac{(\ell+1)^\ell}{(\ell+1)!} \cdot \prod_{j=0}^{\ell-1} \left(1-\frac{j\Lbb}{\ell+1}\right)\saf.
\]
Then in order to prove that $\oM$ satisfies~\eqref{eq:deoM}, it suffices to prove that for all
$k\ge 1$,
\begin{equation}\label{eq:mainid}
\sum_{\ell+m=k-1} E(\ell) E(m)\cdot (1+\ell(1-\Lbb)) = k\cdot E(k) \saf.
\end{equation}
\end{claim}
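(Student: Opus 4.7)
The plan is to reduce the differential equation \eqref{eq:deoM} to a family of purely algebraic identities (one for each $k\ge 1$) by expanding $\oM$ in a convenient basis of powers of $1+(z+1)\Lbb$. Looking at \eqref{eq:Mgfonly}, the $\ell$-th summand is already a single $\Lbb$-dependent coefficient times $(1+(z+1)\Lbb)^{\alpha(\ell)}$ with exponent $\alpha(\ell):=(1+\ell(1-\Lbb))/\Lbb$, so the whole $\oM$ reads $\sum_\ell F(\ell)(1+(z+1)\Lbb)^{\alpha(\ell)}$ for appropriate $F(\ell)$. Every operation in \eqref{eq:deoM} -- differentiation in $z$, multiplication by $1+\Lbb(1+z)$, and the nonlinear product $\partial_z\oM\cdot\oM$ -- preserves this shape up to combining exponents, which makes coefficient matching viable.

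Differentiating in $z$ gives $\partial_z\oM=\sum_\ell F(\ell)\,\alpha(\ell)\Lbb\,(1+(z+1)\Lbb)^{\alpha(\ell)-1}$, and the prefactor $1+\Lbb(1+z)$ on the left-hand side of \eqref{eq:deoM} restores the exponent $\alpha(\ell)$; after subtracting $\oM$ the $\ell=0$ term vanishes and only the contributions with $k\ge 1$ survive, producing $\sum_{k\ge 1}F(k)\,k(1-\Lbb)\,(1+(z+1)\Lbb)^{\alpha(k)}$. On the right-hand side, expanding $\Lbb\,\partial_z\oM\cdot\oM$ as a double sum indexed by $(\ell,m)$ produces terms with exponent $\alpha(\ell)+\alpha(m)-1$; a routine check using the formula for $\alpha$ shows that this equals $\alpha(k)$ precisely when $\ell+m=k-1$.

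Matching the coefficient of $(1+(z+1)\Lbb)^{\alpha(k)}$ on the two sides for each $k\ge 1$ reduces the differential equation to $\Lbb\sum_{\ell+m=k-1}F(\ell)F(m)(1+\ell(1-\Lbb))=F(k)\,k(1-\Lbb)$. Writing $F(\ell)=E(\ell)(1-\Lbb)^{\alpha(\ell)}\Lbb^\ell$, the extraneous factors pair up neatly: $\alpha(\ell)+\alpha(m)=\alpha(k)+1/\Lbb$ when $\ell+m=k-1$, and the powers of $\Lbb$ combine as $\Lbb^\ell\cdot\Lbb^m\cdot\Lbb=\Lbb^k$, so the common factor $(1-\Lbb)^{\alpha(k)+1/\Lbb}\Lbb^k$ can be divided out. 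Exactly \eqref{eq:mainid} remains.

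The whole argument is essentially bookkeeping. The one step that needs a moment of thought is the coefficient-matching: since the exponents $\alpha(k)$ are distinct rational functions of $\Lbb$, the generalized binomial expansions of $(1+(z+1)\Lbb)^{\alpha(k)}$ are linearly independent as formal power series in $z$ over the appropriate coefficient ring, so equality of the two sums does force equality term by term. The real combinatorial content -- proving \eqref{eq:mainid} -- is not addressed by this reduction and, as the paper signals, will be handled separately via Lagrange inversion.
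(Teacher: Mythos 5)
Your reduction follows the paper's own proof step for step: expand $\oM$ in the basis $(1+(z+1)\Lbb)^{\alpha(\ell)}$, note that all three terms in \eqref{eq:deoM} stay in that form, observe that the product on the right produces exponent $\alpha(k)$ exactly when $\ell+m=k-1$, and then cancel the common factors of $(1-\Lbb)$ and $\Lbb$ to arrive at \eqref{eq:mainid}.

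One arithmetic slip: you write $\alpha(\ell)+\alpha(m)=\alpha(k)+1/\Lbb$ for $\ell+m=k-1$. In fact
\[
\alpha(\ell)+\alpha(m)=\frac{2+(k-1)(1-\Lbb)}{\Lbb}=\frac{1+k(1-\Lbb)}{\Lbb}+1=\alpha(k)+1,
\]
so the common factor being cleared is $(1-\Lbb)^{\alpha(k)+1}\Lbb^k$, not $(1-\Lbb)^{\alpha(k)+1/\Lbb}\Lbb^k$. The extra unit in the exponent is absorbed by the $(1-\Lbb)$ sitting inside $k(1-\Lbb)$ on the right-hand side, so the cancellation still leaves precisely \eqref{eq:mainid}; the slip affects only the stated exponent, not the conclusion. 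Also, the linear-independence remark at the end is not needed for the implication you are actually proving (that \eqref{eq:mainid} for all $k$ implies \eqref{eq:deoM}): once the coefficient identities hold, you just multiply by the $k$-th basis element and sum. Linear independence would be relevant only for the converse.
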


For every $k$, this is an identity of polynomials in $\Lbb$. For example, for $k=3$ 
it states that
\[
\left(\frac 32-\frac\Lbb 2\right)(3-2\Lbb)
+(2-\Lbb)
+\left(\frac 32-\frac \Lbb 2\right)
=8\left(1-\frac \Lbb 4\right)\left(1 - \frac\Lbb 2\right)\saf.
\]
As such, \eqref{eq:mainid} is equivalent to the identity obtained by performing an 
invertible change of variables. Setting $\Lbb=-\frac 1{w-1}$, 
\begin{align*}
E(\ell) &= \frac 1{(\ell+1)!} \prod_{j=0}^{\ell-1} (\ell+1-j\Lbb)
= \frac 1{(\ell+1)!} \prod_{j=0}^{\ell-1} \frac{(w-1)(\ell+1)+j}{w-1} \\
&= \frac {\prod_{j=0}^{\ell-1} ((w-1)(\ell+1)+j)}{(\ell+1)!(w-1)^\ell}  \saf,
\end{align*}
and trivial manipulations show that~\eqref{eq:mainid} is then equivalent to the
identity
\begin{multline*}
\sum_{\ell+m=k-1}
\frac {\prod_{j=1}^{\ell} ((w-1)(\ell+1)+j)}{\ell!}\cdot
\frac {\prod_{j=0}^{m-1} ((w-1)(m+1)+j)}{m!} \cdot \frac 1{m+1}\\
=\frac {\prod_{j=1}^{k-1} ((w-1)(k+1)+j)}{(k-1)!}
\end{multline*}
of polynomials in $\Qbb[w]$. In order to verify this identity, it suffices to verify that it
holds when $w$ is evaluated at infinitely many integers. We can then restate
Claim~\ref{claim:fire} as follows. 

\begin{claim}
In order to verify that $\oM$ satisfies~\eqref{eq:deoM}, it suffices to prove the following
binomial identity
\begin{equation}\label{eq:binid1}
\sum_{\ell+m=k-1}
\binom{w(\ell+1)-1}\ell \cdot
\binom{w(m+1)-2}m 
\cdot \frac 1{m+1}
=\binom{w(k+1)-2}{k-1}
\end{equation}
for all positive integers $k$ and $w$.
\end{claim}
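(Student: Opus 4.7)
The statement is a reduction: assuming the previous claim (which already showed that \eqref{eq:deoM} follows from the polynomial identity just above this final claim), we need to verify that the identity in $\Qbb[w]$ displayed above the claim coincides, after rewriting the products as binomial coefficients, with \eqref{eq:binid1} evaluated at positive integers $w$. The plan is therefore simply to match the three factors in the displayed identity with the three binomial coefficients in \eqref{eq:binid1}, and then invoke the principle that a polynomial identity in $\Qbb[w]$ that holds at infinitely many values of $w$ holds identically.

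Concretely, I would begin by rewriting each product as a descending product of consecutive integers via the substitution $i \mapsto \ell - i$ (or $m-1-i$, etc.). For the first factor, one has
\[
\prod_{j=1}^{\ell} ((w-1)(\ell+1)+j) \;=\; \prod_{i=0}^{\ell-1} \bigl(w(\ell+1)-1-i\bigr),
\]
so that dividing by $\ell!$ yields $\binom{w(\ell+1)-1}{\ell}$. The second factor is handled identically: the product
\[
\prod_{j=0}^{m-1} ((w-1)(m+1)+j) \;=\; \prod_{i=0}^{m-1}\bigl(w(m+1)-2-i\bigr)
\]
divided by $m!$ gives $\binom{w(m+1)-2}{m}$, and the factor $\frac{1}{m+1}$ passes through untouched. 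The right-hand side is dealt with identically, giving $\binom{w(k+1)-2}{k-1}$. Thus the polynomial identity displayed above the claim is, for every fixed $k$, literally $(k-1)!$ times \eqref{eq:binid1} when $w$ is interpreted as an integer.

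Next, since both sides of \eqref{eq:binid1} (cleared of the factors $m!$, $\ell!$, $(k-1)!$ that produce the binomial coefficients) are polynomials in $w$ with rational coefficients, verifying \eqref{eq:binid1} for all positive integers $w$ and all $k\ge 1$ establishes the identity as an identity in $\Qbb[w]$. Reversing the substitution $w = 1 - 1/\Lbb$ (which is a bijection of $\Lbb\ne 0$ with $w\ne 1$ and takes polynomial identities in $w$ to rational-function identities in $\Lbb$ that reduce, after clearing denominators, to the polynomial identity \eqref{eq:mainid} in $\Lbb$) recovers \eqref{eq:mainid}. The previous claim then concludes that $\oM$ satisfies \eqref{eq:deoM}.

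No genuine obstacle is expected: the content of this step is purely bookkeeping, matching up indices so that the three descending products become the three binomial coefficients of \eqref{eq:binid1}. The only thing to be a little careful about is the shift by $-1$ versus $-2$ in the top entries of the binomial coefficients, which comes from whether the product starts at $j=0$ or $j=1$; the combinatorial work of actually proving \eqref{eq:binid1} is deferred (to the Lagrange-inversion argument in the appendix) and is not part of this reduction.
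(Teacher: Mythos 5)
Your reduction is correct and is essentially the paper's own argument: after the substitution $\Lbb=-\frac{1}{w-1}$ one matches the three descending products with the binomial coefficients in \eqref{eq:binid1} and then uses the fact that a polynomial identity in $\Qbb[w]$ follows from its validity at infinitely many (e.g., all positive) integer values of $w$, which via Claim~\ref{claim:fire} yields \eqref{eq:mainid} and hence \eqref{eq:deoM}. One cosmetic slip: with the denominators $\ell!$, $m!$, $(k-1)!$ already in place, the displayed identity in $\Qbb[w]$ is literally equal to \eqref{eq:binid1} at integer $w$, not $(k-1)!$ times it.
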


This identity indeed does hold. It may be obtained as a specialization of more general, 
known, identities; see e.g., \cite[p.~169]{MR231725} or \cite{MR75170}, where such
identities are identified as generalizations of `Vandermonde's convolution'. 
We include a proof of~\eqref{eq:binid1} in the appendix, Lemma~\ref{lem:binid1},
and this concludes the verification that $\oM$ satisfies the differential 
equation~\eqref{eq:deoM}.

Next, we need to verify that $\oM$ satisfies the same initial condition as $\hM$, i.e., 
$\oM|_{z=0}=1$, that is,
\begin{equation}\label{eq:inico}
\sum_{\ell\ge 0} \frac{(\ell+1)^\ell}{(\ell+1)!} 
(1-\Lbb^2)^{\frac {1+\ell}\Lbb-\ell}\,
\prod_{j=0}^{\ell-1} \left(1-\frac{j\Lbb}{\ell+1}\right) \Lbb^\ell = 1\saf.
\end{equation}
This statement is surprisingly nontrivial. 

\begin{remark}
Identity~\eqref{eq:inico} may be viewed as an infinite collection of identities involving 
Bernoulli numbers. Explicitly, for $k\ge 0$ and $i\ge 1$ let
\[
A_{k i}:=\begin{cases}
-\dfrac{2(k+1)}{i+1} -\dfrac 1{i(i+1)(k+1)^i}\sum_{j=0}^i \binom{i+1}j k^{i-j+1} B_j 
&\quad\text{$i$ odd $>0$}\\
\hfill\dfrac{2k}i\hfill -\dfrac 1{i(i+1)(k+1)^i}\sum_{j=0}^i \binom{i+1}j k^{i-j+1} B_j 
&\quad\text{$i$ even $>0$}\saf.
\end{cases}
\]
Identity~\eqref{eq:inico} is then equivalent to the assertion that
for all positive integers $\ell$
\[
\sum_{k=0}^\ell
\frac{(k+1)^k}{(k+1)!} \sum_{m\ge 0} \frac 1{m!} 
 \sum_{i_1+\cdots+i_m=\ell-k} A_{k i_1}\cdots A_{k i_n} = 0
\]
where all indices $i_j$ in the summation are positive integers. (This computation will
be carried out in a more general setting in the proof of Corollary~\ref{cor:betticom}.)
For increasing values of~$\ell$, this identity states that
\begin{align*}
B_1 &= -\frac 12 B_0\\
B_2 &= 4 - \frac{13}3 B_0- B_1 + \frac 14 (B_0+2B_1)^2 \\
B_3 &= 12 - \frac{259}{12} B_0- 15B_1 + \frac{1}2 B_2
+ \frac{27}4 B_0^2+ \frac{45}4 B_0 B_1\\
&\qquad\qquad\qquad
+ \frac{3}4 B_0 B_2+ \frac{7}2 B_1^2+ \frac{3}2 B_1 B_2-\frac 1{16} (B_0+2B_1)^3
\end{align*}
etc. Our verification of~\eqref{eq:inico} proves all these identities simultaneously.
\qede\end{remark}

For increasing values of $\ell$, the summands in the left-hand side~of~\eqref{eq:inico} 
expand to
\begin{alignat*}{12}
& \ell=0: \quad && (1-\Lbb^2)^{\frac 1\Lbb} && = && 
1 && -\, &&\Lbb \,+\, && \frac 12 && \Lbb^2-\, && \frac 23 && \Lbb^3 && +\frac{13}{24} && \Lbb^4
+\cdots\\
& \ell=1: \quad && (1-\Lbb^2)^{\frac 2\Lbb-1}\Lbb && = && 
&&  && \Lbb \,-\, && 2 && \Lbb^2+\, && 3 && \Lbb^3 && -\frac {13}3 && \Lbb^4 + \cdots\\
& \ell=2: \quad && (1-\Lbb^2)^{\frac 3\Lbb-2}\frac{3-\Lbb}2 \Lbb^2 && = && 
&& && && \frac 32 && \Lbb^2 -\, && 5 && \Lbb^3 && +\frac{45}4 && \Lbb^4 + \cdots \\
& \ell=3: \quad && (1-\Lbb^2)^{\frac 4\Lbb-3}\frac{4-\Lbb}3\frac{4-2\Lbb}2 \Lbb^3 && = && 
&& && && && && \frac 83 && \Lbb^3 && -\frac{38}3 && \Lbb^4 + \cdots 
\end{alignat*}
The sum of these terms is a power series, and the task is to verify that this series is
the constant $1$. For this, it suffices to prove that the series has a limit of~$1$ for 
$\Lbb=\frac 1m$ for all integers~$m>1$. Setting $\Lbb=\frac 1m$ with $m>1$ an integer, 
the left-hand side of~\eqref{eq:inico} may be written
\begin{multline*}
\sum_{\ell\ge 0} \frac{(\ell+1)^\ell}{(\ell+1)!} 
\left(1-\frac 1{m^2}\right)^{(\ell+1)m-\ell}\cdot
\prod_{j=0}^{\ell-1} \left(1-\frac{j}{m(\ell+1)}\right) \frac 1{m^\ell} \\
= \left(1-\frac 1{m^2}\right)^m
\sum_{\ell\ge 0} 
\left(\frac {(m^2-1)^{m-1}}{m^{2m}}\right)^\ell 
\binom{m(\ell+1)}\ell \frac 1{\ell+1}
\end{multline*}
after elementary manipulations.
Therefore, the following assertion holds.

\begin{claim}
In order to verify~\eqref{eq:inico}, it suffices to verify that the identity
\begin{equation}\label{eq:binid2}
\sum_{\ell\ge 0} 
\frac 1{\ell+1}\binom{m(\ell+1)}\ell 
\left(\frac {(m^2-1)^{m-1}}{m^{2m}}\right)^\ell 
 = \left(1-\frac 1{m^2}\right)^{-m}
\end{equation}
holds for all integers $m>1$.
\end{claim}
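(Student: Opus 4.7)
The plan is to justify the reduction in two stages: first an analytic continuation argument showing that it suffices to check (\ref{eq:inico}) on a set of values of $\Lbb$ accumulating at zero, then an elementary algebraic simplification converting the specialization $\Lbb=1/m$ of (\ref{eq:inico}) into the binomial identity (\ref{eq:binid2}).

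For the first stage, I would begin by observing that each summand on the left-hand side of (\ref{eq:inico}) is a well-defined formal power series in $\Lbb$: the $\ell$-th summand is divisible by $\Lbb^\ell$ (because of the explicit $\Lbb^\ell$ factor, together with the fact that $\log(1-\Lbb^2)/\Lbb = -\Lbb - \Lbb^3/2 - \cdots$ has no pole at $\Lbb=0$), so the infinite sum is well-defined coefficient-by-coefficient. Moreover, for $|\Lbb|$ sufficiently small each summand is analytic in $\Lbb$, and the partial sums converge to an analytic function on a neighborhood of $0$. Since $\{1/m : m \in \mathbb{Z}_{>1}\}$ accumulates at $0$, the identity theorem then implies that any analytic function in this neighborhood that equals $1$ at each $\Lbb = 1/m$ is identically $1$; in particular its Taylor series at $0$, which coincides with the formal power series in (\ref{eq:inico}), is identically $1$.

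For the second stage, I would specialize to $\Lbb = 1/m$ and verify the algebraic reduction. The key simplifications are
\[
(1-\Lbb^2)^{(1+\ell)/\Lbb - \ell} = \left(1 - \frac{1}{m^2}\right)^{m(\ell+1) - \ell}
\]
and
\[
\prod_{j=0}^{\ell-1}\left(1 - \frac{j\Lbb}{\ell+1}\right) = \frac{\prod_{j=0}^{\ell-1}(m(\ell+1) - j)}{(m(\ell+1))^\ell}\saf.
\]
Combining these with the factor $\frac{(\ell+1)^\ell}{(\ell+1)!}\Lbb^\ell = \frac{(\ell+1)^\ell}{(\ell+1)!\,m^\ell}$ and using the identity $\frac{(m(\ell+1))!}{(\ell+1)!(m(\ell+1)-\ell)!} = \frac{1}{\ell+1}\binom{m(\ell+1)}{\ell}$, the algebraic part of the $\ell$-th summand collapses to $\frac{1}{\ell+1}\binom{m(\ell+1)}{\ell}$, while the exponential part factors as $(1-1/m^2)^m \cdot \bigl((m^2-1)^{m-1}/m^{2m}\bigr)^\ell$. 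Dividing both sides of (\ref{eq:inico}) by $(1-1/m^2)^m$ produces exactly (\ref{eq:binid2}).

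The main technical concern is convergence of the series in (\ref{eq:binid2}) for each integer $m > 1$, required both so that the specialization at $\Lbb = 1/m$ is meaningful and so that the identity theorem step applies. A Stirling estimate gives $\binom{m(\ell+1)}{\ell}^{1/\ell} \to m^m/(m-1)^{m-1}$ as $\ell \to \infty$, so the ratio of consecutive terms tends to $(m+1)^{m-1}/m^m$, which is strictly less than $1$ for every integer $m \geq 2$ (for instance, $\le e/m$ for $m\ge 2$, and $3/4$ for $m=2$). Hence the series converges, the specialization is legitimate, and the reduction is complete.
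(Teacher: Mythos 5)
Your proposal is correct and follows essentially the same route as the paper: specialize the formal power series identity at $\Lbb=1/m$, invoke the accumulation of the points $1/m$ at $0$ to reduce to these specializations, and carry out the elementary algebraic collapse of the $\ell$-th summand to $\frac{1}{\ell+1}\binom{m(\ell+1)}{\ell}\bigl((m^2-1)^{m-1}/m^{2m}\bigr)^\ell$ times the common factor $(1-1/m^2)^m$. You are in fact more explicit than the paper about the analytic justification (uniform convergence near $0$, the identity theorem, and convergence of the specialized series via the ratio test), which the paper leaves implicit.
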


Identity~\eqref{eq:binid2} follows from Lemma~\ref{lem:binid2}, which states
\[
x^\alpha = \sum_{\ell\ge 0} \frac \alpha {\alpha+\ell\beta}\binom{\alpha+ \ell\beta}\ell y^\ell
\]
for all positive integers $\alpha$, $\beta$, with $y=(x-1)x^{-\beta}$.
Indeed, setting $\alpha=\beta=m$, this identity gives
\[
x^m = \sum_{\ell\ge 0} \frac 1 {\ell+1}\binom{m(\ell+1)}\ell y^\ell\saf,
\]
and specializing to $x=\left(1-\frac 1{m^2}\right)^{-1}$ yields~\eqref{eq:binid2} as needed.
This concludes the proof of~\eqref{eq:inico}.\smallskip

Summarizing, we have proved that $\oM$ and $\hM$ satisfy the same differential equation
and initial conditions. Therefore $\hM=\oM$, confirming the first assertion in
Theorem~\ref{thm:main}.
\qed\smallskip

In order to complete the proof of Theorem~\ref{thm:main}, we need to verify that $\oM$
also equals the second expression in that statement:
\begin{equation}\label{eq:mainsim}
\oM=\sum_{\ell\ge 0}\sum_{k\ge 0}\frac{(\ell+1)^{\ell+k}}{(\ell+1)! k!}
(z-\Lbb-z\Lbb)^k \prod_{j=0}^{\ell+k-1} \left(1-\frac{j\Lbb}{\ell+1}\right) \Lbb^\ell\saf.
\end{equation}
For this, use the expansion
\[
(1+x)^\alpha = 1 + \alpha x + \frac{\alpha(\alpha-1)}2 x^2 
+ \frac{\alpha(\alpha-1)(\alpha-2)}{3!} x^3 + \cdots 
\]
to obtain
\begin{align*}
((1-\Lbb)(1+(z+1)\Lbb))^{\frac{\ell+1}\Lbb-\ell} 
&= (1+\Lbb(z-\Lbb-\Lbb z))^{\frac{\ell+1}\Lbb-\ell} \\
&=\sum_{k\ge 0} \frac 1{k!} (z-\Lbb-\Lbb z)^k 
\prod_{j=0}^{k-1}\left((1+\ell)-(\ell+j)\Lbb\right) \\
&=\sum_{k\ge 0} \frac {(\ell+1)^k}{k!} (z-\Lbb-\Lbb z)^k 
\prod_{j=0}^{k-1}\left(1-\frac{(\ell+j)\Lbb}{\ell+1}\right) \saf.
\end{align*}
Making use of this expression in~\eqref{eq:Mgfonly} yields
\[
\oM=
\sum_{\ell\ge 0}\sum_{k\ge 0}\frac{(\ell+1)^{\ell+k}}{(\ell+1)!k!}(z(1-\Lbb)-\Lbb)^k
\prod_{j=0}^{k-1}\left(1-\frac{(\ell+j)\Lbb}{\ell+1}\right)
\prod_{j=0}^{\ell-1}\left(1-\frac{j\Lbb}{\ell+1}\right)\Lbb^\ell
\]
and~\eqref{eq:mainsim} follows.
\qed

\begin{remark}
An alternative proof of Theorem~\ref{thm:main} can be obtained by using a different 
characterization of the generating function~$\hM$. Manin (\cite[Theorem~0.3.1, 
(0.7)]{MR1363064}) and Getzler (\cite[Theorem~5.9]{MR1363058}) prove that 
$\hM$ is the only solution of the functional equation
\begin{equation}\label{eq:funeqM}
\hM^\Lbb=\Lbb^2\hM + (1-\Lbb)(1+(z+1)\Lbb)\saf.
\end{equation}
It can be verified that the function~$\oM$ satisfies this equation, and this implies
$\oM=\hM$. As in the proof given above, the argument ultimately relies on
Lemma~\ref{lem:binid2}. 
\qede\end{remark}


\section{Grothendieck class and Betti numbers, and proof of 
Theorem~\ref{thm:M0nbarGC}}\label{sec:PPbettiproof}

Corollaries~\ref{cor:PP} and~\ref{cor:betticom} follow directly from the first expression 
shown in Theorem~\ref{thm:main}.

\begin{proof}[Proof of Corollary~\ref{cor:PP}]
According to Theorem~\ref{thm:main},
\[
1+z+\sum_{n\ge 3} [\ocaM_{0,n}] \frac{z^{n-1}}{(n-1)!} 
=\sum_{\ell\ge 0} \frac{(\ell+1)^\ell}{(\ell+1)!} 
\left((1-\Lbb)(1+(z+1)\Lbb)\right)^{\frac {1+\ell(1-\Lbb)}{\Lbb}}
\prod_{j=0}^{\ell-1} \left(1-\frac{j\Lbb}{\ell+1}\right) \Lbb^\ell\saf.
\]
Thus, the class $[\ocaM_{0,n}]$ may be obtained by setting $z=0$ in the $(n-1)$-st 
derivative of this expression with respect to $z$. A simple induction proves that
\[
\frac{d^{n-1}}{dz^{n-1}} \left((1+(z+1)\Lbb)^{\frac {1+\ell}\Lbb - \ell}\right)
=(1+(z+1)\Lbb)^{\frac {1+\ell}\Lbb - \ell-n+1}\prod_{j=0}^{n-2} (1+\ell-(\ell+j)\Lbb)\saf,
\]
and hence
\[
\left.\frac{d^{n-1}}{dz^{n-1}} \left((1+(z+1)\Lbb)^{\frac {1+\ell}\Lbb - \ell}\right)\right|_{z=0}
=(1+\Lbb)^{\frac {1+\ell}\Lbb - \ell-n+1}(\ell+1)^{n-1}\prod_{j=0}^{n-2} 
\left(1-\frac{(\ell+j)\Lbb}{\ell+1}\right)\saf.
\]
It follows that $[\ocaM_{0,n}]$ equals
\begin{equation}\label{eq:M0ninp}
\sum_{\ell\ge 0} \frac{(\ell+1)^{\ell+n-1}}{(\ell+1)!}
(1-\Lbb)^{\frac{1+\ell}\Lbb-\ell} 
(1+\Lbb)^{\frac {1+\ell}\Lbb - \ell-n+1}
\prod_{j=0}^{\ell+n-2} \left(1-\frac{j\Lbb}{\ell+1}\right)
\Lbb^\ell
\end{equation}
and the expression stated in Corollary~\ref{cor:PP} follows.
\end{proof}

\begin{proof}[Proof of Corollary~\ref{cor:betticom}]
The rank of $H^{2\ell}(\ocaM_{0,n})$ is the coefficient of $\Lbb^\ell$ 
in~\eqref{eq:M0ninp}. 
The logarithms of the individual factors in each summand in~\eqref{eq:M0ninp}
expand as follows.

$\bullet$ $\log\left((1-\Lbb)^{\frac{1+ k}\Lbb- k}\right)$:
\begin{align*}
\left(\frac{1+ k}\Lbb- k\right) \log(1-\Lbb) 
&=
-\sum_{i\ge 0} (1+ k) \frac{\Lbb^i}{i+1}+\sum_{i\ge 0}  k \frac{\Lbb^{i+1}}{i+1} \\
&=
-(1+ k)+\sum_{i\ge 1} \frac{ k-i}{i(i+1)} \Lbb^i
\saf.
\end{align*}

$\bullet$ $\log\left((1+\Lbb)^{\frac {1+ k}\Lbb -  k-n+1}\right)$:
\begin{align*}
\left(\frac {1+ k}\Lbb -  k-n+1\right) \log(1+\Lbb) 
&=(1+ k)\sum_{i\ge 0}(-1)^i\frac{\Lbb^i}{i+1}
-( k+n-1) \sum_{i\ge 0}(-1)^i\frac{\Lbb^{i+1}}{i+1} \\
&=
(1+ k)+\sum_{i\ge 1} (-1)^i\frac{2 k i+n i+ k+n-1}{i(i+1)} \Lbb^i 
\saf.
\end{align*}

$\bullet$ $\log\left(\prod_{j=0}^{ k+n-2} \left(1-\frac{j\Lbb}{ k+1}\right)\right)$:
\[
-\sum_{j=0}^{ k+n-2}\sum_{i\ge 0} \frac{j^{i+1}\Lbb^{i+1}}{(i+1)( k+1)^{i+1}}
=-\sum_{i\ge 1} \left(\sum_{j=0}^{ k+n-2} j^i\right) 
\frac{\Lbb^i}{i( k+1)^i}\saf.
\]
Therefore, the product of the three factors is the exponential of
$\sum_{i\ge 1} C_{n k i} \Lbb^i$
where for $n\ge 0$, $ k\ge 0$, $i\ge 1$ we set
\begin{equation}\label{eq:Cnlidef}
C_{n k i}:=\frac{(-1)^i (2 k i+ni+ k +n-1)+ k -i}{i(i+1)}
-\frac 1{i( k +1)^i}\sum_{j=0}^{ k +n-2} j^i\saf.
\end{equation}
With this notation,
\begin{align*}
[\ocaM_{0,n}] &=\sum_{\ell\ge 0} \frac{(\ell+1)^{\ell+n-1}}{(\ell+1)!}
\exp\left(\sum_{i\ge 1} C_{n\ell i} \Lbb^i\right) \Lbb^\ell \\
&=\sum_{\ell\ge 0} \frac{(\ell+1)^{\ell+n-1}}{(\ell+1)!}
\sum_{m\ge 0} \frac 1{m!} \left(\sum_{i\ge 1} C_{n\ell i} \Lbb^i\right)^m \Lbb^\ell
\saf.
\end{align*}
We have
\[
\left(\sum_{i\ge 1} C_{nk i} \Lbb^i\right)^m
=\sum_{r\ge 0} \left(\sum_{i_1+\cdots + i_m=r} C_{nki_1}\cdots C_{nki_m}\right)\Lbb^r
\]
where the indices $i_j$ in the summation are positive integers. The stated 
formula~\eqref{eq:betti}
\[
\rk H^{2\ell}(\ocaM_{0,n})
=\sum_{k=0}^\ell \frac{(k+1)^{k+n-1}}{(k+1)!}
\sum_{m=0}^{\ell-k} \frac 1{m!} 
\sum_{i_1+\cdots+i_m=\ell-k} C_{nki_1}\cdots C_{nki_m}
\]
follows.
\end{proof}

\begin{remark}
In parsing this formula, it is important to keep in mind that the indices $i_j$ in the
summation are positive. For example, when $k=\ell$, i.e., $\ell-k=0$, there is
exactly one contribution to the last $\sum$, that is, the empty choice of indices ($m=0$);
that summand is the empty product of the coefficients $C_{nki}$, that is, $1$. 
For instance, $\rk H^0(\ocaM_{0,n})=1$ for all~$n$.

By contrast, if $m>\ell-k$, then the last $\sum$ is the empty sum, so the contribution 
of such terms is~$0$. This is why we can bound $m$ by $\ell-k$ in the range of
summation.
\qede\end{remark}

The classical {\em Faulhaber's formula\/} states that for $i\ge 1$
\[
\sum _{j=0}^{N}j^{i}={\frac {1}{i+1}}\sum _{j=0}^{i}{\binom {i+1}{j}}B_{j}N^{i-j+1}
\]
where $B_j$ denote the Bernoulli numbers (with the convention $B_1=\frac 12$).
Applying Faulhaber's formula turns~\eqref{eq:Cnlidef} into
\begin{multline*}
C_{nk i}=\frac 1{i(i+1)}\bigg(
(-1)^i(2ki+ni+k+n-1)+k-i \\
-\frac 1{(k+1)^i}
\sum_{j=0}^i \binom{i+1}j B_j(k+n-1)^{i-j+1}
\bigg)\saf.
\end{multline*}
as stated in the introduction.

\begin{proof}[Proof of Theorem~\ref{thm:M0nbarGC}]
To prove Theorem~\ref{thm:M0nbarGC}, we use the second expression for $\hM$ obtained
in Theorem~\ref{thm:main}:
\[
\hM=\sum_{\ell\ge 0}\sum_{k\ge 0}\frac{(\ell+1)^{\ell+k}}{(\ell+1)! k!}
(z-\Lbb-z\Lbb)^k \prod_{j=0}^{\ell+k-1} \left(1-\frac{j\Lbb}{\ell+1}\right) \Lbb^\ell\saf.
\] 
The class is obtained by setting $z=0$ in the $(n-1)$-st derivative with respect to $z$.
This straightforward operation yields
\[
(1-\Lbb)^{n-1}\sum_{\ell\ge 0}\sum_{k\ge n-1}\frac{(\ell+1)^{\ell+k}}{(\ell+1)! (k-n+1)!}
(-\Lbb)^{k-n+1} \prod_{j=0}^{\ell+k-1} \left(1-\frac{j\Lbb}{\ell+1}\right) \Lbb^\ell
\]
and therefore, after simple manipulations,
\[
[\ocaM_{0,n}]=(1-\Lbb)^{n-1}
\sum_{k\ge 0}\sum_{\ell= 0}^k \frac{(-1)^{k-\ell} (\ell+1)^{k+n-1}}{(\ell+1)! (k-\ell)!}
\prod_{j=0}^{k+n-2} \left(1-\frac{j\Lbb}{\ell+1}\right) \Lbb^k\saf.
\]
Now recall that the Stirling numbers of the first kind, $s(N,j)$, are defined by the identity
\[
\sum_{j=0}^N s(N,j) x^j = \prod_{j=0}^{N-1} (x-j) = x (x-1)\cdots (x-N+1)\saf.
\]
Setting $y=\frac 1x$ and clearing denominators shows that
\[
\prod_{j=0}^{N-1} (1-jy) = \sum_{j=0}^N s(N,N-j) y^j\saf.
\]
We then have
\begin{align*}
\prod_{j=0}^{k+n-2} \left(1-\frac{j\Lbb}{\ell+1}\right) 
&= \sum_{j=0}^{k+n-1} s(k+n-1,k+n-1-j) \left(\frac{\Lbb}{\ell+1}\right)^j
\end{align*}
yielding
\[
[\ocaM_{0,n}]=(1-\Lbb)^{n-1}
\sum_{k\ge 0}\sum_{j=0}^{k+n-1} \sum_{\ell= 0}^k 
\frac{(-1)^{k-\ell} (\ell+1)^{k+n-1-j} }{(\ell+1)! (k-\ell)!} s(k+n-1,k+n-1-j)\Lbb^{k+j}\saf.
\]
Next, recall that the Stirling numbers of the second kind, $S(N,r)$, are defined by
\begin{equation}\label{eq:Stir2def}
S(N,r)=\sum_{i=1}^r (-1)^{r-i} \frac{i^N}{i!(r-i)!}
\end{equation}
for all $N>0$, $r\ge 0$.
Therefore
\[
\sum_{\ell= 0}^k (-1)^{k-\ell} \frac{(\ell+1)^{k+n-1-j} }{(\ell+1)! (k-\ell)!} 
= \sum_{i=1}^{k+1} (-1)^{k+1-i} \frac{i^{k+n-1-j} }{i! (k+1-i)!} 
=S(k+n-1-j,k+1)
\]
and we can conclude
\[
[\ocaM_{0,n}]=(1-\Lbb)^{n-1}
\sum_{k\ge 0}\sum_{j=0}^{k+n-1} S(k+n-1-j,k+1) s(k+n-1,k+n-1-j)\Lbb^{k+j}\saf,
\]
which is the statement.
\end{proof}

\begin{remark}
Since $S(N,r)=0$ if $r>N$, nonzero summands in this expression only occur for $0\le j\le n-2$.
\qede\end{remark}

Corollary~\ref{cor:bettisim} follows immediately from Theorem~\ref{thm:M0nbarGC}, as
the reader may verify.


\section{The generating function $P$, and proof of Theorem~\ref{thm:main2}}\label{sec:polp}

We now move to the polynomials $p^{(k)}_{m}(z)\in \Qbb[z]$ mentioned in~\S\ref{sec:intro}. 
These polynomials were introduced in~\cite{ACM} for the purpose of studying the
generating functions for individual Betti numbers of $\ocaM_{0,n}$. As proved in~\cite{ACM}, for all $m\ge 0$ and $0\le m\le k$ there exist polynomials $p_m^{(k)}(z)\in \Qbb[z]$ 
of degree $2m$ such that
\begin{equation}\label{eq:Hkfromp}
\sum_{n\ge 3} \rk H^{2k}(\ocaM_{0,n})\frac{z^{n-1}}{(n-1)!} 
= e^z\sum_{m=0}^k (-1)^m p_m^{(k)}(z)\, e^{(k-m) z}
\end{equation}
for all $k\ge 0$. 
Let
\[
P(z,t,u):=\sum_{\ell\ge 0}\sum_{m\ge 0} p^{(m+\ell)}_m(z) u^m t^\ell
\]
be the generating function for the polynomials $p^{(k)}_m$; then \eqref{eq:Hkfromp} 
states that
\begin{equation}\label{eq:MfromP}
\hM(z,\Lbb) = e^z P(z,e^z\Lbb,-\Lbb)\saf.
\end{equation}
In this section we prove Theorem~\ref{thm:main2} from the introduction, which 
gives an explicit expression for the generating function $P$:
\[
P(z,t,u) =\sum_{\ell\ge 0} \left(
\frac{(\ell+1)^\ell}{(\ell+1)!} e^{-(\ell+1)z}\,
\left((1+u)(1-u(z+1))\right)^{-\frac{1+\ell(u+1)}u}\,
\prod_{j=0}^{\ell-1} \left(1+\frac {ju}{\ell+1}\right)\right) t^\ell\saf.
\]
This statement will follow from Theorem~\ref{thm:main}, but the argument is not
completely trivial for the mundane reason that~$\hM(z,\Lbb)$ is a two-variable function 
while $P(z,t,u)$ is a three-variable function; $\hM(z,\Lbb)$~is a specialization of $P(z,t,u)$, 
not conversely.

\begin{proof}[Proof of~Theorem~\ref{thm:main2}]
We let
\[
\oP(z,t,u) =\sum_{\ell\ge 0} \left(
\frac{(\ell+1)^\ell}{(\ell+1)!} e^{-(\ell+1)z}\,
\left((1+u)(1-u(z+1))\right)^{-\frac{1+\ell(u+1)}u}\,
\prod_{j=0}^{\ell-1} \left(1+\frac {ju}{\ell+1}\right)\right) t^\ell
\]
and we have to prove that $\oP(z,t,u)=P(z,t,u)$. Equivalently, we will prove that
\begin{equation}\label{eq:oPoP}
\oP(z,su,-u)=P(z,su,-u)\saf;
\end{equation}
note that the right-hand side equals 
$\sum_{k\ge 0} \sum_{m=0}^k (-1)^m p^{(k)}_m s^{k-m} u^k$.

\begin{claim}\label{claim:pkzspol}
There exist {\em polynomials\/} $\overline p^{(k)}(z,s)\in \Qbb[z,s]$ such that
\[
\oP(z,su,-u)=\sum_{k\ge 0} \overline p^{(k)}(z,s) u^k\saf.
\]
\end{claim}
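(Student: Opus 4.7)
The plan is to read off the coefficient of $u^k$ in $\oP(z,su,-u)$ directly from the series definition of $\oP$. After the substitutions $t\mapsto su$ and $u\mapsto -u$, the $\ell$-th summand acquires a factor $(su)^\ell = s^\ell u^\ell$, so only indices $\ell\le k$ contribute to the coefficient of $u^k$. The problem reduces to showing that each individual summand, with $s^\ell u^\ell$ factored out, is a power series in $u$ whose coefficients lie in $\Qbb[z]$.

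The key step concerns the combination of the exponential prefactor $e^{-(\ell+1)z}$ with the fractional-power factor, which after the substitution takes the form $B(z,u)^{(\ell+1)/u-\ell}$, where
\[
B(z,u) = (1-u)\bigl(1+u(z+1)\bigr) = 1 + uz - u^2(z+1)\saf.
\]
The decisive observation is that the linear-in-$u$ term of $\log B(z,u)$ is exactly~$z$: writing $\log B(z,u) = uz + u^2 h(z,u)$ with $h\in\Qbb[z][[u]]$, one obtains
\[
B(z,u)^{(\ell+1)/u} = \exp\!\left(\tfrac{\ell+1}{u}\log B(z,u)\right) = e^{(\ell+1)z}\cdot \exp\!\bigl((\ell+1)\,u\,h(z,u)\bigr)\saf,
\]
so that
\[
e^{-(\ell+1)z}\, B(z,u)^{(\ell+1)/u-\ell} = \exp\!\bigl((\ell+1)\,u\,h(z,u)\bigr)\cdot B(z,u)^{-\ell}\saf.
\]
Both factors on the right expand as power series in $u$ with coefficients in $\Qbb[z]$.

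With this cancellation in place, the remaining factor $\prod_{j=0}^{\ell-1}(1 - ju/(\ell+1))$ is a polynomial in $u$ with rational coefficients, so the $\ell$-th summand of $\oP(z,su,-u)$ is $s^\ell u^\ell$ times a power series in $u$ with coefficients in $\Qbb[z]$. Summing over $\ell\le k$ recovers the coefficient of $u^k$ in $\oP(z,su,-u)$ as a polynomial in $\Qbb[z,s]$, which is the required statement. The only real obstacle, and the only nontrivial step, is spotting that $e^{-(\ell+1)z}$ cancels exactly against the $u^0$-term of $B(z,u)^{(\ell+1)/u}$; this is precisely the reason the factor $e^{-(\ell+1)z}$ appears in the definition of $\oP$, and it is what makes the specialization $\oP(z,e^z\Lbb,-\Lbb)$ correspond to $e^{-z}\oM(z,\Lbb)$ term by term.
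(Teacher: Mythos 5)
Your argument is correct and runs parallel to the paper's proof, differing only in the technical decomposition used to tame the apparent pole at $u=0$ in the exponent $(\ell+1)/u-\ell$. The paper separates $B(z,u)=(1-u)(1+u(z+1))$ into its two factors, peels off the $z$-free piece $(1-u)^{(\ell+1)/u-\ell}$, and explicitly expands the logarithm of the remaining factor $e^{-(\ell+1)z}(1+u(z+1))^{(\ell+1)/u-\ell}$ to read off that each $u$-coefficient is a polynomial in $z$. A subtle feature of that splitting is that the two pieces individually carry the transcendental constant terms $e^{\mp(\ell+1)}$ (coming from $(1-u)^{(\ell+1)/u}\to e^{-(\ell+1)}$ and $e^{-(\ell+1)z}(1+u(z+1))^{(\ell+1)/u}\to e^{\ell+1}$ as $u\to 0$), so rationality of the coefficients becomes visible only after the pieces are recombined. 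Your version keeps $B$ intact and observes that the $u$-linear term of $\log B$ is exactly $z$, giving $e^{-(\ell+1)z}B^{(\ell+1)/u-\ell}=\exp\bigl((\ell+1)\,u\,h(z,u)\bigr)\cdot B^{-\ell}$ with both factors manifestly in $\Qbb[z][[u]]$; no transcendental scalars appear at any stage, so the rationality demanded by the claim, $\overline p^{(k)}\in\Qbb[z,s]$, is immediate rather than implicit. The trade-off is that $B^{-\ell}$ is a genuine power series in $u$ rather than a polynomial, but that costs nothing here. Overall this is a slightly cleaner realization of the same cancellation, and, as you note, it is exactly the cancellation that explains why the prefactor $e^{-(\ell+1)z}$ is built into the definition of $\oP$.
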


This claim implies~\eqref{eq:oPoP}. Indeed, 
by Theorem~\ref{thm:main} and identity~\eqref{eq:Hkfromp} we have 
\[
\sum_{k\ge 0} \overline p^{(k)}(z,e^z) \Lbb^k =
\oP(z,e^z \Lbb,-\Lbb) = e^{-z} \hM(z,\Lbb)=
\sum_{k\ge 0} \sum_{m=0}^k (-1)^m  p^{(k)}_m(z) e^{(k-m) z} \Lbb^k
\]
and therefore
\begin{equation}\label{eq:opp}
\overline p^{(k)}(z,e^z) = \sum_{m=0}^k (-1)^m  p^{(k)}_m(z) e^{(k-m) z}
\end{equation}
for all $k\ge 0$. Given that Claim~\ref{claim:pkzspol} holds,
\[
\overline p^{(k)}(z,s) - \sum_{m=0}^k (-1)^m  p^{(k)}_m(z) s^{(k-m)}
\]
is then a {\em polynomial\/} vanishing at $s=e^z$, hence it must be identically $0$ 
since $e^z$ is transcendental over $\Qbb(z)$. The needed identity~\eqref{eq:oPoP} follows.

Thus, we are reduced to verifying~Claim~\ref{claim:pkzspol}.

By definition,
\[
\oP(z,su,-u)=\sum_{\ell\ge 0} \left(
\frac{(\ell+1)^\ell}{(\ell+1)!} e^{-(\ell+1)z}\,
\left((1-u)(1+u(z+1))\right)^{\frac{1+\ell(1-u)}u}\,
\prod_{j=0}^{\ell-1} \left(1-\frac {ju}{\ell+1}\right)\right) s^\ell u^\ell\saf.
\]
In order to prove~Claim~\ref{claim:pkzspol}, we have to verify that for all $k\ge 0$
the coefficient of $u^k$ in
\[
\sum_{\ell= 0}^k \left(
\frac{(\ell+1)^\ell}{(\ell+1)!} e^{-(\ell+1)z}\,
\left((1-u)(1+u(z+1))\right)^{\frac{1+\ell(1-u)}u}\,
\prod_{j=0}^{\ell-1} \left(1-\frac {ju}{\ell+1}\right)\right) s^\ell u^\ell
\]
is a polynomial in $z$ and $s$.
This is clearly a polynomial in $s$, and it suffices then to verify that the coefficient 
of $u^i$ in the factor
\[
e^{-(\ell+1)z} (1+u(z+1))^{\frac {1+\ell(1-u)}u}
\]
is a polynomial in $z$ for all $i\ge 0$. Simple manipulations show that this factor equals
\begin{multline*}
\exp \left(\frac{1+\ell(1-u)}u\log(1+u(z+1)) -(1+\ell)z\right) \\
=\sum_{k\ge 0} \frac 1{k!}\left((1+\ell)+\sum_{i\ge 0} \left(\frac{(1+\ell)(z+1)}{i+2}
+  \frac{\ell}{i+1}\right)(-u(z+1))^{i+1} \right)^k
\end{multline*}
and the statement is clear from this expression. (In fact, the coefficient of $u^i$ in
this expression is a polynomial of degree $2i$ in $z$.)
This concludes the proof of Claim~\ref{claim:pkzspol}, and therefore of 
Theorem~\ref{thm:main2}.
\end{proof}

Of course the same result may be formulated in different ways. The following 
expression follows from Theorem~\ref{thm:main2} by manipulations analogous to
the corresponding manipulations in the proof of Theorem~\ref{thm:main}.

\begin{corol}
\[
P(z,t,u) = \sum_{\ell\ge 0}\sum_{k\ge 0}\left(
\frac{(\ell+1)^{\ell+k}}{(\ell+1)! k!} (z+u+zu)^k 
\prod_{j=0}^{\ell+k-1} \left(1 + \frac{ju}{\ell+1}\right)\right)
e^{-(\ell+1)z} t^\ell\saf.
\]
\end{corol}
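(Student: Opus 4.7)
The plan is to derive this expression directly from Theorem~\ref{thm:main2} by applying the generalized binomial expansion, in exact analogy with the passage from the first to the second expression of~$\hM$ in the proof of Theorem~\ref{thm:main}. The starting point is the formula established in Theorem~\ref{thm:main2}, and the first step is the algebraic factorization
\[
(1+u)(1-u(z+1)) = 1 - u(z+u+uz),
\]
which one verifies by routine expansion. This plays the role that the identity $(1-\Lbb)(1+(z+1)\Lbb) = 1+\Lbb(z-\Lbb-\Lbb z)$ played in the proof of Theorem~\ref{thm:main}.

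Next I would expand
\[
((1+u)(1-u(z+1)))^{-\frac{1+\ell(u+1)}{u}} = (1-u(z+u+uz))^{\alpha}
\]
with $\alpha := -\tfrac{1+\ell(u+1)}{u}$, via $(1+x)^{\alpha} = \sum_{k\ge 0}\tfrac{1}{k!}\prod_{j=0}^{k-1}(\alpha-j)\,x^k$. The decisive simplification is that each binomial factor combines with a copy of $-u$ into
\[
(\alpha - j)(-u) \;=\; (\ell+1) + (\ell+j)u \;=\; (\ell+1)\left(1+\frac{(\ell+j)u}{\ell+1}\right),
\]
so that, after absorbing the signs and factors of $u$ from $x^k = (-u)^k(z+u+uz)^k$ into the product, one obtains
\[
(1-u(z+u+uz))^{\alpha}
= \sum_{k\ge 0} \frac{(\ell+1)^k}{k!}(z+u+uz)^k\prod_{j=0}^{k-1}\left(1+\frac{(\ell+j)u}{\ell+1}\right).
\]

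Substituting this back into Theorem~\ref{thm:main2} and invoking the telescoping identity
\[
\prod_{j=0}^{\ell-1}\left(1+\frac{ju}{\ell+1}\right)\cdot \prod_{j=0}^{k-1}\left(1+\frac{(\ell+j)u}{\ell+1}\right) = \prod_{j=0}^{\ell+k-1}\left(1+\frac{ju}{\ell+1}\right)
\]
collapses the two finite products into the single product on the right-hand side of the claim, and reorganising the coefficients yields the stated double-series expression for $P(z,t,u)$. I do not anticipate a genuine obstacle: every step is a direct algebraic manipulation, and the entire argument parallels the corresponding computation at the end of~\S\ref{sec:Mproof}, with the prefactor $e^{-(\ell+1)z}$ carried along as a spectator throughout and the substitution $\Lbb \leftrightarrow -u$ accounting for all sign changes.
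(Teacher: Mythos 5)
Your proposal is correct and matches the paper's proof exactly: the paper proves this corollary by remarking that it follows from Theorem~\ref{thm:main2} by the same manipulations used to derive the second expression for $\hM$ from the first at the end of \S\ref{sec:Mproof}, which is precisely the factorization $(1+u)(1-u(z+1))=1-u(z+u+uz)$, binomial expansion of the exponent $\alpha=-\tfrac{1+\ell(u+1)}{u}$, absorption of the $(-u)^k$ into the product of binomial factors, and concatenation of the two finite products that you carry out. All the intermediate identities you state check out, including $(\alpha-j)(-u)=(\ell+1)+(\ell+j)u$.
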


\begin{remark}
It is straightforward to verify that the function $P(z,t,u)$ is a solution of the differential equation
\[
\frac{\partial P}{\partial z} + t \frac{\partial P}{\partial t} +P
=\frac P{1-u(1+z)-t  P}\saf.
\]
This is the analogue for $P$ of the differential equation~\eqref{eq:Mdifeq} satisfied
by $\hM$.

It is also not difficult to obtain a functional equation for $P$,
\[
(e^z P)^{-u} + u t P = (1+u)(1-u(z+1))\saf,
\]
lifting the functional equation~\eqref{eq:funeqM} satisfied by $\hM$.
\qede\end{remark}


\section{The polynomials $p^{(k)}_m$ and $\Gamma_{mj}$, and proof of 
Theorems~\ref{thm:pcG} and~\ref{thm:betti2}}\label{sec:pkm}

We are interested in studying more thoroughly the polynomials $p^{(k)}_m(z)\in \Qbb[z]$ 
defined in~\S\ref{sec:polp}. As proved in~\cite{ACM}, $p^{(k)}_m$ has degree~$2m$ and 
positive leading coefficient, and these polynomials determine the Betti 
numbers of $\ocaM_{0,n}$ in the sense that 
\[
\sum_{n\ge 3} \rk H^{2k}(\ocaM_{0,n})\frac{z^{n-1}}{(n-1)!} 
= e^z\sum_{m=0}^k (-1)^m p_m^{(k)}(z)\, e^{(k-m) z}
\]
(cf.~\eqref{eq:Hkfromp}).
For instance, it follows that for every $k\ge 0$ the sequence of Betti numbers 
$\rk H^{2k}(\ocaM_{0,n})$ as $n=3,4,5,\dots$ is determined by a finite amount of information, 
specifically, the $(k+1)^2$ coefficients of the polynomials $p^{(k)}_m$, $m=0,\dots,k$. 
It is hoped that more information about the polynomials 
$p^{(k)}_m$ will help in proving conjectured facts about the integers $\rk H^{2k}(\ocaM_{0,n})$, 
such as log-concavity properties.

The following list of the first several polynomials $p^{(k)}_m$ is reproduced 
from~\cite{ACM}.
\begin{align*}
p_0^{(0)} &=1 \\[10pt]
p_0^{(1)} &=1 \\
p_1^{(1)} &=\frac 12 z^2 + z + 1 \\[10pt]
p_0^{(2)} &=\frac 32 \\
p_1^{(2)} &=z^2 + 3z + 2 \\
p_2^{(2)} &=\frac 18 z^4 + \frac 56 z^3 + 2 z^2 + 2 z + \frac 12 \\[10pt]
p_0^{(3)} &=\frac 83 \\
p_1^{(3)} &=\frac 94 z^2 + \frac{15}2 z + 5 \\
p_2^{(3)} &=\frac 12 z^4 + \frac{11}3 z^3 + 9 z^2 + 9z + 3 \\
p_3^{(3)} &=\frac 1{48} z^6 + \frac 7{24} z^5 + \frac{35}{24} z^4 + \frac 72 z^3 
+ \frac{17}4 z^2 + \frac 52 z + \frac 23 \\[10pt]
p_0^{(4)} &=\frac {125}{24} \\
p_1^{(4)} &=\frac{16}3 z^2 + \frac{56}3 z + \frac{38}3 \\
p_2^{(4)} &=\frac{27}{16} z^4 + \frac{51}4 z^3 + \frac{129}4 z^2 + \frac{65}2 z 
+ \frac{45}4 \\
p_3^{(4)} &=\frac 16 z^6 + \frac{13}6 z^5 + \frac{21}2 z^4 + \frac{74}3 z^3 
+ 30 z^2 + 18 z + \frac{13}3 \\
p_4^{(4)} &=\frac 1{384} z^8 + \frac 1{16} z^7 + \frac 59 z^6 + \frac {49}{20} z^5 
+ \frac{289}{48} z^4 + \frac{103}{12} z^3 + \frac {85}{12} z^2 + \frac {19}6 z + \frac{13}{24} 
\end{align*}
In this section we prove that all polynomials $p^{(k)}_m$ have positive coefficients and 
provide evidence for the assertion that most $p^{(k)}_m$ are ultra-log-concave. Both
facts were conjectured in~\cite{ACM}.

We denote by $c^{(k)}_{mj}\in \Qbb$ the coefficient of $z^j$ in $p^{(k)}_m(z)$.
The following proposition introduces rational numbers $\Gamma_{mj}(\ell)$ and 
establishes the equality~\eqref{eq:cfromGamma} stated in~Theorem~\ref{thm:pcG}.

\begin{prop}\label{prop:Gac}
For $\ell\ge 0$ and $j\ge 0$ let $\Gamma_{mj}(\ell)\in \Qbb$ be defined by 
the identity
\[
\sum_{m\ge 0} \sum_{j=0}^{2m} \Gamma_{mj}(\ell) z^j u^m
=e^{-z}\left((1+(\ell+1)u)(1-(z+\ell+1)u)\right)^{-\frac 1u-\ell} \prod_{j=0}^{\ell-1}
(1+ju)\saf.
\]
Then $\Gamma_{mj}(\ell)=0$ for $j>2m$ and
\[
c^{(k)}_{mj} =\frac{(k-m+1)^{k-2m+j}}{(k-m+1)!} \cdot \Gamma_{mj}(k-m)
\]
for all $k$, $m$, $j$ with $0\le m\le k$, $0\le j\le 2m$.
\end{prop}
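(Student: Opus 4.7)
The plan is to deduce the proposition directly from Theorem~\ref{thm:main2} by comparing coefficients under a simple rescaling of variables. Let $B_\ell(z,u)$ denote the coefficient of $t^\ell$ in $P(z,t,u)$. By the definition of $P$ one has
\[
B_\ell(z,u) = \sum_{m\ge 0} p^{(m+\ell)}_m(z)\,u^m,
\]
so that $c^{(m+\ell)}_{mj}$ is precisely the coefficient of $z^j u^m$ in $B_\ell$; meanwhile, Theorem~\ref{thm:main2} furnishes an explicit closed form for $B_\ell(z,u)$. Denote by $A_\ell(z,u)$ the right-hand side of the defining identity for $\Gamma_{mj}(\ell)$. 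The proposition will follow once I verify that $B_\ell$ and $A_\ell$ are related by a transparent change of variables.

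Concretely, I would check by direct substitution that
\[
B_\ell(z,u) = \frac{(\ell+1)^\ell}{(\ell+1)!}\,A_\ell\!\left((\ell+1)z,\,\tfrac{u}{\ell+1}\right).
\]
Under the substitutions $z\mapsto (\ell+1)z$, $u\mapsto u/(\ell+1)$ the factor $e^{-z}$ becomes $e^{-(\ell+1)z}$; the product $(1+(\ell+1)u)(1-(z+\ell+1)u)$ becomes $(1+u)(1-(z+1)u)$; the exponent $-\tfrac 1u - \ell$ becomes $-\tfrac{\ell+1}u - \ell = -\tfrac{1+\ell(u+1)}u$; and $\prod_{j=0}^{\ell-1}(1+ju)$ becomes $\prod_{j=0}^{\ell-1}(1+\tfrac{ju}{\ell+1})$. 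Matching this with the expression for $B_\ell$ read off from Theorem~\ref{thm:main2} establishes the rescaling identity.

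Extracting the coefficient of $z^j u^m$ on both sides yields
\[
c^{(m+\ell)}_{mj} = \frac{(\ell+1)^\ell}{(\ell+1)!}\,(\ell+1)^{j-m}\,\Gamma_{mj}(\ell)
= \frac{(\ell+1)^{\ell+j-m}}{(\ell+1)!}\,\Gamma_{mj}(\ell),
\]
and setting $k=m+\ell$, i.e., $\ell=k-m$, gives the claimed formula. The vanishing statement comes out as a byproduct: since $p^{(k)}_m(z)$ has degree $2m$ by~\cite{ACM}, every coefficient $c^{(k)}_{mj}$ with $j>2m$ vanishes, and through the same rescaling the coefficient of $z^j u^m$ in $A_\ell(z,u)$ vanishes too, i.e., $\Gamma_{mj}(\ell)=0$ for $j>2m$ and all $\ell\ge 0$; this in turn confirms that the definition of $\Gamma_{mj}(\ell)$ by the displayed identity is consistent. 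There is no substantive combinatorial obstacle here: the proof is essentially bookkeeping, and the content lies entirely in the already-proved Theorem~\ref{thm:main2}.
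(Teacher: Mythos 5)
Your proof is correct and follows essentially the same route as the paper's: both deduce the result from Theorem~\ref{thm:main2} by the rescaling $z\mapsto(\ell+1)z$, $u\mapsto u/(\ell+1)$, which converts the generating function for $\Gamma_{mj}(\ell)$ into the coefficient of $t^\ell$ in $P(z,t,u)$, and then extract coefficients of $z^j u^m$. The handling of the vanishing $\Gamma_{mj}(\ell)=0$ for $j>2m$ via the degree bound on $p^{(k)}_m$ from~\cite{ACM} is also the same in substance.
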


\begin{proof}
This follows from an application of Theorem~\ref{thm:main2}:
\begin{align*}
&\sum_{\ell\ge 0} \sum_{m\ge 0} \sum_{j\ge 0}\frac{(\ell+1)^{\ell-m+j}}{(\ell+1)!}
\Gamma_{mj}(\ell) z^j u^m t^\ell \\
=& \sum_{\ell\ge 0} \frac{(\ell+1)^\ell}{(\ell+1)!} \sum_{m\ge 0} \sum_{j\ge 0}
\Gamma_{mj}(\ell) ((\ell+1)z)^j \left(\frac u{\ell+1}\right)^m t^\ell \\
=&\sum_{\ell\ge 0} \frac{(\ell+1)^\ell}{(\ell+1)!} e^{(\ell+1)z}
\left( \left(1+u\right)\left(1-(z+1)u\right)\right)^{-\frac{\ell+1}u-\ell} 
\prod_{j=0}^{\ell-1}\left(1+\frac{ju}{\ell+1}\right) \\
=&P(z,t,u) 
=\sum_{\ell\ge 0} \sum_{m\ge 0} p^{(m+\ell)}(z) u^m t^\ell \\
=&\sum_{\ell\ge 0} \sum_{m\ge 0} \sum_{j=0}^{2m} c^{(m+\ell)}_{mj} z^j u^m t^\ell\saf.
\end{align*}
Comparing coefficients gives the statement.
\end{proof}

In order to prove Theorem~\ref{thm:pcG} we have to verify that $\Gamma_{mj}(\ell)$
is a polynomial in $\ell$ and attains positive values for $\ell\ge 0$ in the range
$m\ge 0$, $0\le j\le 2m$. For this purpose, we introduce another set of ancillary 
rational numbers $\Delta_{mj}(\ell)$, defined by the generating function
\begin{equation}\label{eq:Deldef}
\sum_{m\ge 0} \sum_{j\ge 0} \Delta_{mj}(\ell)z^j u^m
=e^{-z}\,\left((1+(\ell+1)u)(1-u(z+\ell+1))\right)^{-(\frac 1u+\ell)}\saf;
\end{equation}
thus,
\begin{equation}\label{eq:GaDe}
\sum_{m\ge 0} \sum_{j=0}^{2m} \Gamma_{mj}(\ell)z^j u^m
=\left(\sum_{m\ge 0} \sum_{j=0}^{2m} \Delta_{mj}(\ell)z^j u^m\right)
\prod_{j=0}^{\ell-1} \left(1+ju\right)\saf.
\end{equation}

\begin{prop}\label{prop:Delta}
For all $m\ge 0$ and $j=0,\dots,2m$, 
$\Delta_{mj}(\ell)$ is a polynomial in $\ell$ of degree $2m-j$ and with positive 
coefficients. For $j>2m$, $\Delta_{mj}(\ell)=0$. 
\end{prop}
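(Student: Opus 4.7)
The plan is to express the generating function for the $\Delta_{mj}(\ell)$ as the exponential of a formal series with manifestly non-negative coefficients, and then control degrees by a weight argument. Writing
\[
(1+(\ell+1)u)(1-u(z+\ell+1)) = 1 - uC, \qquad C := z + (\ell+1)(z+\ell+1)u\saf,
\]
and using $-\log(1-uC)=\sum_{k\ge 1}(uC)^k/k$ yields
\[
-\left(\tfrac1u+\ell\right)\log(1-uC)
=\sum_{k\ge 1}\frac{u^{k-1}C^k}{k}+\ell\sum_{k\ge 1}\frac{u^k C^k}{k}\saf,
\]
whose $u^0$-component is exactly $C|_{u=0}=z$. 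Setting
\[
E := -\left(\tfrac1u+\ell\right)\log(1-uC) - z\saf,
\]
the defining identity~\eqref{eq:Deldef} becomes $\sum_{m,j}\Delta_{mj}(\ell)z^j u^m = e^E$. Since $C$ has non-negative coefficients as a polynomial in $z,\ell,u$, so do all $C^k$ and the two displayed sums; subtracting $z$ removes the only $u^0$ contribution, so $E\in\Qbb[z,\ell][[u]]$ has non-negative coefficients. Therefore $e^E=\sum_n E^n/n!$ does as well, and each $\Delta_{mj}(\ell)$ is a polynomial in $\ell$ with non-negative coefficients.

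For the degree bound in $\ell$, I assign weight~$1$ to each of $z$ and $\ell$ (and weight $0$ to $u$). Then $C$ has weight~$\le 1$ in its $u^0$-part ($z$) and weight~$\le 2$ in its $u^1$-part $((\ell+1)(z+\ell+1))$, so $[u^s]C^k$ has weight $\le k+s$. Reading off coefficients in the two sums above shows that for $i\ge 1$, $E_i:=[u^i]E$ has weight $\le i+1$. Consequently, in $e^E=\sum_n E^n/n!$ the coefficient of $u^m$ arising from a partition $i_1+\cdots+i_n=m$ (with $i_j\ge 1$) has weight at most $\sum(i_j+1)=m+n\le 2m$, with equality only for $n=m$ (which forces all $i_j=1$). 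This immediately yields $\Delta_{mj}(\ell)=0$ for $j>2m$ and $\deg_\ell\Delta_{mj}(\ell)\le 2m-j$ when $j\le 2m$.

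Sharpness and full positivity then follow by inspecting the single extremal contribution $E_1^m/m!$. A short computation gives
\[
E_1 = (\ell+1)(z+\ell+1)+\ell z+\tfrac{z^2}{2}
= \ell^2+2\ell z+\tfrac{z^2}{2}+2\ell+z+1\saf,
\]
whose weight-$2$ part is $\ell^2+2\ell z+z^2/2$. Since no $n<m$ reaches weight $2m$, the weight-$2m$ part of $[u^m]e^E$ is exactly $(\ell^2+2\ell z+z^2/2)^m/m!$, and a direct multinomial expansion shows that the coefficient of $\ell^{2m-j}z^j$ in this is strictly positive for every $0\le j\le 2m$. This proves $\deg_\ell\Delta_{mj}=2m-j$ with positive leading coefficient. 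For the remaining lower coefficients, observe that $E_1$ contains \emph{every} monomial $\ell^r z^s$ with $r+s\le 2$ with strictly positive coefficient; a Minkowski-sum argument then shows that $E_1^m$ contains every $\ell^r z^s$ with $r+s\le 2m$ with strictly positive coefficient, and since all other partitions of $m$ contribute only non-negatively to $e^E$, every coefficient of $\Delta_{mj}(\ell)$ in degrees $0$ through $2m-j$ is strictly positive.

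The main technical subtlety is the weight bookkeeping for the $E_i$: the prefactor $-(\tfrac1u+\ell)$ contains a $\tfrac1u$ that shifts $u$-powers downward, and one must verify that this shift does not inflate the weight of $[u^m]E$ beyond $m+1$. This bound, which uses in an essential way that the $u^1$-part of $C$ has weight exactly $2$, is precisely what pins down $\deg_\ell\Delta_{mj}\le 2m-j$ rather than a weaker bound; everything else is then mechanical term-tracking.
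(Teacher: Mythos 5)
Your proof is correct and follows the paper's strategy: take the logarithm of the generating function~\eqref{eq:Deldef}, verify the resulting series has nonnegative coefficients, bound $\deg_\ell$ by a weight count, and use the extremal contribution $E_1^m/m!$ to pin down both the degree and strict positivity. Your substitution $1-uC$ with $C=z+(\ell+1)(z+\ell+1)u$ packages the two logarithms into one and is a tidier route than the paper's explicit formulas for the coefficients $\delta_{ij}(\ell)$ of the logarithm. You are in fact slightly more careful than the paper on strict positivity: the paper asserts that each product $\delta_{i_1 j_1}(\ell)\cdots\delta_{i_r j_r}(\ell)$ has positive coefficients, but some factors have a vanishing constant term (e.g.\ $\delta_{i,0}(\ell)=\frac{2}{i}\ell(\ell+1)^i$ for $i$ even), so strict positivity of every coefficient of $\Delta_{mj}(\ell)$ really does require an argument like the Minkowski-sum observation you make, namely that $E_1^m$ already covers the entire weight simplex $\{(r,s): r+s\le 2m\}$.
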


\begin{proof}
The proof amounts to elementary but delicate calculations. Note that
\begin{align*}
\log &\left(e^{-z}\,\left((1+(\ell+1)u)(1-u(z+\ell+1))\right)^{-(\frac 1u+\ell)}\right) \\
&=-z+\left(-\frac 1u-\ell\right)
\left(\log(1+(\ell+1)u)+\log(1-(\ell+1+z)u)\right) \\
&=-z+\left(-\frac 1u-\ell\right)
\sum_{i\ge 0} \frac{(-1)^i (\ell+1)^{i+1}-(\ell+1+z)^{i+1}}{i+1} u^{i+1} \saf.
\end{align*}
The reader will verify that this expression equals
\begin{multline*}
\sum_{i\ge 1} 
\frac 1{i(i+1)}\Big(
(\ell+1)^i(i+2\ell i+\ell+ (-1)^{i+1}(i-\ell)) \\
+\sum_{j=1}^i \binom {i+1}j (\ell+1)^{i-j} 
(\ell(i-j)+i+\ell +\ell i)z^j+ i z^{i+1}\Big)
u^i \saf,
\end{multline*}
that is, the coefficient of $z^j u^i$ in the expression is $0$ for $i=0$ and for
$j>i+1$, and equals
\[
\begin{cases}
\frac 1{i(i+1)}(\ell+1)^i(i+2\ell i+\ell+ (-1)^{i+1}(i-\ell))\quad &\text{for $j=0$} \\
\frac 1{i(i+1)}\binom {i+1}j (\ell+1)^{i-j} (\ell(i-j)+i+\ell +\ell i)\quad &\text{for $1\le j\le i$} \\
\frac 1{i+1} &\text{for $j=i+1$} 
\end{cases}
\]
for $i\ge 1$.
For all $j=0,\dots, i+1$, this is a polynomial in $\ell$ of degree $i+1-j$.
Further, trivially $(i-j)\ge 0$ in the range $1\le j\le i$ and
\[
i+2\ell i+\ell+ (-1)^{i+1}(i-\ell) =
\begin{cases}
2i(\ell+1)> 0\quad & \text{if $i$ is odd} \\
2\ell (i+1)> 0\quad & \text{if $i$ is even.} 
\end{cases}
\]
Summarizing,
\[
\sum_{m\ge 0} \sum_{j\ge 0} \Delta_{mj}(\ell)z^j u^m
=\exp\left(\sum_{i\ge 1} \sum_{j=0}^{i+1} \delta_{ij}(\ell) z^j u^i\right)
\]
with $\delta_{ij}(\ell)$ polynomials of degree $i+1-j$ with positive coefficients. Now
\[
\exp(c_1 u + c_2 u^2 + c_3 u^3 + \cdots)=1 + c_1 u + \left(\frac {c_1^2}2  + c_2\right)u^2 
+ \left(c_3 + c_1 c_2 + \frac {c_1^3}6\right)u^3 + \cdots
\]
is a series whose coefficient of $u^m$ is a linear combination of products
$c_{i_1}\cdots c_{i_r}$
with $i_k>0$ and $\sum i_k=m$. Therefore: for every $m$, 
$\sum_{j\ge 0} \Delta_{mj}(\ell) z^j$ is a linear combination of terms
\[
\left( \sum_{j_1=0}^{i_1+1} \delta_{i_1j_1}(\ell) z^{j_1} \right)\cdots
\left( \sum_{j_r=0}^{i_r+1} \delta_{i_rj_r}(\ell) z^{j_r} \right)
=\sum \delta_{i_1j_1}(\ell)\cdots \delta_{i_rj_r}(\ell) z^{j_1+\cdots +j_r}\saf.
\]
with $i_k>0$ and $\sum i_k=m$. Each polynomial 
$\delta_{i_1j_1}(\ell)\cdots \delta_{i_rj_r}(\ell)$ has positive coefficients and degree
\[
\sum (i_k+1)-\sum {j_k}= m+r-j
\]
with $j=\sum j_k$ the exponent of $z$. The degree of the sum is the maximum
attained by $m+r-j$, that is, $2m-j$ (for $r=m$, $i_1=\cdots=i_m=1$). 
Thus, the coefficient $\Delta_{mj}(\ell)$ has degree $2m-j$ in $\ell$, as stated. 
The exponent $j$ itself ranges from $0$, attained for $j_r=0$ for all $r$, to 
$\sum (i_k+1)=m+r$, and attains the maximum~$2m$, again when all $i_k$ 
equal~$1$. 

Thus $\Delta_{mj}(\ell)=0$ for $j>2m$, and this concludes the proof of the proposition.
\end{proof}

The polynomials $\Delta_{mj}(\ell)$ can of course be computed explicitly:
\[
\Delta_{00}=1,\quad
\Delta_{10}= (\ell+1)^2,\quad
\Delta_{11}= 2\ell+1,\quad
\Delta_{12}= \frac 12,\quad
\Delta_{20}= \frac{(\ell^2 + 4\ell + 1)(\ell + 1)^2}2,
\]
etc. We have verified that all the polynomials $\Delta_{mj}(\ell)$ with $1\le m\le 50$,
$0\le j\le 2m$, are ultra-log-concave. The first several hundred are in fact real-rooted,
but $\Delta_{19,1}(\ell)$ appears not to be.

Proposition~\ref{prop:Delta} implies another part of Theorem~\ref{thm:pcG}, thereby
confirming the first statement in Conjecture~\ref{conj:plogc}.

\begin{corol}\label{cor:pospkm}
For all $m\ge 0$, $0\le j\le 2m$, and $\ell\ge 0$ we have $\Gamma_{mj}(\ell)>0$.

Therefore, for all $m\ge 0$, $0\le j\le 2m$, and $k\ge m$, $p^{(k)}_m(z)$ is a degree~$2m$ 
polynomial with {\em positive\/} coefficients.
\end{corol}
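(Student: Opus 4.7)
My plan is to deduce the corollary directly from Proposition~\ref{prop:Delta}, exploiting the convolution identity~\eqref{eq:GaDe} that expresses each $\Gamma_{mj}(\ell)$ in terms of the $\Delta_{m'j}(\ell)$.

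First I would note that for every nonnegative integer $\ell$ the polynomial $\prod_{i=0}^{\ell-1}(1+iu)$ manifestly has nonnegative coefficients (each factor does) and constant term equal to~$1$. Comparing coefficients of $z^j u^m$ on the two sides of~\eqref{eq:GaDe} therefore expresses $\Gamma_{mj}(\ell)$ as $\Delta_{mj}(\ell)$ plus a nonnegative linear combination of the $\Delta_{m-k,j}(\ell)$ for $k=1,\dots,\min(m,\ell)$. By Proposition~\ref{prop:Delta}, each $\Delta_{m'j}(\ell)$ with $0\le j\le 2m'$ is a polynomial in $\ell$ with strictly positive coefficients, hence positive for every $\ell\ge 0$, while $\Delta_{m'j}(\ell)$ vanishes identically when $j>2m'$. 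Every summand above is therefore nonnegative, and in the range $0\le j\le 2m$ the $k=0$ contribution $\Delta_{mj}(\ell)$ is strictly positive; this forces $\Gamma_{mj}(\ell)>0$, establishing the first assertion.

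The second assertion is then immediate from Proposition~\ref{prop:Gac}: the formula
\[
c^{(k)}_{mj}=\frac{(k-m+1)^{k-2m+j}}{(k-m+1)!}\,\Gamma_{mj}(k-m)
\]
has a manifestly positive prefactor for $k\ge m$, and $\Gamma_{mj}(k-m)>0$ for $0\le j\le 2m$ by what we have just shown. Hence every such coefficient $c^{(k)}_{mj}$ is positive; in particular the leading coefficient $c^{(k)}_{m,2m}$ is nonzero, so $p^{(k)}_m(z)$ has degree exactly~$2m$. I do not foresee any genuine obstacle in this argument, since all of the combinatorial content has been absorbed into Proposition~\ref{prop:Delta}; the only care required is to correctly isolate the $k=0$ summand of the convolution~\eqref{eq:GaDe} as the strict-positivity certificate.
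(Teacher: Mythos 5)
Your proof is correct and follows essentially the same route as the paper: both rest on the convolution~\eqref{eq:GaDe}, the positivity of the $\Delta_{mj}(\ell)$ from Proposition~\ref{prop:Delta}, the nonnegativity of the coefficients of $\prod_{j=0}^{\ell-1}(1+ju)$, and the formula of Proposition~\ref{prop:Gac} for the second assertion. Your version is slightly more careful than the paper's terse proof, since you explicitly isolate the $m_2=0$ term (coming from the constant term $1$ of $\prod_{j=0}^{\ell-1}(1+ju)$) as the certificate of \emph{strict} positivity, $\Gamma_{mj}(\ell)\ge \Delta_{mj}(\ell)>0$, whereas the paper leaves this small step implicit.
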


\begin{proof}
The second part of the statement follows from the first by Proposition~\ref{prop:Gac}. 

The first part is a consequence of~\eqref{eq:GaDe}, since the polynomials~$\Delta_{mj}(\ell)$ 
have positive coefficients by Proposition~\ref{prop:Delta} and so does the factor 
$\prod_{j=0}^{\ell-1} (1+ju)$.
\end{proof}

The following proposition will complete the proof of Theorem~\ref{thm:pcG}.

\begin{prop}\label{prop:Gampo}
For all $m\ge 0$ and $0\le j\le m$, the functions $\Gamma_{mj}(\ell)$ are polynomials 
of degree $2m-j$ in $\ell$.
\end{prop}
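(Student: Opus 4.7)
The plan is to bootstrap Proposition~\ref{prop:Delta} via the factorization \eqref{eq:GaDe}, reducing the claim to the observation that the coefficients of $\prod_{k=0}^{\ell-1}(1+ku)$, viewed as functions of~$\ell$, are themselves polynomials of controlled degree with positive leading terms.

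Concretely, the first step is to extract the coefficient of $z^j u^m$ from both sides of~\eqref{eq:GaDe} to obtain
\[
\Gamma_{mj}(\ell) = \sum_{i=0}^{m} \Delta_{m-i,j}(\ell)\cdot a_i(\ell),
\]
where $a_i(\ell)$ denotes the coefficient of $u^i$ in $\prod_{k=0}^{\ell-1}(1+ku)$. The next step is to identify $a_i(\ell)$ with the elementary symmetric polynomial $e_i(0,1,\ldots,\ell-1)$, which equals the unsigned Stirling number of the first kind $|s(\ell,\ell-i)|$. Classically, this is a polynomial in~$\ell$ of degree~$2i$ with positive leading coefficient $1/(2^i\,i!)$. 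For a self-contained verification one may express $e_i(0,1,\ldots,\ell-1)$ by Newton's identities in terms of the power sums $\sum_{k=0}^{\ell-1} k^r$, which are themselves polynomials in~$\ell$ of degree~$r+1$ by Faulhaber's formula; tracking leading terms confirms both the degree and positivity.

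With this fact in hand, Proposition~\ref{prop:Delta} asserts that each $\Delta_{m-i,j}(\ell)$ is a polynomial of degree $2(m-i)-j$ in~$\ell$ with positive coefficients (and vanishes identically when $j>2(m-i)$). Multiplying by the degree-$2i$ polynomial $a_i(\ell)$ produces a polynomial of degree exactly $2m-j$, so the displayed sum exhibits $\Gamma_{mj}(\ell)$ as a polynomial in~$\ell$ of degree at most~$2m-j$.

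The one point requiring care, which I expect to be the main (mild) obstacle, is confirming that the degree is exactly $2m-j$ rather than strictly smaller: a priori, the leading terms contributed by the various indices $i$ could conspire to cancel. But this cannot happen here, because every nonvanishing summand contributes a \emph{positive} leading coefficient, as both $\Delta_{m-i,j}(\ell)$ and $a_i(\ell)$ have positive leading coefficients. This rules out cancellation, so $\Gamma_{mj}(\ell)$ has degree exactly $2m-j$, which establishes the proposition and, in conjunction with Corollary~\ref{cor:pospkm}, completes the proof of Theorem~\ref{thm:pcG}.
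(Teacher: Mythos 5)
Your proposal is correct and follows essentially the same route as the paper: both extract the coefficient of $u^m$ from $\prod_{j=0}^{\ell-1}(1+ju)$, show it is a polynomial in $\ell$ of degree $2m$ with positive leading coefficient, and then invoke~\eqref{eq:GaDe} together with Proposition~\ref{prop:Delta} to conclude. The paper obtains this intermediate fact by exponentiating a Faulhaber-type expansion of $\log\prod(1+ju)$, whereas you identify the coefficient as the elementary symmetric polynomial $e_i(0,\dots,\ell-1)=|s(\ell,\ell-i)|$ and cite the classical polynomiality in $\ell$; you note Newton's identities plus Faulhaber would give a self-contained check, which is the same underlying computation. You are slightly more explicit than the paper in ruling out cancellation of leading terms via positivity, a worthwhile remark but not a different argument.
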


\begin{proof}
Faulhaber's formula for sums of powers $\sum_{j=0}^{\ell-1} j^i$ easily implies that
\begin{align*}
\prod_{j=0}^{\ell-1} &(1+ju) =\exp\left(
\sum_{i\ge 1} 
\sum_{j=0}^i \binom{i+1}j B_j \ell^{i-j+1}
\frac{(-1)^{i+1}u^i}{i(i+1)}
\right) \\
&=1+\frac 12 \ell(\ell-1)u + \frac 1{24} \ell(\ell-1)(\ell-2) (3\ell-1) u^2
+\frac 1{48} \ell^2 (\ell-1)^2(\ell-2)(\ell-3)u^3 + \cdots \\
&=: \sum_{m\ge 0} \beta_m(\ell) u^m
\end{align*}
where $B_j$ denotes the $j$-th Bernoulli number. It follows that the coefficient $\beta_m(\ell)$
of $u^m$ in the expansion of $\prod_{j=0}^{\ell-1}(1+ju)$ is a polynomial in $\Qbb[\ell]$
of degree~$2m$ (and of course $\beta_m(\ell)=0$ for $m\ge \ell$). 
By~\eqref{eq:GaDe} we have
\[
\Gamma_{mj}(\ell)=\sum_{m_1+m_2=m} \Delta_{m_1j}(\ell) \beta_{m_2}(\ell)
\]
and it follows that $\Gamma_{mj}(\ell)$ is a polynomial in $\Qbb[\ell]$ of degree
$2m-j$ as stated.
\end{proof}

This concludes the proof of Theorem~\ref{thm:pcG}.\smallskip

The information collected so far may also be used to investigate the conjectured log-concavity
properties of the polynomials $p^{(k)}_m$ (cf.~Conjecture~\ref{conj:plogc} in~\S\ref{sec:intro}).
With the notation introduced above, ultra-log concavity for $p^{(k)}_m$ is the following condition:
\[
\forall j = 1,\dots,2m-1\colon\quad
\left(\frac{c^{(k)}_{mj}}{\binom {2m}j}\right)^2 \ge \frac{c^{(k)}_{m,j-1}}{\binom {2m}{j-1}}\cdot 
\frac{c^{(k)}_{m,j+1}}{\binom {2m}{j+1}}\saf.
\]
By Corollary~\ref{cor:pospkm} $c^{(k)}_{mj}>0$ in this range, therefore these sequences
automatically have no internal zeros.

\begin{lemma}\label{lem:testlcp}
The polynomial $p^{(m+\ell)}_m$ is ultra-log-concave if and only if
\[
\forall j=1,\dots,2m-1\colon\quad
j(2m-j) \Gamma_{mj}(\ell)^2 - (j+1)(2m-j+1) \Gamma_{m,j-1}(\ell) \Gamma_{m,j+1}(\ell)\ge 0
\saf.
\]
\end{lemma}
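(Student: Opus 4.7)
The plan is to reduce ultra-log-concavity directly to the stated inequality via the substitution from Proposition~\ref{prop:Gac}, with essentially no extra work beyond bookkeeping. Unravelling the definition, ultra-log-concavity of $p^{(m+\ell)}_m$ amounts to
\[
\binom{2m}{j-1}\binom{2m}{j+1}\,(c^{(m+\ell)}_{mj})^2 \ge \binom{2m}{j}^2\, c^{(m+\ell)}_{m,j-1}\, c^{(m+\ell)}_{m,j+1}
\]
for $j=1,\ldots,2m-1$; positivity of the $c^{(m+\ell)}_{mj}$ (Corollary~\ref{cor:pospkm}) takes care of internal zeros, so this single inequality is exactly what needs verifying.

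First I would substitute Proposition~\ref{prop:Gac} with $k=m+\ell$, so that
\[
c^{(m+\ell)}_{mj} = \frac{(\ell+1)^{\ell-m+j}}{(\ell+1)!}\,\Gamma_{mj}(\ell)\saf.
\]
Both sides of the ultra-log-concavity inequality above then carry the common positive factor $(\ell+1)^{2\ell-2m+2j}/((\ell+1)!)^2$, because the exponent $\ell-m+j$ on the left contributes $2(\ell-m+j)$, while on the right the exponents $\ell-m+(j-1)$ and $\ell-m+(j+1)$ add to the same total. After cancelling, the inequality becomes
\[
\binom{2m}{j-1}\binom{2m}{j+1}\,\Gamma_{mj}(\ell)^2 \ge \binom{2m}{j}^2\,\Gamma_{m,j-1}(\ell)\,\Gamma_{m,j+1}(\ell)\saf.
\]

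Next I would simplify the binomial ratio using $\binom{2m}{j-1}/\binom{2m}{j} = j/(2m-j+1)$ and $\binom{2m}{j+1}/\binom{2m}{j} = (2m-j)/(j+1)$, which gives
\[
\frac{\binom{2m}{j-1}\binom{2m}{j+1}}{\binom{2m}{j}^2} = \frac{j(2m-j)}{(j+1)(2m-j+1)}\saf.
\]
Clearing the denominator $(j+1)(2m-j+1)$ yields exactly the inequality stated in the lemma, and the steps are reversible, proving the equivalence.

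There is no real obstacle here; the only delicate point is verifying that the exponent of $(\ell+1)$ in the cancelled factor matches on both sides, and that the cancellation is legitimate (which holds since $\ell+1>0$ and $(\ell+1)!>0$). Everything else is formal rearrangement.
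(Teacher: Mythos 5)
Your proof is correct and is exactly the expansion of what the paper tersely calls an ``Immediate consequence of Proposition~\ref{prop:Gac}'': substitute $c^{(m+\ell)}_{mj}=\frac{(\ell+1)^{\ell-m+j}}{(\ell+1)!}\Gamma_{mj}(\ell)$ into the definition of ultra-log-concavity, note that the powers of $\ell+1$ cancel, and simplify the binomial ratio. The bookkeeping (matching exponents, positivity of the cancelled factor, and the reduction $\binom{2m}{j-1}\binom{2m}{j+1}/\binom{2m}{j}^2=j(2m-j)/((j+1)(2m-j+1))$) all checks out.
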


\begin{proof}
Immediate consequence of Proposition~\ref{prop:Gac}.
\end{proof}

Since by Proposition~\ref{prop:Gampo} each $\Gamma_{mj}(\ell)$ is a {\em polynomial\/}
in $\ell$, Lemma~\ref{lem:testlcp} provides us with an effective way to test the 
ultra-log-concavity of all polynomials
$p^{(k)}_m$, $k\ge m$, for any given~$m$. For example, according to Lemma~\ref{lem:testlcp},
in order to verify that $p^{(k)}_2$ is ultra-log-concave for all $k\ge 2$, it suffices to observe
that the polynomials
\begin{gather*}
\frac 94 \ell^6+\frac{131}{12} \ell^5+\frac{41}3 \ell^4 + \frac{85}4 \ell^3
+\frac{403}{12} \ell^2 +\frac{67}3 \ell +4 \\
\frac{13}4 \ell^4 +\frac{25}2 \ell^3 + 9 \ell^2 +\frac 54\ell+1 \\
\frac 14 \ell^2 + \frac 34 \ell + \frac 1{12}
\end{gather*}
are trivially positive-valued for all $\ell\ge 0$. For larger $m$ not all the polynomials appearing 
in Lemma~\ref{lem:testlcp} have positive coefficients, but it is a straightforward computational
process to verify whether they only attain positive values for $\ell\ge 0$.

\begin{prop}
The polynomials $p^{(k)}_m(z)$ are ultra-log concave for $m\le 100$ and all $k\ge m$, 
with the exceptions listed in Conjecture~\ref{conj:plogc}.
\end{prop}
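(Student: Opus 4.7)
The plan is to apply Lemma~\ref{lem:testlcp} and reduce the proof, for each fixed $m$, to verifying nonnegativity on $\ell\ge 0$ of the $2m-1$ polynomials
\[
Q_{mj}(\ell):=j(2m-j)\,\Gamma_{mj}(\ell)^2-(j+1)(2m-j+1)\,\Gamma_{m,j-1}(\ell)\,\Gamma_{m,j+1}(\ell),
\]
for $j=1,\dots,2m-1$. By Proposition~\ref{prop:Gampo} each $\Gamma_{mj}$ lies in $\Qbb[\ell]$ with degree $2m-j$, so $Q_{mj}\in\Qbb[\ell]$ has degree at most $4m-2j$. Corollary~\ref{cor:pospkm} already supplies the positivity of all coefficients $c^{(k)}_{mj}$, so the "no internal zeros" requirement is automatic and only the quadratic inequality in Lemma~\ref{lem:testlcp} remains to be checked.

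First I would compute the polynomials $\Gamma_{mj}(\ell)$ explicitly for $m=1,\dots,100$ and $0\le j\le 2m$. A convenient route is the decomposition
\[
\Gamma_{mj}(\ell)=\sum_{m_1+m_2=m}\Delta_{m_1,j}(\ell)\,\beta_{m_2}(\ell)
\]
from the proof of Proposition~\ref{prop:Gampo}, where the $\Delta_{m_1,j}(\ell)$ are read off from a series expansion of the generating function~\eqref{eq:Deldef} (truncated at $u^m z^{2m}$) and the $\beta_{m_2}(\ell)$ are the coefficients of $\prod_{j=0}^{\ell-1}(1+ju)$, obtained via Faulhaber's formula as in the proof. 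Both steps are routine symbolic computations in any modern computer algebra system, and assembling the $Q_{mj}(\ell)$ is then immediate.

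The hard step is verifying $Q_{mj}(\ell)\ge 0$ on $\ell\ge 0$ rigorously: as noted after Proposition~\ref{prop:Delta}, the polynomials $\Delta_{m,j}$ and hence the $Q_{mj}$ need not have nonnegative coefficients for moderate $m$, so one cannot simply read positivity off an expansion. My approach is to apply Sturm's theorem to each $Q_{mj}$ to count real roots in $[0,\infty)$ exactly; since the coefficients are rational, Sturm sequences yield a certified answer. If no sign change in $(0,\infty)$ is detected, positivity follows from a single evaluation of $Q_{mj}$ at any convenient positive rational together with the sign of the leading coefficient. When a real root is detected, one inspects its multiplicity via $\gcd(Q_{mj},Q_{mj}')$; for the exceptions contemplated in Conjecture~\ref{conj:plogc} (the polynomials $p^{(1)}_1$, $p^{(3)}_3$, $p^{(5)}_5$, corresponding to $\ell=0$ for $m=1,3,5$), a simple zero of $Q_{mj}$ at $\ell=0$ is expected and the claim is stated only for $\ell\ge 1$.

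The main obstacle is computational rather than conceptual: for $m$ near $100$, the polynomials $Q_{mj}$ can have degree in the hundreds and rapidly growing rational coefficients, so efficient polynomial arithmetic and careful memory management in the chosen computer algebra system are essential. Nevertheless, the total work is bounded by roughly $\sum_{m=1}^{100}(2m-1)\approx 10^4$ Sturm computations on polynomials of manageable size, which is consistent with the "few hours of computing time" reported in~\S\ref{sec:intro}. The outcome is a case-by-case but fully rigorous verification of ultra-log-concavity in the stated range, with the anticipated three exceptions.
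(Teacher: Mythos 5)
Your approach is essentially identical to the paper's: invoke Lemma~\ref{lem:testlcp} to reduce ultra-log-concavity of $p^{(m+\ell)}_m$ for all $\ell\ge 0$ to checking positivity of the $2m-1$ polynomials $Q_{mj}(\ell)\in\Qbb[\ell]$ on $\ell\ge 0$, which is a finite symbolic computation for each $m\le 100$; the paper simply reports carrying this out in Maple, while you spell out a certified route via Sturm sequences. One small slip: for the exceptional cases $(m,\ell)=(1,0),(3,0),(5,0)$ the relevant $Q_{mj}(0)$ is \emph{negative} (e.g.\ for $p^{(1)}_1=\tfrac12z^2+z+1$ one has $Q_{1,1}(0)=\Gamma_{1,1}(0)^2-4\,\Gamma_{1,0}(0)\Gamma_{1,2}(0)=1-2=-1$), not a simple zero at $\ell=0$; the exclusion of those three polynomials reflects a strict failure of the inequality, not a boundary equality.
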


\begin{proof}
As in the foregoing discussion, Lemma~\ref{lem:testlcp} reduces this statement to a finite 
computation, which we carried out with maple.
\end{proof}

As one more application of the material developed above, we have the following 
formula for the Betti numbers of $\ocaM_{0,n}$ in terms of the polynomials $\Gamma_{mj}$:
\begin{equation}\label{eq:betti2ag}
\rk H^{2\ell}(\ocaM_{0,n}) =
\sum_{k+m=\ell} 
(-1)^m \frac{(k+1)^{n-2+k-m}}{k!}
\sum_{j=0}^{2m}
(n-1)\cdots (n-j)\,
\Gamma_{mj}(k)\saf.
\end{equation}
(This is Theorem~\ref{thm:betti2}.)

\begin{proof}
According to~\cite[Theorem~5.1]{ACM}, 
\[
\rk H^{2\ell}(\ocaM_{0,n})=\frac{(\ell+1)^{\ell+n-1}}{(\ell+1)!} 
+\sum_{m=1}^\ell (-1)^m\sum_{j=0}^{2m} \binom{n-1}j\, c_{mj}^{(\ell)}\, j!
(\ell-m+1)^{n-1-j}\saf.
\]
Identity~\eqref{eq:betti2ag} follows then from Proposition~\ref{prop:Gac}.
\end{proof}


\section{The Lambert W function and proof of Theorem~\ref{thm:Mtree}}\label{sec:Wm1}

The goal of this section is the proof of Theorem~\ref{thm:Mtree}, expressing the
generating function $\hM=1+z+\sum_{n\ge 3} [\ocaM_{0,n}]\frac{z^{n-1}}{(n-1)!}$ 
in terms of the principal branch $W$ of the classical {\em Lambert~W function,\/} 
cf.~\cite{MR1414285}. In fact it is more notationally convenient to work with the 
`tree function', defined by
\[
T(t) = -W(-t) =\sum_{n\ge 1}\frac{n^{n-1} t^n}{n!}
\]
and we recall that this function satisfies the relation
\begin{equation}\label{eq:Trel}
T(t)=t e^{T(t)}
\end{equation}
(\cite[(1.5)]{MR1414285}).

Consider the coefficient of $u^m$ in the generating function $P$ introduced in~\S\ref{sec:polp}:
\[
P_m(z,t)=\sum_{\ell\ge 0} p^{(m+\ell)}_m(z) t^\ell
\]
(cf.~\eqref{eq:gfPm}). It is clear that this function may be expressed as a series
in the tree function $T=T(t)$, since $t=e^{-T} T$ according to~\eqref{eq:Trel}:
\begin{equation}\label{eq:PmT}
P_m(z,t)=\sum_{\ell\ge 0} p^{(m+\ell)}_m(z) e^{-\ell T(t)} T(t)^\ell\saf.
\end{equation}
We are going to verify that this series is a {\em rational function\/} in $T$, and in fact
admits the particularly simple expression \eqref{eq:Pmtree}:
\[
P_m=e^T \frac{F_m(z,T)}{(1-T)^{2m-1}}
\]
where $F_0=\frac 1{1-T}$ and $F_m$ are polynomials for $m>0$, of degree $< 3m$. 
This is Proposition~\ref{prop:treeprop}, which we proceed to prove. Ultimately, this fact
is a consequence of Proposition~\ref{prop:Gampo}.

\begin{proof}[Proof of Proposition~\ref{prop:treeprop}]
For $m=0$, using~\eqref{eq:pk0}:
\[
P_0(z,t)=\sum_{\ell\ge 0} p^{(\ell)}_0 t^\ell=\sum_{\ell\ge 0} \frac{(\ell+1)^\ell}{(\ell+1)!} t^\ell
=\frac 1t T(t) = e^T
\]
by~\eqref{eq:Trel}, and this is the statement. For $m>0$, and treating $T$ as an indeterminate, 
consider the expression
\begin{align}\label{eq:bridge}
(1-T)^{2m-1} &\sum_{\ell\ge 0} 
\frac{(\ell+1)^{\ell+a}}{(\ell+1)!} e^{-(\ell+1)T} T^{\ell} \\
\notag &=(1-T)^{2m-1} \sum_{\ell\ge 0} \sum_{k\ge 0}
\frac{(\ell+1)^{\ell+k+a}}{(\ell+1)! k!} (-1)^k T^{k+\ell}\saf.
\end{align}
For $1\le a$, \eqref{eq:bridge} can be expressed in terms of Stirling numbers of 
the second kind (cf.~\eqref{eq:Stir2def}) and we can apply 
Lemma~\ref{lem:St2}:
\begin{align*}
(1-T)^{2m-1} &
\sum_{N\ge 0} \sum_{\ell\ge 0}\frac{(\ell+1)^{N+a}}{(\ell+1)! (N-\ell)!} (-1)^{N-\ell} T^N \\
&=(1-T)^{2m-1} 
\sum_{N\ge 0} \sum_{i\ge 1}\frac{i^{N+a}}{i!(N+1-i)!} (-1)^{N+1-i} T^N \\
&=(1-T)^{2m-1} \sum_{N\ge 0} S(N+a,N+1) T^N \\
&=(1-T)^{2(m-a)}\sigma_a(T)
\end{align*}
with $\sigma_a(T)$ a polynomial of degree $\le a-1$. In particular, for $a\le m$, 
\eqref{eq:bridge} is a polynomial of degree $\le 2m-a-1$ in this case. If $a\le 0$ and
$N\ge -a$, 
\[
\sum_{i\ge 1}\frac{i^{N+a}}{i!(N+1-i)!} (-1)^{N+1-i}
=S(N+a,N+1)=0
\]
since $N+a\le N+1$. Thus, \eqref{eq:bridge} is a polynomial of degree $\le 2m-a-1$ in 
this case as well. 

It follows that for all $m\ge 0$, $0\le j\le 2m$, and $0\le r\le 2m-j$,
\[
(1-T)^{2m-1} \sum_{\ell\ge 0} 
\frac{(\ell+1)^{\ell-m+j}}{(\ell+1)!} (\ell+1)^r e^{-(\ell+1)T} T^{\ell}
\]
is a polynomial of degree $\le 3m-j-r-1$. By Proposition~\ref{prop:Gampo}, 
$\Gamma_{mj}(\ell)$ is a polynomial of degree $2m-j$. Therefore it is a linear combination
of polynomials $(\ell+1)^r$, $0\le r\le 2m-j$, and we can conclude that
\[
(1-T)^{2m-1} \sum_{\ell\ge 0} 
\frac{(\ell+1)^{\ell-m+j}}{(\ell+1)!} \Gamma_{mj}(\ell) e^{-(\ell+1)T} T^{\ell}
\]
is a polynomial of degree $\le 3m-1$. By Proposition~\ref{prop:Gac}, this implies that
\[
F_m(z,T):=(1-T)^{2m-1} \sum_{\ell\ge 0} p_m^{(m+\ell)}(z) e^{-(\ell+1)T} T^{\ell}
\]
is a polynomial of degree $\le 3m-1$ in $T$. Now set $T=T(t)$ and use~\eqref{eq:PmT}:
\[
P_m(z,t)=\sum_{\ell\ge 0} p^{(m+\ell)}_m(z) e^{-\ell T(t)} T(t)^\ell
=e^{T(t)}\frac{ F_m(z,T(t))}{(1-T(t))^{2m-1}}
\]
to verify~\eqref{eq:Pmtree} as needed.
\end{proof}

Theorem~\ref{thm:Mtree} follows from Proposition~\ref{prop:treeprop}. Indeed, 
recall~\eqref{eq:MfromP}:
\[
\hM(z,\Lbb) = e^z P(z,e^z\Lbb,-\Lbb)\saf;
\]
since $P(z,t,u)=\sum_{m\ge 0} P_m(z,t) u^m$, Proposition~\ref{prop:treeprop} gives
\[
\hM(z,\Lbb) = e^z e^T\sum_{m\ge 0} \frac{ F_m(z,T)}{(1-T)^{2m-1}} (-\Lbb)^m
\]
where now $T=T(e^z\Lbb)$; by~\eqref{eq:Trel},
\[
e^z e^{T(e^z \Lbb)}= e^z e^{-z} \Lbb^{-1} T(e^z \Lbb)=\frac T\Lbb
\]
and~\eqref{eq:Mtree} follows.\qed


\section{Stirling matrices and the Grothendieck class of $\ocaM_{0,n}$}\label{sec:Stirma}

The material in this section is included mostly for aesthetic reasons.
We recast Theorem~\ref{thm:M0nbarGC} in terms of products involving 
the infinite matrices $\fs=(s(i,j))_{i,j\ge 1}$, resp., $\fS=(S(i,j))_{i,j\ge 1}$, defined by Stirling 
numbers of the first, resp., second kind:
\[
\fs = 
\begin{pmatrix}
1 & 0 & 0 & 0 & 0 & \cdots \\
-1 & 1 & 0 & 0 & 0 & \cdots \\
2 & -3 & 1 & 0 & 0 & \cdots \\
-6 & 11 & -6 & 1 & 0 & \cdots \\
24 & -50 & 35 & -10 & 1 & \cdots \\
\vdots & \vdots & \vdots & \vdots & \vdots & \ddots
\end{pmatrix},\quad
\fS = 
\begin{pmatrix}
1 & 0 & 0 & 0 & 0 & \cdots \\
1 & 1 & 0 & 0 & 0 & \cdots \\
1 & 3 & 1 & 0 & 0 & \cdots \\
1 & 7 & 6 & 1 & 0 & \cdots \\
1 & 15 & 25 & 10 & 1 & \cdots \\
\vdots & \vdots & \vdots & \vdots & \vdots & \ddots
\end{pmatrix}\saf.\quad
\]
We will also use the notation
\[
\one_\gamma:=\begin{pmatrix}
1 & 0 & 0 & 0 & \cdots \\
0 & \gamma & 0 & 0 & \cdots \\
0 & 0 & \gamma^2 & 0 & \cdots \\
0 & 0 & 0 & \gamma^3 & \cdots \\
\vdots & \vdots & \vdots & \vdots & \ddots
\end{pmatrix},\quad
\Sh_k:=\begin{pmatrix}
\overbrace{0 \quad \cdots \quad 0}^k & 1 & 0 & 0 & \cdots \\
0 \quad \cdots \quad 0 & 0 & 1 & 0 & \cdots \\
0 \quad \cdots \quad 0 & 0 & 0 & 1 & \cdots \\
\vdots \quad \vdots \quad \vdots & \vdots & \vdots & \vdots & \ddots
\end{pmatrix}\saf.
\]
The matrix $\Sh_k$ is obtained from the identity matrix by shifting entries to the right
so that the nonzero entries are placed along the NW-SE diagonal starting at $(1,k+1)$.
For an infinite matrix $A$, the `$k$-th trace' 
\[
\tr_k(A):= \tr(\Sh_k\cdot A)
\]
is the sum of the entries in the $k$-th subdiagonal, provided of course that this 
sum is defined, for example as a formal power series.

It is well-known that $\fs$ and $\fS$ are inverses of each other. We consider the 
following matrix, obtained as the product of the commutator of $\one_\Lbb$
and $\fs$ by $\one_\Lbb$:
\[
\one_\Lbb\cdot \fs \cdot \one_{\Lbb^{-1}}\cdot \fS \cdot \one_\Lbb
=\begin{pmatrix}
1 & 0 & 0 & 0 & \cdots \\[3pt]
1-\Lbb & \Lbb & 0 & 0 & \cdots \\[3pt]
1-3\Lbb+2\Lbb^2 & 3\Lbb-3\Lbb^2 & \Lbb^2 & 0& \cdots \\[3pt]
1-6\Lbb+11\Lbb^2-6\Lbb^3 & 7\Lbb-18\Lbb^2+11\Lbb^3 & 6\Lbb^2 - 6\Lbb^3 &
\Lbb^3 & \cdots \\[3pt]
\vdots & \vdots & \vdots & \vdots & \ddots
\end{pmatrix}\saf.
\]
With this notation, the following is a restatement of~Theorem~\ref{thm:M0nbarGC}.

\begin{theorem}\label{thm:Stma}
\[
[\ocaM_{0,n}] =(1-\Lbb)^{n-1} \cdot \tr_{n-2}\left(\one_\Lbb\cdot \fs\cdot 
\one_{\Lbb^{-1}}\cdot \fS\cdot \one_\Lbb\right)\saf.
\]
\end{theorem}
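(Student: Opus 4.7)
The plan is to show that Theorem~\ref{thm:Stma} is a direct index-level rewriting of Theorem~\ref{thm:M0nbarGC}; there is no new content, only bookkeeping. The main observation is that conjugation by the diagonal matrices $\one_\Lbb$ and $\one_{\Lbb^{-1}}$ only inserts the correct powers of $\Lbb$ along each antidiagonal of the product~$\fs\cdot\fS$, and the shift-trace $\tr_{n-2}$ extracts exactly the subdiagonal whose entries correspond to the summation index in~\eqref{eq:M0nbarGC}.

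First I would compute the general entry $A_{ij}$ of
\[
A:=\one_\Lbb\cdot \fs\cdot \one_{\Lbb^{-1}}\cdot \fS\cdot \one_\Lbb.
\]
Because three of the five factors are diagonal with $(\one_\Lbb)_{ii}=\Lbb^{i-1}$ and $(\one_{\Lbb^{-1}})_{ii}=\Lbb^{-(i-1)}$, a direct application of the matrix-product formula yields
\[
A_{ij}=\sum_{l\ge 1} \Lbb^{i-1}\,s(i,l)\,\Lbb^{-(l-1)}\,S(l,j)\,\Lbb^{j-1}
=\sum_{l\ge 1} s(i,l)\,S(l,j)\,\Lbb^{i+j-1-l}\saf.
\]
A quick sanity check using $s(2,1)=-1$, $s(2,2)=1$, $S(1,1)=S(2,1)=S(2,2)=1$ reproduces the entries $A_{21}=1-\Lbb$, $A_{22}=\Lbb$ shown in the matrix displayed before the theorem.

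Next I would unfold the shift-trace. From the definition of $\Sh_{n-2}$, we have $(\Sh_{n-2}\cdot A)_{ii}=A_{i+n-2,\saf i}$, so
\[
\tr_{n-2}(A)=\sum_{i\ge 1} A_{i+n-2,\saf i}
=\sum_{i\ge 1}\sum_{l\ge 1} s(i+n-2,l)\,S(l,i)\,\Lbb^{2i+n-3-l}\saf.
\]
The final step is the change of variables $i=k+1$ and $l=k+n-1-j$, under which $\Lbb^{2i+n-3-l}=\Lbb^{k+j}$, $s(i+n-2,l)=s(k+n-1,k+n-1-j)$, and $S(l,i)=S(k+n-1-j,k+1)$. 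With these substitutions the double sum becomes
\[
\sum_{k\ge 0}\sum_{j\ge 0} s(k+n-1,k+n-1-j)\,S(k+n-1-j,k+1)\,\Lbb^{k+j}\saf,
\]
which by Theorem~\ref{thm:M0nbarGC} equals $[\ocaM_{0,n}]/(1-\Lbb)^{n-1}$. Multiplying by $(1-\Lbb)^{n-1}$ gives the statement.

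No step presents a genuine obstacle; the only thing to watch is the convention for $\Sh_k$ (subdiagonal versus superdiagonal) and the lower limit $i\ge 1$, which corresponds to $k\ge 0$ in~\eqref{eq:M0nbarGC} and ensures the range of summation matches on the two sides.
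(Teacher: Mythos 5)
Your proposal is correct and follows essentially the same route as the paper: compute the general entry of the matrix product as $\sum_c s(a,c)S(c,b)\Lbb^{a+b-c-1}$, identify the $(n-2)$-th subdiagonal entries with the pairs $(a,b)=(k+n-1,k+1)$ via $i=k+1$, and match the resulting double sum against Theorem~\ref{thm:M0nbarGC} by reindexing $l=k+n-1-j$. The only difference is cosmetic (you run the computation from the trace side, the paper from the sum side), and your remark about watching the shift convention and the summation ranges is exactly the right sanity check.
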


The trace appearing in this statement is well-defined as a series: since the $i^\mathrm{th}$ 
column of the matrix is a multiple of $\Lbb^{i-1}$, only finitely many entries on every diagonal 
contribute to the coefficient of each power of $\Lbb$.

\begin{example}\label{ex:tr4}
The entries in the fourth subdiagonal are
\begin{alignat*}{13}
& 1 && -10\, &&\Lbb && +\,\,\, 35 &&\Lbb^2 && -\,\,\,\,\,\,50 &&\Lbb^3 
&& +\,\,\,\,\,\,24 &&\Lbb^4 &&  && && && \\
&  && \phantom{\,+\,}\,31&&\Lbb && -225\,&&\Lbb^2 && +\,\,\,595 &&\Lbb^3 && 
-\,\,\,675 &&\Lbb^4 && +\,\,\,\,\,\,274 &&\Lbb^5 &&  && \\
& && && && \phantom{\,+\,}\, 301 &&\Lbb^2 && -1890\, &&\Lbb^3 && 
+4375\, &&\Lbb^4 && -\,\,\,4410 &&\Lbb^5 && +\,\,\,1624 &&\Lbb^6 \\
& && && && && && \phantom{\,+\,}\,1701 &&\Lbb^3 && 
-9800\, &&\Lbb^4 && +20930\, &&\Lbb^5 && -19600\, &&\Lbb^6 +\,\,\, 6769\,\Lbb^7 \\
& && && && && && && && \phantom{\,+\,}\,6951 &&\Lbb^4 && 
-37800\, &&\Lbb^5 && +76440\, &&\Lbb^6 -68040\, \Lbb^7 + 22449\, \Lbb^8
\end{alignat*}
etc., adding up to
\[
\tr_4\left(\one_\Lbb\cdot \fs\cdot \one_{\Lbb^{-1}}\cdot \fS\cdot \one_\Lbb\right)
=1+21\Lbb+111\Lbb^2+356\Lbb^3+875\Lbb^4+\cdots
\]
and
\[
(1-\Lbb)^5\cdot 
\tr_4\left(\one_\Lbb\cdot \fs\cdot \one_{\Lbb^{-1}}\cdot \fS\cdot \one_\Lbb\right)
= 1+16\Lbb+16\Lbb^2+\Lbb^3=[\ocaM_{0,6}]
\]
as it should.
\qede\end{example}

\begin{proof}[Proof of Theorem~\ref{thm:Stma}]
The $(a,b)$-entry of the matrix
$\one_\Lbb \cdot \fs \cdot \one_{\Lbb^{-1}} \cdot \fS \cdot \one_\Lbb$
is
\[
(\one_\Lbb \cdot \fs \cdot \one_{\Lbb^{-1}} \cdot \fS \cdot \one_\Lbb)_{(a,b)}
=\sum_c \Lbb^{a-1} s(a,c) \Lbb^{-(c-1)} S(c,b) \Lbb^{b-1}
=\sum_c s(a,c) S(c,b) \Lbb^{a+b-c-1}\saf.
\]
For $a=k+n-1$, $b=k+1$, this gives
\[
\sum_{j\ge 0} s(k+n-1,k+n-1-j)\, S(k+n-1-j,k+1)\, \Lbb^{k+j}\saf.
\]
By Theorem~\ref{thm:M0nbarGC},
\begin{align*}
[\ocaM_{0,n}]&=(1-\Lbb)^{n-1}\cdot 
\sum_{k\ge 0} (\one_\Lbb \cdot \fs \cdot \one_{\Lbb^{-1}} 
\cdot \fS \cdot \one_\Lbb)_{(k+n-1,k+1)} \\
&=(1-\Lbb)^{n-1}\cdot 
\tr_{n-2}(\one_\Lbb \cdot \fs \cdot \one_{\Lbb^{-1}} \cdot \fS \cdot \one_\Lbb)
\end{align*}
as stated.
\end{proof}

The product by $(1-\Lbb)^{n-1}$ can also be accounted for in terms of this matrix
calculus, giving
\[
[\ocaM_{0,n}] = \tr_{n-2}\left((1-\Lbb)\cdot \one_{1-\Lbb} \cdot \one_\Lbb\cdot \fs\cdot 
\one_{\Lbb^{-1}} \cdot \fS \cdot \one_\Lbb \cdot \one_{{(1-\Lbb)}^{-1}}\right)
\saf.
\]


\section{About the Euler characteristic of $\ocaM_{0,n}$}\label{sec:OtEcoM}

We will end with a few comments on the Euler characteristic $\chi(\ocaM_{0,n})$ of
$\ocaM_{0,n}$. The generating function
\[
\hchi(z):=1+z+\sum_{n\ge 3} \chi(\ocaM_{0,n})\frac{z^{n-1}}{(n-1)!}
\]
is the specialization of $\hM$ at $\Lbb=1$, but cannot be recovered from the expressions
obtained in Theorem~\ref{thm:main} because of convergence issues. However, 
specializing the differential equation~\eqref{eq:Mdifeq} for $\hM$ at $\Lbb=1$ gives
\[
\frac{d\hchi}{dz}= \frac{\hchi}{2+z-\hchi}
\]
with initial condition $\hchi(0)=1$ (cf.~\cite[(0.10)]{MR1363064}, with different notation),
whose solution is
\begin{equation}\label{eq:hchiLam}
\hchi(z)=\frac{z+2}{-W_{-1}(-(z+2)e^{-2})}=
1+z+\frac{z^2}{2!}+2\,\frac{z^3}{3!} + 7\,\frac{z^4}{4!}+34\, \frac{z^5}{5!}
+213\, \frac{z^6}{6!}+\cdots\saf.
\end{equation}
Here, $W_{-1}$ is the other real branch of the Lambert W-function, cf.~\cite{MR1414285}.
\begin{center}
\includegraphics[scale=.6]{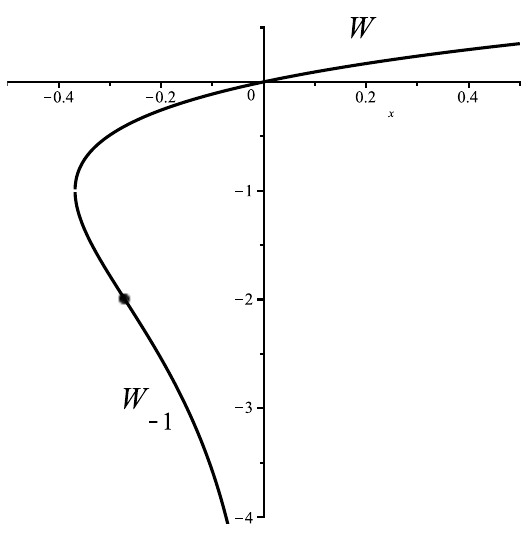}
\end{center}

We do not know how to extract an explicit expression for $\chi(\ocaM_{0,n})$ 
from~\eqref{eq:hchiLam}; but any of the formulas obtained for the Betti numbers in this
note yields such an expression. For example, Corollary~\ref{cor:bettisim} implies
\[
\chi(\ocaM_{0,n})
=\sum_{\ell=0}^{n-3}\sum_{j=0}^\ell \sum_{k=0}^{\ell-j} (-1)^{\ell-j-k}\binom{n-1}{\ell-j-k}
s(k+n-1,k+n-1-j)S(k+n-1-j,k+1)\saf.
\]
An alternative formulation may be obtained in terms of the shifted trace
\[
\tr_{n-2}\left(\one_\Lbb \cdot \fs\cdot \one_\Lbb^{-1}\cdot \fS\cdot \one_\Lbb\right)
=\sum_{j\ge 0} s(k+n-1,k+n-1-j)\, S(k+n-1-j,k+1)\, \Lbb^{k+j}
\]
appearing in~Theorem~\ref{thm:Stma}.

\begin{prop}\label{prop:Eucha}
\[
\tr_{n-2}\left(\one_\Lbb \cdot \fs\cdot \one_\Lbb^{-1}\cdot \fS\cdot \one_\Lbb\right)
=\sum_{k\ge 0}a_n(k) \Lbb^k
\]
where $a_n(k)$ is a polynomial with rational coefficients, degree $n-2$, and leading 
term $\frac{\chi(\ocaM_{0,n})}{(n-2)!}$.
\end{prop}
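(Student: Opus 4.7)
The plan is to derive the statement directly from Theorem~\ref{thm:Stma} by a routine partial-fraction expansion. Theorem~\ref{thm:Stma} gives
\[
\tr_{n-2}\left(\one_\Lbb\cdot \fs\cdot \one_\Lbb^{-1}\cdot \fS\cdot \one_\Lbb\right)=\frac{[\ocaM_{0,n}]}{(1-\Lbb)^{n-1}}\saf,
\]
so I would simply read off the coefficient of $\Lbb^k$ from the right-hand side. Writing $[\ocaM_{0,n}]=\sum_{i=0}^{n-3}b_i\Lbb^i$ with $b_i=\rk H^{2i}(\ocaM_{0,n})$, and using the standard expansion $(1-\Lbb)^{-(n-1)}=\sum_{j\ge 0}\binom{j+n-2}{n-2}\Lbb^j$, convolving yields
\[
\text{coefficient of }\Lbb^k = \sum_{i=0}^{\min(k,n-3)} b_i\binom{k-i+n-2}{n-2}\saf.
\]

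Next, I would interpret $\binom{m}{n-2}$ as the polynomial $m(m-1)\cdots(m-n+3)/(n-2)!$ of degree $n-2$ in $m$ with leading coefficient $1/(n-2)!$, and set
\[
a_n(k):=\sum_{i=0}^{n-3} b_i\binom{k-i+n-2}{n-2}\in \Qbb[k]\saf.
\]
As a sum of polynomials of degree $n-2$ in $k$ with leading coefficients $b_i/(n-2)!$, this is a polynomial of degree at most $n-2$ with leading coefficient $\bigl(\sum_i b_i\bigr)/(n-2)!=\chi(\ocaM_{0,n})/(n-2)!$, the latter equality holding because $\ocaM_{0,n}$ has only even cohomology. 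Since $b_0=1$ and all $b_i\ge 0$, we have $\chi(\ocaM_{0,n})>0$, so the leading term does not drop and the degree of $a_n(k)$ is exactly $n-2$, as claimed.

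The one point requiring a small verification is that $a_n(k)$ equals the actual coefficient of $\Lbb^k$ for \emph{every} $k\ge 0$, not merely for $k\ge n-3$. When $0\le k<n-3$ and $k<i\le n-3$, the shift $k-i+n-2$ is a non-negative integer strictly less than $n-2$, so one of the consecutive factors in the product $(k-i+n-2)(k-i+n-3)\cdots(k-i)$ vanishes; thus the extra terms $k<i\le n-3$ in the definition of $a_n(k)$ contribute nothing and $a_n(k)$ matches the convolution formula at every $k\ge 0$. I do not anticipate any real obstacle; the entire argument is bookkeeping on top of Theorem~\ref{thm:Stma}.
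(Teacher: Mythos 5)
Your argument is correct and amounts to the same reduction via Theorem~\ref{thm:Stma} to expanding $[\ocaM_{0,n}]/(1-\Lbb)^{n-1}$ as a power series in $\Lbb$, but you use a different basis for the numerator. You write $[\ocaM_{0,n}]=\sum_i b_i\Lbb^i$ with $b_i$ the Betti numbers, so that every summand $b_i\binom{k-i+n-2}{n-2}$ is a degree-$(n-2)$ polynomial in $k$ and the leading coefficient $\chi(\ocaM_{0,n})/(n-2)!$ emerges by summing all the $b_i$. The paper instead expands $[\ocaM_{0,n}]=\sum_\ell b_\ell(1-\Lbb)^\ell$ in powers of $(1-\Lbb)$, so the quotient becomes $\sum_\ell b_\ell(1-\Lbb)^{-(n-\ell-1)}$; then only the $\ell=0$ summand contributes degree $n-2$, and the leading coefficient $b_0/(n-2)!$ is read off from a single term, with $b_0=[\ocaM_{0,n}]|_{\Lbb=1}=\chi(\ocaM_{0,n})$ immediate. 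The two computations are equivalent; the paper's choice is marginally slicker because the Euler characteristic surfaces from one evaluation rather than a sum, whereas your version is more explicit in tying $a_n(k)$ directly to the Betti numbers. Your side remark about the spurious terms $i>k$ vanishing because $\binom{m}{n-2}=0$ for integers $0\le m<n-2$ is correct and is the right way to see that the polynomial formula agrees with the convolution for all $k\ge 0$; you also spell out why the degree is exactly $n-2$ (since $\chi(\ocaM_{0,n})>0$), a point the paper leaves implicit.
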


\begin{proof}
The class $[\ocaM_{0,n}]$ is a polynomial of degree $n-3$ in $\Lbb$, therefore there exist
integers $b_0,\dots, b_{n-3}$ such that
\[
[\ocaM_{0,n}]=\sum_{\ell=0}^{n-3} b_\ell(1-\Lbb)^\ell\saf,
\]
and with this notation $b_0=[\ocaM_{0,n}]|_{\Lbb=1}=\chi(\ocaM_{0,n})$.
By Theorem~\ref{thm:Stma},
\begin{align*}
\tr_{n-2}\left(\one_\Lbb \cdot \fs\cdot \one_\Lbb^{-1}\cdot \fS\cdot \one_\Lbb\right)
&=\frac{[\ocaM_{0,n}]}{(1-\Lbb)^{n-1}} 
=\sum_{\ell=0}^{n-3} \frac{b_\ell}{(1-\Lbb)^{n-\ell-1}} \\
&=\sum_{k\ge 0} \left(\sum_{\ell=0}^{n-3} b_\ell \binom{k+n-\ell-2}{n-\ell-2}\right) \Lbb^k\saf.
\end{align*}
It follows that the coefficient $a_n(k)$ of $\Lbb^k$ is a polynomial as stated, and
\[
a_n(k) = b_0\,\frac{k^{n-2}}{(n-2)!} + \text{lower order terms}\saf,
\]
concluding the proof.
\end{proof}

\begin{example}
In Example~\ref{ex:tr4} we noted
\[
\tr_4\left(\one_\Lbb\cdot \fs\cdot \one_{\Lbb^{-1}}\cdot \fS\cdot \one_\Lbb\right)
=1+21\Lbb+111\Lbb^2+356\Lbb^3+875\Lbb^4+\cdots
\]
Tracing the argument in the proof of Proposition~\ref{prop:Eucha} shows that these
coefficients are values of the polynomial
\[
a_6(k) = \frac{17}{12}\,k^4+\frac{17}3\,k^3+\frac{97}{12}\,k^2+\frac{29}6\,k+1\saf,
\]
and $\ocaM_{0,6}=34$.
\qede\end{example}


\appendix


\section{Combinatorial identities}\label{sec:app}

Here are collected the combinatorial statements used in this paper; 
proofs are included for completeness.

\begin{lemma}\label{lem:binid1}
For all integers $k,w\ge 1$,
\[
\sum_{\ell+m=k-1}
\binom{w(\ell+1)-1}\ell \cdot
\binom{w(m+1)-2}m 
\cdot \frac 1{m+1}
=\binom{w(k+1)-2}{k-1}\saf.
\]
\end{lemma}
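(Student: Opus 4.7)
I plan to prove the identity by identifying both sides as coefficients in a single generating-function identity. Introduce the formal power series $x=x(y)$ defined by the Fuss--Catalan-type functional equation $x=1+yx^{w}$, equivalently $y=(x-1)x^{-w}$. Lagrange inversion---precisely the content of Lemma~\ref{lem:binid2} below in the appendix---supplies the expansion
\[
x^{r}=\sum_{\ell\ge 0}\frac{r}{r+\ell w}\binom{r+\ell w}{\ell}y^{\ell}
\]
for every integer $r$.

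The first step is to recognize each side of the claimed identity as a coefficient in an explicit rational function of $x$. A direct factorial manipulation shows
\[
\frac{1}{m+1}\binom{w(m+1)-2}{m}=\frac{w-1}{w(m+1)-1}\binom{w(m+1)-1}{m},
\]
so the case $r=w-1$ of the Lagrange expansion gives $\sum_{m\ge 0}\frac{1}{m+1}\binom{w(m+1)-2}{m}y^{m}=x^{w-1}$. For the remaining two sums I would exploit the differential relation $dx/dy=x^{w+1}/(w-(w-1)x)$, obtained by differentiating $x=1+yx^{w}$ and clearing $yx^{w-1}=(x-1)/x$. This permits extracting $\sum_{\ell}\binom{w(\ell+1)-1}{\ell}y^{\ell}$ and $\sum_{n}\binom{w(n+2)-2}{n}y^{n}$ from the Lagrange expansion by combining $[y^\ell]x^{r}$ with $[y^\ell](y\, dx^r/dy)$. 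Using $yx^{w}=x-1$ to collapse the result yields the closed forms
\[
\sum_{\ell\ge 0}\binom{w(\ell+1)-1}{\ell}y^{\ell}=\frac{x^{w}}{w-(w-1)x},\qquad
\sum_{n\ge 0}\binom{w(n+2)-2}{n}y^{n}=\frac{x^{2w-1}}{w-(w-1)x}.
\]

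At this point the identity becomes transparent: the product of the first two generating functions equals $x^{w-1}\cdot x^{w}/(w-(w-1)x)=x^{2w-1}/(w-(w-1)x)$, which is the third, and extracting the coefficient of $y^{k-1}$ on both sides gives exactly the claimed binomial identity. The main obstacle is the derivation of the closed form for $\sum\binom{w(\ell+1)-1}{\ell}y^{\ell}$: the differential relation and the functional equation must be combined carefully so that numerator and denominator simplify to a single rational expression in $x$; after that the proof is an accounting of coefficients. One could equivalently invoke the Hagen--Rothe convolution cited by the authors in the main text, but the Lagrange-inversion route is self-contained within the appendix and uses exactly the tool already required for Lemma~\ref{lem:binid2}.
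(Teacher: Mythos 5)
Your argument is correct, and it reaches the same three closed forms the paper derives but by a noticeably different route. The paper invokes the ``derivative'' form of the Lagrange series, $\frac{f(x)}{1-y\varphi'(x)}=\sum_{r}\frac{y^r}{r!}\bigl[\frac{d^r}{dx^r}(f\varphi^r)\bigr]_{x=0}$, which yields $\sum_{r}\binom{\alpha+\beta r}{r}y^r=\frac{(x+1)^{\alpha+1}}{x+1-\beta x}$ directly; this handles $A(x)$ and the convolution target in one step each, but the sum with the $\frac{1}{m+1}$ factor requires differentiating $By$, applying the identity, and then integrating back to get $B(x)=(x+1)^{w-1}$. You instead rely exclusively on the inversion formula that already appears as Lemma~\ref{lem:binid2}, so the only Lagrange machinery you need is the one the appendix must contain anyway. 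The payoff is that the $B$-sum becomes the cleanest of the three: the factorial identity $\frac{1}{m+1}\binom{w(m+1)-2}{m}=\frac{w-1}{w(m+1)-1}\binom{w(m+1)-1}{m}$ turns it into a verbatim application of Lemma~\ref{lem:binid2} with $\alpha=w-1$, $\beta=w$, and no integration is needed. The cost is that for $A(x)$ and the convolution sum you must massage $\frac{\alpha}{\alpha+\ell\beta}\binom{\alpha+\ell\beta}{\ell}$ into $\binom{\alpha-1+\ell\beta}{\ell}$, which forces the $x^{r}+\frac{w-1}{r}\,y\,\frac{d x^{r}}{dy}$ combination and the computation of $dx/dy=\frac{x^{w+1}}{w-(w-1)x}$; this does work out (using $yx^{w}=x-1$ to collapse the numerator), but it trades the paper's single integration for two differentiations plus some algebra. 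Both paths are of comparable length; yours has the aesthetic advantage of using only one Lagrange statement, and you are right that the Hagen--Rothe convolution would give a third, even shorter route if one were willing to cite it.
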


\begin{proof}
We will use one form of the Lagrange series identity, \cite[\S4.5]{MR231725}:
\[
\frac{f(x)}{1-y \varphi'(x)}=\sum_{r\ge 0} \frac{y^ r}{ r!} \left[ \frac {d^ r}{dx^ r}
(f(x)\varphi^ r(x))\right]_{x=0}\saf,
\]
where $y=\frac x{\varphi(x)}$. Applying this identity with $f(x)=(x+1)^\alpha$,
$\varphi(x)=(x+1)^\beta$ gives
\begin{equation}\label{eq:Lagse}
\frac{(x+1)^{\alpha+1}}{x+1-\beta x}=\sum_{ r\ge 0} \binom{\alpha+\beta  r} r y^ r\saf.
\end{equation}
Setting $\alpha=w-1,\beta=w$ in~\eqref{eq:Lagse} gives
\[
A(x):=\sum_{\ell\ge 0} \binom{w(\ell+1)-1}\ell y^\ell = \frac{(x+1)^w}{x+1-w x}\saf.
\]
Setting
\[
B(x):=\sum_{m\ge 0} \binom{w(m+1)-2}m \cdot \frac {y^m}{m+1}
\]
and applying~\eqref{eq:Lagse} with $\alpha=w-2, \beta=w$ gives
\[
\frac{d}{dy}(By)=\sum_{m\ge 0} \binom{w(m+1)-2}m y^m= \frac{(x+1)^{w-1}}{x+1-w x}\saf,
\]
from which
\[
By=\int \frac{(x+1)^{w-1}}{x+1-w x} dy 
=\int \frac{(x+1)^{w-1}}{x+1-w x}\cdot \frac{dy}{dx} dx
=\int \frac 1{(x+1)^2} dx =-\frac 1{x+1} +C\saf.
\]
Setting $x=0$ determines $C=1$, and we get
\[
B(x)=(x+1)^{w-1}\saf.
\]
Another application of~\eqref{eq:Lagse}, with $\alpha=2w-2,\beta=w$, gives
\begin{align*}
\sum_{\ell\ge 0}
&\binom{w(\ell+1)-1}\ell y^\ell\cdot
\sum_{m\ge 0}
\binom{w(m+1)-2}m \cdot \frac {y^m}{m+1}\\
& = A(x)B(x)=\frac{(x+1)^{2w-1}}{1-x(w-1)} \\
&=\sum_{ r\ge 0} \binom{w( r+2)-2}{ r} y^ r\saf.
\end{align*}
Extracting the coefficient of $y^{k-1}$ verifies the stated identity.
\end{proof}

\begin{lemma}\label{lem:binid2}
For all positive integers $\alpha, \beta$, we have
\[
x^\alpha = \sum_{\ell\ge 0} \frac \alpha {\alpha+\ell\beta}\binom{\alpha+ \ell\beta}\ell
y^\ell\saf,
\]
where $y=(x-1)x^{-\beta}$.
\end{lemma}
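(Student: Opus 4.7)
The plan is to derive this identity from the same Lagrange series formula \eqref{eq:Lagse} already used in the proof of Lemma~\ref{lem:binid1}, after a linear change of variable. Setting $w := x-1$, so that $x = 1+w$ and $y = w(1+w)^{-\beta}$, the identity we must prove becomes
\[
(1+w)^\alpha = \sum_{\ell\ge 0}\frac{\alpha}{\alpha+\ell\beta}\binom{\alpha+\ell\beta}{\ell}y^\ell,
\]
which is now in the standard form $y = w/\varphi(w)$ with $\varphi(w)=(1+w)^\beta$ used in \eqref{eq:Lagse}.

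The strategy is to differentiate with respect to $y$, apply \eqref{eq:Lagse}, and then integrate. Using $\frac{dy}{dw} = \frac{1+w-\beta w}{(1+w)^{\beta+1}}$, the chain rule gives
\[
\frac{d}{dy}(1+w)^\alpha \;=\; \alpha\,(1+w)^{\alpha-1}\,\frac{dw}{dy}\;=\;\alpha\,\frac{(1+w)^{\alpha+\beta}}{1+w-\beta w}.
\]
Applying \eqref{eq:Lagse} with the parameters $(A,B)$ replaced by $(\alpha+\beta-1,\beta)$ turns the right-hand side into
\[
\alpha\sum_{r\ge 0}\binom{\alpha+\beta(r+1)-1}{r}y^r.
\]

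To finish, I would integrate term by term with respect to $y$, shift the summation index by setting $\ell = r+1$, and apply the elementary identity $\binom{n-1}{k-1}=\frac{k}{n}\binom{n}{k}$ with $n=\alpha+\ell\beta$ and $k=\ell$ to rewrite
\[
\frac{1}{\ell}\binom{\alpha+\ell\beta-1}{\ell-1}=\frac{1}{\alpha+\ell\beta}\binom{\alpha+\ell\beta}{\ell}.
\]
This yields $(1+w)^\alpha = C + \sum_{\ell\ge 1}\frac{\alpha}{\alpha+\ell\beta}\binom{\alpha+\ell\beta}{\ell}y^\ell$. Setting $w=0$ (hence $y=0$) determines $C=1$, which matches the $\ell=0$ term $\frac{\alpha}{\alpha}\binom{\alpha}{0}=1$ of the proposed series.

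There is no real obstacle: the lemma is essentially a textbook consequence of Lagrange inversion once the substitution $w = x-1$ is made, and the only bookkeeping required is the binomial-coefficient manipulation above and the trivial determination of the constant of integration. The positivity of $\alpha$ and $\beta$ is used implicitly to ensure that $y$ is a well-defined formal power series in $w$ with no constant term and nonzero linear term, so that Lagrange inversion applies.
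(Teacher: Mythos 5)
Your proof is correct, and it takes a route that is superficially different but closely related to the paper's. The paper invokes what it calls a ``second form'' of the Lagrange series identity (the primitive form, giving $f$ directly), applies it at once with $f(x)=(x+1)^\alpha$, $\varphi(x)=(x+1)^\beta$, and then substitutes $x\mapsto x-1$. You instead reuse the ``first form'' \eqref{eq:Lagse} already deployed in the proof of Lemma~\ref{lem:binid1}, apply it to $\frac{d}{dy}(1+w)^\alpha$, and then integrate in $y$, pinning the constant at $y=0$ and cleaning up with $\binom{n-1}{k-1}=\frac{k}{n}\binom{n}{k}$. This is exactly the differentiate-and-integrate device the paper already uses inside the proof of Lemma~\ref{lem:binid1} to pass from $A$ to $B$; so your argument has the mild advantage of needing only one version of Lagrange inversion for both appendix lemmas, at the cost of an extra integration step and index shift that the paper's direct form avoids. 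One small nit: the positivity of $\alpha$ and $\beta$ is not really what makes Lagrange inversion applicable here --- $y=w(1+w)^{-\beta}$ is a formal power series in $w$ with zero constant term and unit linear coefficient for any $\beta$ --- positivity is merely the regime in which the lemma is stated and later applied.
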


\begin{proof}
We will use a second form of the Lagrange series identity, \cite[\S4.5]{MR231725}:
\[
f(x)=f(0)+\sum_{\ell=1}^\infty \frac{y^\ell}{\ell!} 
\left(\frac {d^{\ell-1}}{dx^{\ell-1}}(f'(x)\varphi^\ell(x))\right)|_{x=0}
\]
where $y=\frac x{\varphi(x)}$. Apply this identity with $f(x)=(x+1)^\alpha$, 
$\varphi(x)=(x+1)^\beta$, $y=x(x+1)^{-\beta}$. We have
\begin{align*}
\frac 1{\ell!} \frac {d^{\ell-1}}{dx^{\ell-1}}(\alpha (x+1)^{\alpha+\ell\beta-1})|_{x=0}
&=\frac \alpha{\ell!} (\alpha+\ell\beta-1)\cdots (\alpha_\ell\beta-(\ell-1)) 
(x+1)^{\alpha+\ell\beta-\ell}|_{x=0} \\
&=\frac \alpha{\alpha+\ell\beta}\binom{\alpha+\ell\beta}{\ell}
\end{align*}
and hence
\[
(x+1)^\alpha=\sum_{\ell\ge 0} \frac \alpha {\alpha +\ell\beta} 
\binom{\alpha+\ell\beta}{\ell}(x(x+1)^{-\beta})^\ell\saf.
\]
Substituting $x-1$ for $x$ gives the statement.
\end{proof}

\begin{lemma}\label{lem:St2}
For $N,r>0$, let $S(N,r)$ denote the Stirling number of the second kind (cf.~\eqref{eq:Stir2def}).
Then for all integers $a\ge 2$ there exists a polynomial $\sigma_a(x)$ of degree $a-2$ such
that
\[
\sum_{N\ge 0} S(N+a,N+1) x^N = \frac{\sigma_a(x)}{(1-x)^{2a-1}}\saf.
\]
For $a=1$, the same holds with $\sigma_a(x)=1$.
\end{lemma}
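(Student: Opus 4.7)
The plan is to work with the cleaner auxiliary generating function $G_k(t) := \sum_{n\ge 0} S(n+k,n)\,t^n$, deduce a tidy recurrence for it, and then recover the claim about $A_a(t) := \sum_{N\ge 0} S(N+a,N+1)\,t^N$. Reindexing by $n = N+1$ and using that $S(a-1,0)=0$ for $a\ge 2$, I observe that $A_a(t) = G_{a-1}(t)/t$ for $a\ge 2$. The $a=1$ case is trivial: $A_1(t)=\sum_{N\ge 0}t^N=1/(1-t)$, and $\sigma_1=1$ as stated. Thus it suffices to prove that for all $k\ge 0$, $G_k(t)=P_k(t)/(1-t)^{2k+1}$ for some $P_k\in \Qbb[t]$ of degree at most $k$, with $P_k$ divisible by $t$ for $k\ge 1$; setting $\sigma_a(t) := P_{a-1}(t)/t$ then gives a polynomial of degree at most $a-2$, as required.

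Next I would apply the Stirling recurrence $S(m,r)=r\,S(m-1,r)+S(m-1,r-1)$ with $m=n+k$, $r=n$ to the coefficients of $G_k$. The contribution of $n\,S(n+k-1,n)\,t^n$ sums to $t\,G_{k-1}'(t)$, and a reindex $m=n-1$ shows that the contribution of $S(n+k-1,n-1)\,t^n$ sums to $t\,G_k(t)$ (the $n=0$ term vanishing since $S(k-1,-1)=0$). This produces the first-order recurrence
\[
G_k(t)=\frac{t\,G_{k-1}'(t)}{1-t}.
\]

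Finally, I would induct on $k$, starting from $G_0(t)=1/(1-t)$, i.e., $P_0=1$. Differentiating the inductive hypothesis $G_{k-1}=P_{k-1}/(1-t)^{2k-1}$ gives
\[
G_{k-1}'(t)=\frac{(1-t)\,P_{k-1}'(t)+(2k-1)\,P_{k-1}(t)}{(1-t)^{2k}},
\]
so the recurrence yields
\[
P_k(t)=t\bigl[(1-t)\,P_{k-1}'(t)+(2k-1)\,P_{k-1}(t)\bigr].
\]
The bracket has degree at most $k-1$, hence $P_k$ has degree at most $k$ and is manifestly divisible by $t$ for all $k\ge 1$. This closes the induction and completes the proof.

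There is no real obstacle: the essential idea is to trade the two-parameter family $S(N+a,N+1)$ for the cleaner diagonal $S(n+k,n)$, for which the Stirling recurrence translates into a first-order differential-type recurrence that trivially propagates both the denominator power $(1-t)^{2k+1}$ and the correct degree bound on the numerator. All remaining work is routine bookkeeping.
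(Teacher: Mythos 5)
Your proof follows essentially the same approach as the paper's: both apply the Stirling recurrence $S(m,r)=r\,S(m-1,r)+S(m-1,r-1)$ to derive a first-order differential recurrence for the generating function and then induct on the shift parameter. (Your $G_k(t)$ is exactly $t\cdot\St_{k+1}(t)$ in the paper's notation, so your recurrence $G_k=\frac{t\,G_{k-1}'}{1-t}$ is the paper's $\St_a=\frac{1}{1-x}\frac{d}{dx}(x\St_{a-1})$ after this renormalization.) One loose end: the lemma asserts $\sigma_a$ has degree \emph{exactly} $a-2$, while you only establish the upper bound $\deg\sigma_a\le a-2$. This is easy to close from your recurrence $P_k(t)=t\bigl[(1-t)P_{k-1}'(t)+(2k-1)P_{k-1}(t)\bigr]$: if $P_{k-1}$ has degree $k-1$ with leading coefficient $c_{k-1}$, then the coefficient of $t^k$ in $P_k$ is $\bigl(-(k-1)+(2k-1)\bigr)c_{k-1}=k\,c_{k-1}\ne 0$, so by induction $c_k=k!$ and the degree is exact (matching the paper's remark that the leading coefficient of $\sigma_a$ is $(a-1)!$).
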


\begin{remark}
The coefficients of the polynomials $\sigma_a$ are the numbers denoted $B_{k,i}$ 
in~\cite{MR0462961}, where it is proved (\cite[Theorem~2.1]{MR0462961}) that they 
equal the number of ``Stirling permutations of the multiset $\{1,1,2,2,\dots, k,k\}$ with 
exactly $i$ descents.'' In particular, they are positive integers.
\qede\end{remark}

\begin{proof}
Since $S(N+1,N+1)=1$ and $S(N+2,N+1)=\binom{N+2}2$ for all $N\ge 0$, the statement is 
immediate for $a=1$ and $a=2$. For $a>2$, let
\[
\St_a(x):= \sum_{N\ge 0} S(N+a,N+1) x^N = \frac{\sigma_a(x)}{(1-x)^{2a-1}}
\]
and recall the basic recursive identity satisfied by Stirling numbers of the second kind:
\[
S(N+a,N+1)=S(N+a-1,N)+(N+1) S(N+a-1,N+1)\saf.
\]
This implies the relation
\[
\St_a(x) = \frac 1{1-x}\cdot \frac d{dx}(x \St_{a-1}(x))
\]
from which the statement follows easily. In fact, this identity implies the recursion
\[
\sigma_a(x)=(1-x)(\sigma_{a-1}(x)+x \sigma'_{a-1}(x))+(2a-3)x\sigma_{a-1}(x)\saf,
\]
verifying that $\sigma_a(x)$ is a polynomial of degree $a-2$, with leading coefficient $(a-1)!$,
if $\sigma_{a-1}(x)$ is a polynomial of degree $a-3$ with leading coefficient $(a-2)!$.
\end{proof}


\newcommand{\etalchar}[1]{$^{#1}$}

\end{document}